\documentclass{amsart}
 
\usepackage{amsmath}
\usepackage{amssymb}
\usepackage{amsfonts}

\usepackage{alltt}
\usepackage{amsthm}
\usepackage{bm}
\usepackage{mathrsfs}
\usepackage{graphicx}
\usepackage{color}
\usepackage{mathtools}
\usepackage[T1]{fontenc}
\usepackage[utf8]{inputenc}
\usepackage[colorlinks]{hyperref}

\usepackage{graphicx} 
\usepackage{pgfplots}

\newcommand{\C}{\mathbb C}
\newcommand{\R}{\mathbb R}
\newcommand{\N}{\mathbb N}
\newcommand{\Z}{\mathbb Z}

\newcommand{\de}{\, d}

\newcommand{\re}{\operatorname{Re}}
\newcommand{\im}{\operatorname{Im}}
\newcommand{\ve}{\varepsilon}
\newcommand{\hdot}{\bm{\ldotp}}

\newcommand{\myconj}{\ast}

\newcommand{\musw}{\mu_k^{\mathrm{sl}}}
\newcommand{\mudw}{\mu_k^{\mathrm{dl}}}
\newcommand{\sech}{\operatorname{sech}}

\numberwithin{equation}{section}

\newtheorem{thm}{Theorem}[section]
\newtheorem{cor}[thm]{Corollary}
\newtheorem{lem}[thm]{Lemma}
\newtheorem{prop}[thm]{Proposition}

\theoremstyle{definition}
\newtheorem{dfn}[thm]{Definition}

\theoremstyle{remark}
\newtheorem{rem}[thm]{Remark}
\newtheorem*{rmk}{Remark}


\makeatletter
\let\save@mathaccent\mathaccent
\newcommand*\if@single[3]{%
  \setbox0\hbox{${\mathaccent"0362{#1}}^H$}%
  \setbox2\hbox{${\mathaccent"0362{\kern0pt#1}}^H$}%
  \ifdim\ht0=\ht2 #3\else #2\fi
  }
\newcommand*\rel@kern[1]{\kern#1\dimexpr\macc@kerna}
\newcommand*\widebar[1]{\@ifnextchar^{{\wide@bar{#1}{0}}}{\wide@bar{#1}{1}}}
\newcommand*\wide@bar[2]{\if@single{#1}{\wide@bar@{#1}{#2}{1}}{\wide@bar@{#1}{#2}{2}}}
\newcommand*\wide@bar@[3]{%
  \begingroup
  \def\mathaccent##1##2{%
    \let\mathaccent\save@mathaccent
    \if#32 \let\macc@nucleus\first@char \fi
    \setbox\z@\hbox{$\macc@style{\macc@nucleus}_{}$}%
    \setbox\tw@\hbox{$\macc@style{\macc@nucleus}{}_{}$}%
    \dimen@\wd\tw@
    \advance\dimen@-\wd\z@
    \divide\dimen@ 3
    \@tempdima\wd\tw@
    \advance\@tempdima-\scriptspace
    \divide\@tempdima 10
    \advance\dimen@-\@tempdima
    \ifdim\dimen@>\z@ \dimen@0pt\fi
    \rel@kern{0.6}\kern-\dimen@
    \if#31
      \overline{\rel@kern{-0.6}\kern\dimen@\macc@nucleus\rel@kern{0.4}\kern\dimen@}%
      \advance\dimen@0.4\dimexpr\macc@kerna
      \let\final@kern#2%
      \ifdim\dimen@<\z@ \let\final@kern1\fi
      \if\final@kern1 \kern-\dimen@\fi
    \else
      \overline{\rel@kern{-0.6}\kern\dimen@#1}%
    \fi
  }%
  \macc@depth\@ne
  \let\math@bgroup\@empty \let\math@egroup\macc@set@skewchar
  \mathsurround\z@ \frozen@everymath{\mathgroup\macc@group\relax}%
  \macc@set@skewchar\relax
  \let\mathaccentV\macc@nested@a
  \if#31
    \macc@nested@a\relax111{#1}%
  \else
    \def\gobble@till@marker##1\endmarker{}%
    \futurelet\first@char\gobble@till@marker#1\endmarker
    \ifcat\noexpand\first@char A\else
      \def\first@char{}%
    \fi
    \macc@nested@a\relax111{\first@char}%
  \fi
  \endgroup
}
\makeatother


\makeatletter
\def\@setauthors{%
  \begingroup
  \def\thanks{\protect\thanks@warning}%
  \trivlist
  \centering\large \@topsep30\p@\relax
  \advance\@topsep by -\baselineskip
  \item\relax
  \author@andify\authors
  \def\\{\protect\linebreak}%
  \authors%
  \ifx\@empty\contribs
  \else
    ,\penalty-3 \space \@setcontribs
    \@closetoccontribs
  \fi
  \endtrivlist
  \endgroup
}
\def\@settitle{\begin{center}%
  \baselineskip14\p@\relax
    \normalfont\LARGE
  \@title
  \end{center}%
}
\makeatother

\pgfplotsset{compat=newest}

\begin{document}

\title[Complex eigenvalue splitting for the Dirac operator]
{Complex eigenvalue splitting for the Dirac operator}

\date{\today}

\author{Koki Hirota}
\address[Koki Hirota]{Department of Mathematical Sciences, Ritsumeikan University, Ku\-sa\-tsu, 525-8577, Japan}
\email{hirota@fc.ritsumei.ac.jp}

\author{Jens Wittsten}
\address[Jens Wittsten]{Centre for Mathematical Sciences, Lund University, Box 118, SE-221 00 Lund, Sweden, and Department of Engineering, University of Bor{\aa}s, SE-501 90 Bor{\aa}s, Sweden}
\email{jens.wittsten@math.lu.se}

\subjclass[2010]{34L40 (primary), 81Q20 (secondary)}

\keywords{Dirac operator, Zakharov-Shabat system, eigenvalue splitting, quantization condition, exact WKB method}

\begin{abstract}
We analyze the eigenvalue problem for the semiclassical Dirac (or Zakharov-Shabat) operator on the real line with general analytic potential. We provide Bohr-Sommerfeld quantization conditions near energy levels where the potential exhibits the characteristics of a single or double bump function. From these conditions we infer that near energy levels where the potential (or rather its square) looks like a single bump function, all eigenvalues are purely imaginary. For even or odd potentials we infer that near energy levels where the square of the potential looks like a double bump function, eigenvalues split in pairs exponentially close to reference points on the imaginary axis. For even potentials this splitting is vertical and for odd potentials it is horizontal, meaning that all such eigenvalues are purely imaginary when the potential is even, and no such eigenvalue is purely imaginary when the potential is odd.
\end{abstract}

\maketitle

\section{Introduction}
Consider the eigenvalue problem
\begin{equation}
\label{evp}
P(h)u=\lambda u
\end{equation}
on the real line for the Dirac (or Zakharov-Shabat) operator given by the $2\times 2$ non-selfadjoint system 
$$
P(h)=\begin{pmatrix} -hD_x & iV(x)\\ iV(x) & hD_x\end{pmatrix},\quad D_x=-i\partial/\partial x,
$$
where $u$ is a column vector, $h$ a small positive parameter, $\lambda$ a spectral parameter, and $V$ a real-valued analytic function on $\R$. Solving \eqref{evp} constitutes an essential step in the treatment of many important nonlinear evolution equations by means of the inverse scattering transform, including the focusing nonlinear Schr{\"o}dinger (NLS) equation, the sine-Gordon equation and the modified Korteweg-de~Vries equation \cite{drazin1989solitons}. Among the numerous applications of these equations are nonlinear wave propagation in plasma physics, nonlinear fiber optics, hydrodynamics and astrophysics.

The operator $P(h)$ is the massless Dirac operator on the real line with anti-selfadjoint potential. In the selfadjoint case, resonances have been studied in various settings by many authors, see for example \cite{iantchenko2014resonances} for a historical account of the massless case, and \cite{iantchenko2014resonancesmassive} for the massive case.
Certain types of massless Dirac operators have also been shown to be effective models for twistronics, such as in Twisted Bilayer Graphene (TBG), see e.g.~\cite{tarnopolsky2019origin}. In fact, the twist-angles producing superconductive properties in TBG can be characterized in terms of the spectrum of the related Dirac operator \cite{becker2020mathematics,becker2020spectral}, which in this case is highly non-normal.

Here, we shall focus on the connection between $P(h)$ and the NLS equation, which is one of the most fundamental nonlinear evolution equations in physics.
In the focusing semiclassical case one is interested in the asymptotic behavior of $\psi=\psi(t,x;h)$ in the semiclassical limit $h\to0$, where $\psi$ is the solution to the initial value problem
\begin{equation}\label{nls}
ih\frac{\partial \psi}{\partial t}+\frac{h^2}{2}\frac{\partial^2 \psi}{\partial x^2}+\lvert\psi\rvert^2\psi=0,\quad
\psi(0,x)=V(x), 
\end{equation}
and $V$ is a real-valued function independent of $h$.
In the inverse scattering method the initial data is substituted by the {\it soliton ensembles} data, defined by replacing the scattering data for $\psi(0,x)=V(x)$ with their formal WKB approximation. The focusing NLS equation \eqref{nls} is then solved with this new set of {\it $h$-dependent} initial data, and the asymptotic behavior of the obtained approximate solution is analyzed in the limit $h\to0$. 
However, it is a priori not clear how such an $h$-dependent approximation of initial data affects the behavior of $\psi$ as $h\to0$, or if it is even justified at all.  For this a rigorous semiclassical description of the spectrum of the corresponding Dirac operator $P(h)$ is required, which has so far only been provided in a few cases such as for periodic potentials by Fujii{\'e} and Wittsten \cite{fujiie2018quantization}, and for bell-shaped, even potentials by Fujii{\'e} and Kamvissis \cite{fujiie2019semiclassical}. Both of the mentioned articles employ the {\it exact WKB method} which we describe in Section \ref{section:preliminaries} below. For an in-depth discussion on the necessity (as well as effects) of a precise description of the semiclassical spectral data of $P(h)$ in the context of inverse scattering and the focusing NLS equation we refer to the second paper mentioned above.

The interest in the spectrum of the operator $P(h)$ and its relatives dates back to Zakharov and Shabat \cite{shabat1972exact}. Since $P(h)$ is not selfadjoint the eigenvalues are not expected to be real in general. These complex eigenvalues directly determine the energy and speed of the soliton (solitary wave) solutions of \eqref{nls}; the energy, or amplitude, given by the imaginary part and the speed by the real part of the eigenvalue. Early on it was realized that there are examples of potentials $V(x)$ for which all the complex eigenvalues are in fact purely imaginary, thus giving rise to soliton pulses with zero velocity in the considered frame of reference. (In the defocusing case, obtained from \eqref{nls} by changing sign of the nonlinear term, no such solutions exist in general. In fact, the corresponding Dirac operator is then selfadjoint, and the first author has shown that  it has real spectrum even under small non-selfadjoint perturbations \cite{Hirota2017Real}.) 
In 1974, Satsuma and Yajima \cite{Satsuma1974nonlinear} studied $P(h)$ with $V(x) = V_0\sech(x)$, $V_0 > 0$, and solved \eqref{evp} by reducing it to the hypergeometric equation. They found that if $h = h_{N} = V_0/N$, there are exactly $N$ purely imaginary eigenvalues $\lambda_{k}$ given by
\begin{align*}
\lambda_{k} = ih_{N}\bigg(N-k-\frac{1}{2}\bigg), \quad k = 0,\ldots, N-1.
\end{align*}
For many years thereafter, the literature was filled with erroneous statements about eigenvalues being confined to the imaginary axis whenever the potential $V$ is real-valued and symmetric. In the nonsemiclassical regime ($h=1$) the question was given rigorous consideration in a series of papers by Klaus and Shaw \cite{klaus2001influence,klaus2002eigenvalues,klaus2003eigenvalues} who established that
\begin{itemize}
\item[(a)] if $V$ is of {\it Klaus-Shaw type}, that is, a ``single-lobe'' potential defined by a non-negative, piecewise smooth, bounded $L^{1}$ function on the real line which is nondecreasing for $x<0$ and nonincreasing for $x>0$, then all eigenvalues are purely imaginary (symmetry not being a factor);
\item[(b)] there are examples of real-valued, even, piecewise constant or piecewise quadratic potentials with two or more ``lobes'' giving rise to eigenvalues that are not purely imaginary;
\item[(c)] if $V\in L^{1}$ is an odd function, there are no purely imaginary eigenvalues at all.
\end{itemize}

We shall consider these questions in the semiclassical setting and analytic category, and show that a counterpart of (a) holds for eigenvalues near $\lambda_0=i\mu_0\in i\R$ even if one only assumes that $V$ {\it locally} has the shape of a single-lobe\footnote{Here {\it lobe} is terminology adopted from Klaus and Shaw referring to a projecting or hanging part of something, like in {\it earlobe}, or the {\it lobe of a leaf}.} potential near the ``energy level'' $\mu_0$. 
We will also derive precise conditions for eigenvalues when $V$ locally has the shape of a double-lobe potential near the energy level $\mu_0$, and show that when $V$ is symmetric, this leads to an exponentially small splitting of the eigenvalues akin to the well-known splitting phenomenon observed for eigenvalues of the selfadjoint Schr{\"o}dinger operator with a double-well potential. We prove that when $V$ is even and $h>0$ is sufficiently small, this splitting is vertical from reference points on the imaginary axis; in particular, all eigenvalues are purely imaginary then. (This is in contrast to the examples in (b) which of course do not satisfy the analyticity assumption, and we believe this might help explain the confusion witnessed in the literature prior to the mentioned papers by Klaus and Shaw.) We also show that when $V$ is odd and $h>0$ is sufficiently small, the splitting is horizontal from reference points on the imaginary axis; in particular, in accordance with (c) there can be no purely imaginary eigenvalues in this case. Here we note that for fixed $h$, \eqref{evp} can be formally interpreted as a non-semiclassical Zakharov-Shabat eigenvalue problem with potential $q(x)=h^{-1}V(x)$ and spectral parameter $\zeta=h^{-1}\lambda$, so it makes sense to compare results between the two settings. In particular, the eigenvalue formation threshold $\int_{-\infty}^\infty\lvert q(x)\rvert\,dx>\pi/2$ established by Klaus and Shaw \cite{klaus2003eigenvalues} is always reached as $h\to0$. We also wish to mention that some of the examples in (b) together with the corresponding focusing NLS equation have been further analyzed by Desaix, Andersson, Helczynski, and Lisak \cite{desaix2003eigenvalues}, and Jenkins and McLaughlin \cite{jenkins2014semiclassical}, among others.

\subsection{Statement of results}
To be more precise, we shall view $P(h)$ as a densely defined operator on $L^2$ and study the eigenvalue problem \eqref{evp} for spectral parameters $\lambda=i\mu$ close to $\lambda_0=i\mu_0\in i(0,V_0)$, where $V_{0}=\max_{x \in \mathbb{R}}\lvert V(x)\rvert$, for which the potential is either a single or double lobe in a sense to be specified below. We assume that the potential satisfies the following assumptions:
\begin{itemize}
\item[$\mathrm{(i)}$] $V(x)$ is real-valued on $\R$ and analytic in a complex domain $D\subset\C$ containing an open neighborhood of the real line, and
\item[$\mathrm{(ii)}$] $\limsup_{x\to\pm\infty}\lvert V(x)\rvert < \mu_{0}$.
\end{itemize}
Examples of $D$ are tubular neighborhoods of $\R$, or more generally, domains $\{ x \in \C : \lvert \im x\rvert < \delta(\re x) \}$ where $\delta:\R\to\R_+$ is a positive continuous function which is allowed to decay as $\lvert \re x\rvert\to\infty$.
Note that the spectrum of $P(h)$ is symmetric with respect to reflection in $\R$ (as well as with respect to reflections in the imaginary axis), so it is not necessary to treat $\lambda_0\in i(-V_0,0)$ separately.
We will also not consider spectral parameters close to the real line. In fact, if (ii) is strengthened to a decay condition of the form
\begin{itemize}
\item[$\mathrm{(ii)}'$] $\lvert V(x)\rvert\le C \lvert x\rvert^{-1-d}$ for $\lvert x\rvert\gg 1$, where $C,d>0$,
\end{itemize}
then it is known that the continuous spectrum of $P(h)$ consists of the entire real axis, and that away from the origin there are no real eigenvalues. For potentials of Klaus-Shaw type satisfying $\mathrm{(ii)}'$, a precise description of the reflection coefficients as well as the eigenvalues close to zero has recently been obtained by Fujii{\'e} and Kamvissis \cite{fujiie2019semiclassical}.

Finally, it is not necessary to consider eigenvalues away from $\R\bigcup i[-V_0,V_0]$ since the spectrum of $P(h)$ accumulates on this set in the limit as $h\to0$. In fact, if $\Omega\Subset \complement( \R\bigcup i[-V_0,V_0])$ then $P(h)$ has no spectrum in $\Omega$ as long as $h$ is sufficiently small, see Dencker \cite[Section 2]{pseudosys} or Fujii{\'e} and Wittsten \cite[Proposition 2.1]{fujiie2018quantization}.
After obtaining the necessary properties in Section \ref{section:preliminaries} of the exact WKB solutions needed for our analysis, we shall therefore in Section \ref{section:single-lobe} study the spectrum of $P(h)$ near $\lambda_0=i\mu_0$ when the potential locally, near the energy level $\mu_0$, corresponds to a single lobe in the following sense.

\begin{dfn}\label{def:single-lobe}
Let $0<\mu_0<V_0$ and assume that the equation $V(x)^2 - \mu^2_0 = 0$ has exactly two real solutions $\alpha_l(\mu_0)$ and $\alpha_r(\mu_0)$ with $\alpha_l<\alpha_r$ and $V'(\alpha_\bullet) \ne 0$, $\bullet=l,r$. We then say that $V$ is a {\it single-lobe potential near $\mu_0$}.
\end{dfn}

We may without loss of generality assume that the roots of the equation $V(x)^2-\mu^2=0$ (called {\it turning points}) are roots to $V(x)=\mu_0$ (so that $V'(\alpha_l)>0$ and $V'(\alpha_r)<0$) since the case when they are roots to $V(x)=-\mu_0$ can be studied by replacing the potential $V$ with $-V$ and reducing the resulting eigenvalue problem to the original one.\footnote{In fact, if $(u,\lambda)$ solves \eqref{evp} with $V$ replaced by $-V$, then it follows that $(v,\lambda)$ with
$$
v=\begin{pmatrix*}[r]1&0\\0&-1\end{pmatrix*}u
$$
satisfies the original eigenvalue problem \eqref{evp}, since
$$
P(h)v=P(h)\begin{pmatrix*}[r]1&0\\0&-1\end{pmatrix*}u=\begin{pmatrix*}[r]1&0\\0&-1\end{pmatrix*}\begin{pmatrix}-hD_x&-iV\\-iV&hD_x\end{pmatrix}u=\begin{pmatrix*}[r]1&0\\0&-1\end{pmatrix*}\lambda u=\lambda v.
$$
This can of course also be realized by noting that if $\psi$ solves \eqref{nls} then $\tilde\psi=-\psi$ solves the NLS equation with initial condition $\tilde\psi(0,x)=-V(x)$.}
(It is not possible that one turning point is a root to $V(x)=\mu_0$ and the other to $V(x)=-\mu_0$ since this would give four solutions to $V(x)^2-\mu^2_0=0$ when $V'(\alpha_l),V'(\alpha_r)\ne0$.)
Figure \ref{fig:single-lobe} illustrates two stereotypical examples of this situation. Of course, any potential $V$ of Klaus-Shaw type is a single-lobe potential near $\mu_0\in (0,V_0)$. Note also that the turning points depend continuously (even analytically) on $\mu$ as long as the multiplicity is constant. In particular, Definition \ref{def:single-lobe} 
cannot hold at $\mu_0=0$ or $\mu_0=V_0$ (or at any local extreme values of $V$) because the situation degenerates then, which explains why these values are excluded. 
It also explains why it makes sense to say that $V$ is a single-lobe potential {\it near} $\mu_0$, since there is then an $\ve$-neighborhood $B_\ve(\mu_0)\subset\C$ around $\mu_0$ such that if $\mu\in B_\ve(\mu_0)$ then the equation $V(x)^2 - \mu^2 = 0$ has exactly two solutions $\alpha_l(\mu)$ and $\alpha_r(\mu)$ with $\re \alpha_l<\re \alpha_r$, $\re V^{\prime}(\alpha_l) > 0$ and $\re V^{\prime}(\alpha_r) < 0$.
For such $\mu$ we define the action integral
\begin{equation}\label{actionintegralsimplelobe}
I(\mu)=\int_{\alpha_l(\mu)}^{\alpha_r(\mu)}(V(t)^2-\mu^2)^{1/2}\, dt,
\end{equation}
where the determination of the square root is chosen so that $I(\mu)$ is real and positive for real $\mu$.
In this case, we prove in Section \ref{section:single-lobe} that there are constants $\ve,h_0>0$ such that if $\mu\in B_\ve(\mu_0)$ and $0<h\le h_0$ then $\lambda=i\mu$ is an eigenvalue if and only if the Bohr-Sommerfeld quantization condition 
\begin{equation}\label{intro:qcondsw}
I(\mu) = \bigg(k+\frac{1}{2}\bigg)\pi h + h^2r(\mu,h)
\end{equation}
is satisfied for some integer $k$,
see Theorem \ref{QCL_}. Here $r(\mu,h)$ is a function defined on $B_\ve(\mu_0)\times(0,h_0]$ with $r=O(1)$ as $h\to0$. In particular, if $\musw(h)$ is the unique root of $I(\mu) = (k+\frac{1}{2})\pi h$ near $\mu_0$ (where the superscript $\mathrm{sl}$ refers to {\it single lobe}), and $\lambda_k^\mathrm{sl}=i\musw$, then there is a unique eigenvalue $\lambda_k=i\mu_k$ such that
$$
\lvert\lambda_k(h)-\lambda_k^\mathrm{sl}(h)\rvert=O(h^2),
$$
see Remark \ref{rmk:EVnbh}.
We also obtain the following refinement of \cite[Theorem 2.2]{fujiie2019semiclassical} showing that for single-lobe potentials, the semiclassical eigenvalues are confined to the imaginary axis:

\begin{figure}[!t]
\centering
\includegraphics[scale=.9]{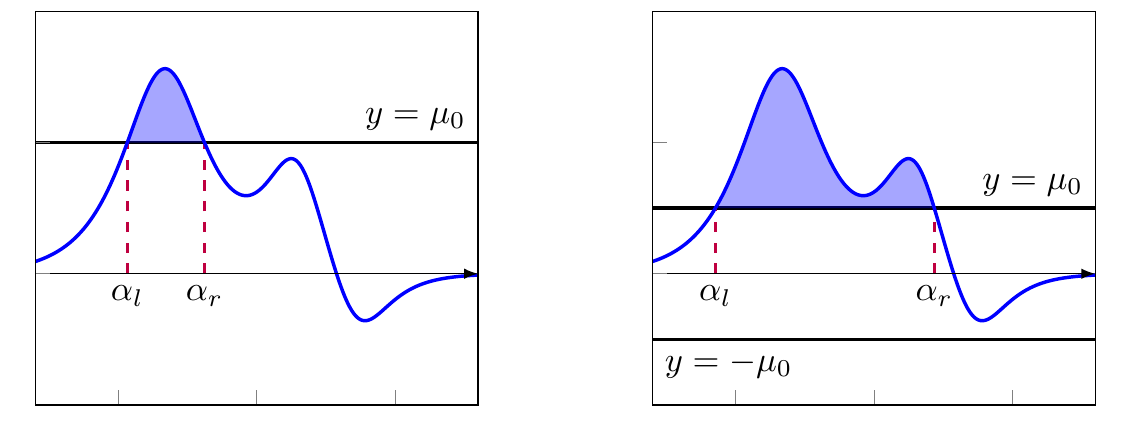}
\caption{Two examples of levels $\mu_{0}\in\R$ such that the depicted function $V$ is a single-lobe potential near $\mu_0$. The corresponding lobes are shaded blue. \label{fig:single-lobe}}
\end{figure}

\begin{thm} \label{pEV}
If $V$ is a single-lobe potential near $\mu_0=-i\lambda_0$, then there exist positive constants $h_{0}$ and $\ve$ such that the point spectrum of $P(h)$ satisfies
$$
\sigma_{p}(P(h)) \bigcap \left\{\lambda \in \mathbb{C}: \lvert\lambda-\lambda_{0}\rvert< \ve \right\} \subset i\mathbb{R}
$$
when $0 < h \leq h_{0}$.
\end{thm}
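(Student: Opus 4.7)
The plan is to combine the Bohr-Sommerfeld quantization condition of Theorem \ref{QCL_} with a Schwarz reflection argument. By that theorem, after possibly shrinking $\ve$ and $h_0$, a point $\lambda\in\C$ with $\lvert \lambda - \lambda_0 \rvert < \ve$ belongs to $\sigma_p(P(h))$ if and only if $\mu := -i\lambda \in B_\ve(\mu_0)$ solves
\begin{equation*}
G(\mu,h) := I(\mu) - h^2 r(\mu,h) = \bigl(k+\tfrac{1}{2}\bigr)\pi h
\end{equation*}
for some $k\in\Z$. It therefore suffices to show that every root $\mu\in B_\ve(\mu_0)$ of this equation is real.

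I would first establish that $G(\cdot,h)$ is analytic on $B_\ve(\mu_0)$ and real-valued on $B_\ve(\mu_0)\cap\R$. For $I$, the integrand in \eqref{actionintegralsimplelobe} is positive on $(\alpha_l(\mu_0),\alpha_r(\mu_0))$ by the single-lobe assumption, so $I$ is manifestly real-analytic there; Schwarz reflection then yields $I(\bar\mu)=\overline{I(\mu)}$. For the remainder $r$, one must inspect its construction in Section \ref{section:single-lobe}: the exact WKB solutions, their Wronskians and the connection coefficients from which $r$ is assembled are all built from the real-analytic datum $V$, so by choosing base points of the phase functions on the real axis one can arrange that $r(\cdot,h)$ is real on $B_\ve(\mu_0)\cap\R$ and hence $r(\bar\mu,h)=\overline{r(\mu,h)}$ by Schwarz reflection.

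The second ingredient is non-degeneracy of $G$. Differentiating \eqref{actionintegralsimplelobe} at $\mu=\mu_0$, the boundary contributions vanish because $V(\alpha_\bullet(\mu_0))^2=\mu_0^2$, leaving
\begin{equation*}
I'(\mu_0) = -\mu_0\int_{\alpha_l(\mu_0)}^{\alpha_r(\mu_0)} \bigl(V(t)^2-\mu_0^2\bigr)^{-1/2}\,dt < 0,
\end{equation*}
since the integrand is positive on $(\alpha_l(\mu_0),\alpha_r(\mu_0))$. Because $r=O(1)$ as $h\to 0$, Cauchy estimates give $\partial_\mu r=O(1)$ as well, whence $\partial_\mu G(\mu_0,h)=I'(\mu_0)+O(h^2)\ne 0$ for $h$ small. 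The implicit function theorem then produces $h_0,\ve>0$ such that for every $0<h\le h_0$ the map $G(\cdot,h)$ is a biholomorphism of $B_\ve(\mu_0)$ onto its image. Combined with the conjugation symmetry, this forces the unique $\mu\in B_\ve(\mu_0)$ solving $G(\mu,h)=(k+\tfrac{1}{2})\pi h$, when such a solution exists, to satisfy $\mu=\bar\mu\in\R$; hence $\lambda=i\mu\in i\R$, as required.

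The main obstacle is the reality of $r(\mu,h)$ for real $\mu$: it is not manifest from the definition and must be extracted step by step from the exact WKB apparatus, verifying that each normalization and connection step commutes with the involution $\mu\mapsto\bar\mu$ under the hypothesis that $V$ is real on $\R$. Once this reality-preserving property is in place, the theorem reduces essentially to the implicit function theorem.
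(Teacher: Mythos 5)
Your proposal follows the same structure as the paper's proof: reduce to the quantization condition of Theorem \ref{QCL_}, exploit conjugation symmetry of the action $I$ and the remainder $r$, and conclude via $I'(\mu_0)\neq 0$ together with an $O(1)$ bound on $\partial_\mu r$ that the quantization map $\mathcal I(\mu,h)=I(\mu)-h^2 r(\mu,h)$ is injective on a fixed ball around $\mu_0$, which forces each root to coincide with its conjugate. Two remarks on the details. The symmetry $r=r^\ast$ (equivalently, reality of $r$ on $\R$) is already part of the statement of Theorem \ref{QCL_} and is proved there, so the ``main obstacle'' you identify does not need to be re-established from scratch; the paper encodes it via the $\ast$-involution on Wronskians and WKB solutions, pairing amplitude base points $y,\bar y$ symmetrically across the real axis (Proposition \ref{proposition:sym1}) rather than choosing them on $\R$ as your sketch suggests. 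Your Cauchy-estimate route to $\partial_\mu r=O(1)$ from the uniform bound $r=O(1)$ on $B_\ve(\mu_0)$ (after shrinking the ball) is a legitimate and slightly more elementary substitute for the paper's appeal to the analytic-symbol property of the WKB amplitudes, which yields $\partial_\mu w^+_{\mathrm{even}}=O(h)$ and hence the same conclusion; given either derivative bound, the injectivity-plus-symmetry endgame you describe matches Lemma \ref{lem:referencepoints} and the published proof.
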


Section \ref{section:double-lobe} studies the eigenvalue problem for potentials assumed to locally have the features of a double lobe.

\begin{dfn}\label{def:double-lobe}
Let $0<\mu_0<V_0$ and assume that the equation $V(x)^2 - \mu^2_0 = 0$ has exactly four real solutions $\alpha_{l}(\mu_0)$, $\beta_{l}(\mu_0)$, $\beta_{r}(\mu_0)$ and $\alpha_r(\mu_0)$ with $\alpha_{l}< \beta_{l}< \beta_{r}<\alpha_r$ and $V^{\prime}(\alpha_\bullet),V^{\prime}(\beta_\bullet) \neq 0$, $\bullet=l,r$. We then say that $V$ is a {\it double-lobe potential near $\mu_0$.}
\end{dfn}

Figure \ref{fig:double-lobe} shows two stereotypical examples of double-lobe potentials. In the first example, $V(\beta_l)=V(\beta_r)>0$, whereas $V(\beta_l)=-V(\beta_r)>0$ in the second. 
As indicated, it suffices to consider these two situations (i.e., {\it peak-peak} and {\it peak-valley}) since the other two cases can be obtained, as for single-lobe potentials, by replacing the potential $V$ by $-V$ and reducing the corresponding eigenvalue problem to the original one. By continuity there is an $\ve>0$ such that for $\mu\in B_\ve(\mu_0)$, the equation $V(x)^2 - \mu^2 = 0$ still has exactly four solutions $\alpha_{l}(\mu)$, $\beta_{l}(\mu)$, $\beta_{r}(\mu)$ and $\alpha_r(\mu)$ with $\re\alpha_{l}< \re\beta_{l}< \re\beta_{r}<\re\alpha_r$ and the signs of $\re V'(\alpha_\bullet),\re V'(\beta_\bullet)$ unaffected.  For such $\mu$ we introduce the action integrals
\begin{equation}\label{actionintegraldoublelobe}
I_l(\mu)=\int_{\alpha_l(\mu)}^{\beta_l(\mu)}(V(t)^2-\mu^2)^{1/2}\, dt,\quad
I_r(\mu)=\int_{\beta_r(\mu)}^{\alpha_r(\mu)}(V(t)^2-\mu^2)^{1/2}\, dt,
\end{equation}
and
\begin{equation}\label{actionintegralJ}
J(\mu)=\int_{\beta_l(\mu)}^{\beta_r(\mu)}(\mu^2-V(t)^2)^{1/2}\, dt,
\end{equation}
where the determinations of the square roots are chosen in such a way that each action integral is real-valued and positive for real $\mu$. We show that there are positive constants $\ve,h_{0}$ such that if $\mu \in B_\ve(\mu_0)$ and $0<h\le h_0$ then $\lambda = i\mu$ is an eigenvalue in the case when $V(\beta_l) = \pm V(\beta_r)$ if and only if
\begin{equation}\label{eq:qc4}
\Big(e^{iI_l/h}\gamma_l+e^{-iI_l/h}\gamma_l^\myconj\Big)
\Big( e^{iI_r/h}\gamma_r+e^{-iI_r/h}\gamma_r^\myconj\Big)
\mp e^{-2J/h}\sin(I_l/h)\sin(I_r/h)=0,
\end{equation}
see Theorem \ref{qc4}. Here $\gamma_\bullet(\mu,h)$, $\bullet=l,r$, are functions defined on $B_\ve(\mu_0) \times (0,h_{0}]$ with $\gamma_\bullet=1+O(h)$ as $h\to0$, and $\ast$ denotes the operation
$$
\gamma_l^\ast(\mu)=\overline{\gamma_l(\bar \mu)},
$$
see \S\ref{ss:symmetry}.
From the quantization condition \eqref{eq:qc4} we see that modulo an exponentially small error the eigenvalues $\lambda = i\mu$ for $\mu \in B_\ve(\mu_{0})$ are given in terms of the roots to the equation
\begin{align*}
\Big(e^{iI_l/h}\gamma_l+e^{-iI_l/h}\gamma_l^\myconj\Big)
\Big( e^{iI_r/h}\gamma_r+e^{-iI_r/h}\gamma_r^\myconj\Big)
 = 0.
\end{align*}
This is equivalent to the two Bohr-Sommerfeld quantization conditions corresponding to each potential lobe, i.e.,
\begin{align*}
\frac{ \gamma_l}{ \gamma_l^\ast} e^{2iI_{l}/h} = -1, \quad \frac{ \gamma_r}{ \gamma_r^\ast} e^{2iI_{l}/h} = -1.
\end{align*}
These may be rewritten in the form
\begin{align*}
I_{l}(\mu) = \bigg(k + \frac{1}{2} \bigg)\pi h + h^{2} r_{l}(\mu,h) , \quad I_{r}(\mu) = \bigg(k + \frac{1}{2} \bigg)\pi h + h^{2} r_{r}(\mu,h),
\end{align*}
where
\begin{align*}
r_{l} = \frac{1}{2ih}\log \bigg(\frac{ \gamma_l}{ \gamma_l^\ast} \bigg), \quad r_{r} = \frac{1}{2ih}\log \bigg(\frac{ \gamma_r}{ \gamma_r^\ast}\bigg)
\end{align*}
are both bounded when $h$ tends to 0.
Thus we conclude  that the set of eigenvalues produced by a double-lobe potential is exponentially close to the union of the sets of eigenvalues produced by each potential lobe (cf.~\eqref{intro:qcondsw}).
This is a well-known fact for the Schr{\"o}dinger equation, see \cite{helffer1984multiple} and \cite{gerard1988precise}.

\begin{figure}[!t]
\centering
\includegraphics[scale=.9]{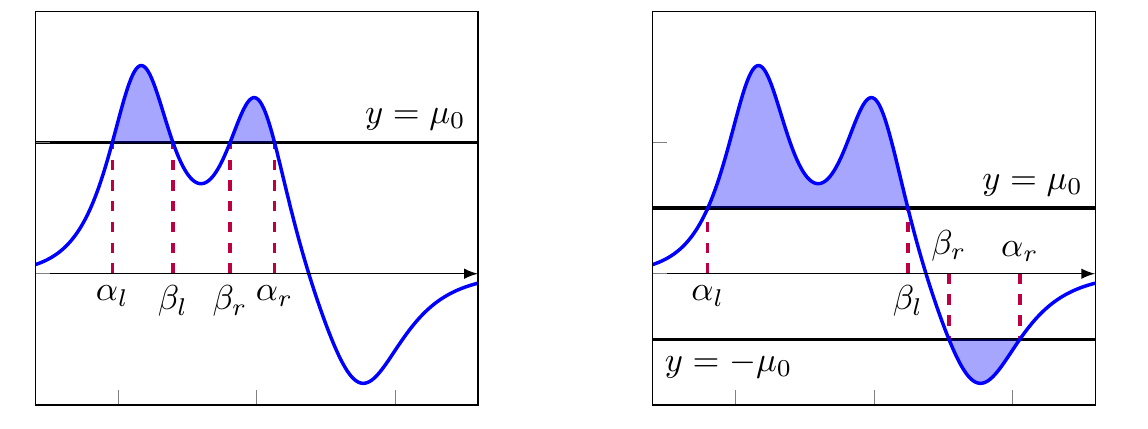}
\caption{Two examples of levels $\mu_{0}\in\R$ such that the depicted function $V$ is a double-lobe potential near $\mu_0$. The corresponding lobes are shaded blue. \label{fig:double-lobe}}
\end{figure}

\begin{rmk}
For readers familiar with the time-independent Schr{\"o}dinger equation we wish to mention that ``inside'' the lobe(s) (the projection of the shaded regions in Figures \ref{fig:single-lobe}--\ref{fig:double-lobe} onto the real axis), solutions to \eqref{evp} are oscillating, while they are exponential in character ``outside'' the lobe(s). In this sense, the lobes can thus be said to correspond to potential {\it wells} 
(rather than to {\it barriers})
in the terminology of quantum mechanics. 
\end{rmk}

Section \ref{section:symmetric} considers the special case of double-lobe potentials $V$ such that
$V(x)$ is either an even or an odd function of $x\in\R$.
If this assumption holds, the quantization condition \eqref{eq:qc4} can be rewritten in the case when $V(x) = \pm V(-x)$ as
\begin{align}
\label{experr}
 4\rho^2 \cos^2(\tilde I/h) \mp e^{-2J/h}\sin ^2( I/h)= 0,
\end{align}
see Proposition \ref{prop:qc4}.
Here, $I= I_l=I_r$, while $\tilde I= I+O(h^2)$ and $\rho =1+ O(h)$ as $h \to 0$. Thus, modulo an exponentially small error, the eigenvalues produced by each potential lobe satisfy the same 
Bohr-Sommerfeld quantization condition, namely
\begin{equation}
\label{bss}
\tilde I(\mu) = \bigg(k + \frac{1}{2} \bigg)\pi h
\end{equation}
for some integer $k=k(h)$.
If $\mudw(h)$ is the unique root of \eqref{bss} near $\mu_0$ (where the superscript $\mathrm{dl}$ stands for {\it double lobe}), it turns out that $i\mudw$ is purely imaginary.
Now, eigenvalues $\lambda=i\mu$ of the Dirac operator (where $\mu$ satisfies the quantization condition \eqref{experr}) split in pairs symmetrically about the reference points $i\mudw$.

\begin{thm}\label{splitting}
Suppose that $V$ is a double-lobe potential near $\mu_0$ such that $V(x)$ is either an even or an odd function of $x\in\R$, and let $\mudw(h)$ be the unique root of \eqref{bss} near $\mu_0$.
Then $i\mudw\in i\R$ and the two eigenvalues $i\mu_{k}^{+}(h)$, $i\mu_{k}^{-}(h)$ approximated by $i\mudw(h)$ have the following asymptotic behavior as $h \to 0$:
\begin{itemize}
\item[$(1)$]
If $V(x)$ is an even function, then
\begin{align*}
i\mu_k^\pm(h)-i\mudw(h) =  \pm i e^{-J(\mudw)/h}\bigg( \frac{h}{2I^{\prime}(\mudw)}+O(h^2)\bigg).
\end{align*}
\item[$(2)$]
If $V(x)$ is an odd function, then
\begin{align*}
i\mu_k^\pm(h)-i\mudw(h)=  \pm e^{-J(\mudw)/h}\bigg( \frac{h}{2I^{\prime}(\mudw)}+O(h^2)\bigg).
\end{align*}
\end{itemize}
Moreover, the eigenvalues split precisely vertically in the even case, 
whereas they split precisely horizontally in the odd case. Thus, for $0<h\le h_0$, all eigenvalues are purely imaginary when $V$ is even, and no eigenvalue is purely imaginary when $V$ is odd.
\end{thm}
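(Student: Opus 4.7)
My plan is to factor the quantization condition \eqref{experr} and apply the analytic implicit function theorem to each factor, then exploit the $\myconj$-symmetry inherited from the symmetric reduction of Section \ref{section:symmetric} to pin down the exact geometry of the resulting pair of roots. The inputs I take for granted are that $\rho$, $I$, $\tilde I$ and $J$ are real-analytic on $\R$ (i.e.\ $f^{\myconj}=f$); together with $\tilde I'(\mu_0)\neq 0$ this already gives a unique real root $\mudw$ of $\tilde I(\mu)=(k+\tfrac12)\pi h$ near $\mu_0$, proving $i\mudw\in i\R$. I rewrite \eqref{experr} as $F_+(\mu,h)F_-(\mu,h)=0$, where
\begin{equation*}
F_\pm(\mu,h):=2\rho\cos(\tilde I/h)\pm\epsilon\, e^{-J/h}\sin(I/h),\qquad \epsilon=\begin{cases}1,& V\text{ even},\\ i,& V\text{ odd},\end{cases}
\end{equation*}
and reduce the task to locating the zeros of $F_+$ and $F_-$ close to $\mudw$.

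At $\mu=\mudw$, using $\tilde I=I+O(h^2)$, I have $\cos(\tilde I/h)=0$, $\sin(\tilde I/h)=(-1)^k$, and $\sin(I/h)=(-1)^k(1+O(h^2))$, so that $F_\pm(\mudw,h)=\pm\epsilon(-1)^k e^{-J(\mudw)/h}(1+O(h^2))$ is exponentially small but nonzero, while differentiation of the dominant $\cos(\tilde I/h)$ yields $\partial_\mu F_\pm(\mudw,h)=-2(-1)^k\tilde I'(\mudw)/h\cdot(1+O(h))$, of order $1/h$. A Rou\-ch\'e/Newton argument then produces a unique zero $\mu_\pm$ of $F_\pm$ in a disc of radius $\sim he^{-J(\mudw)/h}$ around $\mudw$, and one Newton step combined with $\tilde I'=I'+O(h^2)$ gives
\begin{equation*}
\mu_\pm(h)-\mudw(h)=\pm\epsilon\, e^{-J(\mudw)/h}\Bigl(\frac{h}{2I'(\mudw)}+O(h^2)\Bigr),
\end{equation*}
with subsequent Newton corrections of size $he^{-2J/h}$ absorbed in the error. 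Multiplying by $i$ recovers the asymptotic expansions in cases (1) and (2).

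It remains to settle the geometry of the splitting. In the even case ($\epsilon=1$) I observe that $F_\pm^{\myconj}=F_\pm$, so $\bar\mu_\pm$ is again a zero of $F_\pm$ inside the same disc; by uniqueness $\mu_\pm=\bar\mu_\pm\in\R$, hence $i\mu_k^\pm\in i\R$ and the splitting is exactly vertical. In the odd case ($\epsilon=i$) the $\myconj$-operation conjugates $i\mapsto-i$ while fixing $\rho,I,\tilde I,J$, so $F_+^{\myconj}=F_-$. Evaluating the identity $\overline{F_+(\bar z)}=F_-(z)$ at $z=\bar\mu_+$ yields $F_-(\bar\mu_+)=\overline{F_+(\mu_+)}=0$, so by uniqueness $\mu_-=\bar\mu_+$; this in turn gives $\re\mu_+=\re\mu_-$, equivalently $\im(i\mu_k^+)=\im(i\mu_k^-)$, which is exactly horizontal splitting. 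Moreover, for $\mu\in\R$ the same identity reads $F_-(\mu)=\overline{F_+(\mu)}$, so the left-hand side of \eqref{experr} equals $|F_+(\mu)|^2$ and vanishes only at a real zero of $F_+$; but the asymptotic just derived shows $\im\mu_+=he^{-J(\mudw)/h}/(2I'(\mudw))(1+O(h))>0$ for small $h$, ruling out any real zero of $F_+$ and hence any purely imaginary eigenvalue near $i\mudw$.

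The main obstacle I anticipate is the quantitative implicit function argument, since I need to control the unique zero of an exponentially small holomorphic perturbation of $2\rho\cos(\tilde I/h)$ with the sharp $O(h^2)$ remainder in the bracket. Everything downstream—vertical versus horizontal splitting, and the dichotomy between purely imaginary and non-imaginary eigenvalues—then follows cleanly from the $\myconj$-invariance of $\rho,I,\tilde I,J$ established in Section \ref{section:symmetric}.
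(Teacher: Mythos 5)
Your proof is correct and follows essentially the same route as the paper: both approaches reduce the quantization condition \eqref{experr} to the two branches $\cos(\tilde I/h)=\mp R$ (your $F_\pm=0$), locate a unique root near $\mudw$ by a Newton/Taylor step, and read off the exact geometry of the splitting from the $\myconj$-symmetry $F_\pm^\myconj=F_\pm$ (even) versus $F_+^\myconj=F_-$ (odd) --- your explicit factorization $F_+F_-$ making the parity of the splitting slightly more immediate than the paper's dichotomy \eqref{symmetricpoints} followed by the asymptotics. One harmless slip: since $I'(\mudw)<0$ (as in the proof of Lemma \ref{lem:referencepoints}), your asserted inequality $\im\mu_+>0$ in the odd case has the wrong sign; all you actually use is $\im\mu_+\neq 0$, which holds.
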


The proof relies on the explicit exponential error term in \eqref{experr} which we obtain by using a novel method, inspired by recent work due to Mecherout, Boussekkine, Ramond and Sj{\"o}strand \cite{mecherout2016pt}, to refine the WKB analysis for the Dirac operator by introducing carefully chosen WKB solutions defined ``between'' the lobes.
As already mentioned, the results are reminiscent of the well-known splitting of eigenvalues for the linear Schr{\"o}dinger operator with a symmetric double-well potential, going back to the work of Landau and Lifshitz \cite{landau2013course} and studied mathematically by, among others, Simon \cite{simon1983instantons}, Helffer and Sj{\"o}strand \cite{helffer1984multiple} and G{\'e}rard and Grigis \cite{gerard1988precise}. This type of tunneling effect has recently also been observed for a system of semiclassical Schr{\"o}dinger operators by Assal and Fujii{\'e} \cite{assal2019eigenvalues}. For more on this topic we refer to the mentioned works and the references therein.

In the literature a common focus of study is the appearance and location of purely imaginary eigenvalues as the $L^1$ norm of the potential increases, for example by taking $q(x)=h^{-1}V(x)$ and letting $h$ decrease. Potentials of the form
$$
q(x)=h^{-1}(\sech(x-x_0)+\sech(x+x_0))
$$
consisting of two separated sech-shaped pulses have been numerically investigated by Desaix, Anderson and Lisak \cite{desaix2008eigenvalues} for different separations $x_0$. They found that at the first critical amplitude $h^{-1}=1/4$, a purely imaginary eigenvalue $\zeta_1$ appears, and for $h^{-1}<1/4$ there are no eigenvalues (consistent with the threshold of Klaus and Shaw \cite{klaus2003eigenvalues}). For small separations, $q$ behaves almost like a single-lobe potential, and the second critical amplitude $h^{-1}=3/4$ also gives rise to a purely imaginary eigenvalue. However, for larger separations such as $x_0=5$, two complex eigenvalues $\zeta_{2,3}=\pm\xi+i\eta$ with nonzero real parts are created already in the vicinity of $h^{-1}=4/10$. As the amplitude $h^{-1}$ increases, the real parts decrease while $\eta$ increases until the two eigenvalues meet and then separate along the imaginary axis (both now purely imaginary, $\zeta_2$ with increasing and $\zeta_3$ with decreasing imaginary part). As $h^{-1}$ reaches the second critical amplitude $3/4$, $\zeta_3$ is destroyed and only $\zeta_1$ and $\zeta_2$ remain. Since $\zeta=h^{-1}\lambda$, we should be able to see a similar type of behavior for semiclassical eigenvalues of $P(h)$ as $h$ decreases, which is something we hope to investigate in a future paper. Of course, as $h$ becomes sufficiently small, our results show that for a potential consisting of two separated sech-pulses, all eigenvalues $\lambda=i\mu$ are purely imaginary as long as $\mu\ne0$ is not close to a local extreme value of $V$, see Figure \ref{fig:single-double}. It would also be interesting to see if this property persists as $\mu$ tends to local extreme values of $V$; the exact WKB method does not work then so other methods are needed.

\begin{figure}[!t]
\centering
\includegraphics[scale=.9]{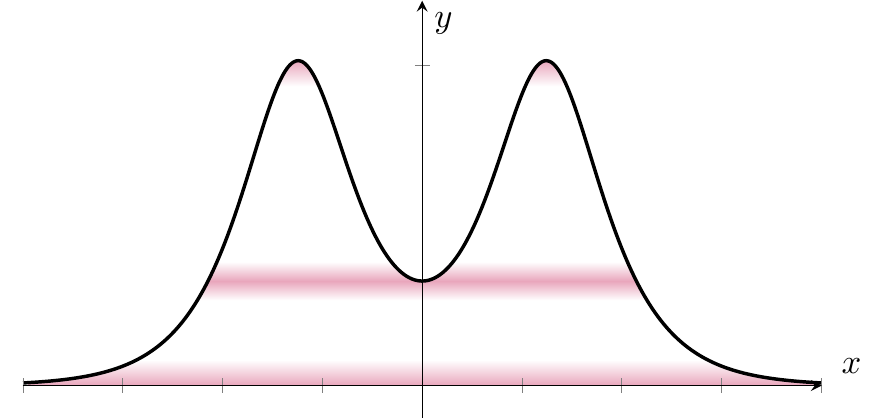}
\caption{An even potential $V(x)$ with a local minimum at $x=0$. Away from the shaded region $V$ is either a single-lobe or a double-lobe potential. For sufficiently small $h$, any eigenvalue $\lambda$ of $P(h)$ with imaginary part away from the shaded region must therefore be purely imaginary by Theorems \ref{pEV} and \ref{splitting}. \label{fig:single-double}}
\end{figure}

\section{Exact WKB analysis}\label{section:preliminaries}
\subsection{Exact WKB solutions}

Here we recall the construction of a solution of the Dirac system in a complex domain as a convergent series, known as an exact WKB solution. Such solutions were first introduced by Ecalle \cite{ecalle1984cinq} and later used by G\'erard and Grigis \cite{gerard1988precise} to study the Schr\"odinger operator. We shall follow the construction for systems due to Fujii{\'e}, Lasser and N{\'e}d{\'e}lec \cite{fujiie2009semiclassical}.

The system \eqref{evp} can be written in the form
\begin{equation}\label{system}
\frac{h}{i}\frac{du}{dx}=M(x,\lambda)u,\quad M(x,\lambda)=\begin{pmatrix} -\lambda & iV(x)\\ -iV(x) & \lambda\end{pmatrix}.
\end{equation}
Recall (see \cite{fujiie2009semiclassical}) that the exact WKB solutions of systems of type \eqref{system} are of the form
\begin{equation}\label{exactWKB}
u^\pm(x,h)=e^{\pm z(x)/h}\begin{pmatrix*}[r] 1 & 1\\ -1 & 1\end{pmatrix*}Q(z(x))\begin{pmatrix} 0 & 1\\ 1 & 0\end{pmatrix}^{(1\pm1)/2}w^\pm(x,h),
\end{equation}
where the function $z(x)$ is the complex change of coordinates
\begin{equation*}
z(x)=z(x;x_0)=i\int_{x_0}^x\sqrt{V(t)^2+\lambda^2}\de t
\end{equation*}
for some choice of phase base point $x_0$ in the strip $D$ where $V$ is assumed to be analytic, while $Q$ is the matrix valued function
\begin{equation}\label{defQ}
Q(z)=\begin{pmatrix} H(z)^{-1} & H(z)^{-1}\\ iH(z) & -iH(z)\end{pmatrix}
\quad \text{with }
H(z(x))=\bigg(\frac{iV(x)+\lambda}{iV(x)-\lambda}\bigg)^{1/4}.
\end{equation}
Here $z(x)$ and $Q(z)$ are defined on the Riemann surfaces of $(V^2+\lambda^2)^{1/2}$ and $H(z({\hdot}))$ over $D$, respectively.
These Riemann surfaces are defined by introducing branch cuts emanating from the zeros of $x\mapsto\det(M(x,\lambda))$, i.e., of $iV\pm\lambda$ (the turning points of the system \eqref{system}), see \S\ref{ss:riemann}.

The amplitude vectors $w^\pm$ in \eqref{exactWKB} are defined as the (formal) series
\begin{equation}\label{amplitude}
w^\pm(x,h)= \begin{pmatrix} w^\pm_{\mathrm{even}}(x,h)\\ w^\pm_{\mathrm{odd}}(x,h)\end{pmatrix}
=\sum_{n=0}^\infty 
\begin{pmatrix} w^\pm_{2n}(z(x))\\ w^\pm_{2n+1}(z(x))\end{pmatrix},
\end{equation}
where $w^\pm_0(z)\equiv 1$, while $w^\pm_j(z)$ for $j\ge1$ are the unique solutions to the scalar transport equations
\begin{equation}\label{transportodd}
\bigg(\frac{d}{dz}\pm\frac{2}{h}\bigg)w^\pm_{2n+1}(z)=\frac{dH(z)/dz}{H(z)}w^\pm_{2n}(z),
\end{equation}
\begin{equation}\label{transporteven}
\frac{d}{dz}w^\pm_{2n+2}(z)=\frac{dH(z)/dz}{H(z)}w^\pm_{2n+1}(z)
\end{equation}
with prescribed initial conditions $w_n^\pm(\tilde z)=0$ for some choice of amplitude base point $\tilde z=z(\tilde x)$ where $\tilde x$ is not a turning point. 
When we want to signify the dependence on the base point $\tilde z=z(\tilde x)$ we write $$
w^\pm(x,h;\tilde x)
=\begin{pmatrix} w^\pm_{\mathrm{even}}(x,h;\tilde x)\\ w^\pm_{\mathrm{odd}}(x,h;\tilde x)\end{pmatrix}
$$
for the amplitude vectors.

Recall that if $\Omega$ is a simply connected open subset of $D$ which is free from turning points then $z=z(x)$ is conformal from $\Omega$ onto $z(\Omega)$. For fixed $h>0$, the formal series \eqref{amplitude} converges uniformly in a neighborhood of the amplitude base point $\tilde x$, and $w^\pm_{\mathrm{even}}(x,h)$ and $w^\pm_{\mathrm{odd}}(x,h)$ are analytic functions in $\Omega$, see \cite[Lemma 3.2]{fujiie2009semiclassical}. As a consequence, the functions $u^\pm$ given by \eqref{exactWKB} are exact solutions of \eqref{system} and when we wish to indicate the particular choice of amplitude base point $\tilde x\in \Omega$ and phase base point $x_0\in D$ we will write $u^\pm(x;x_0,\tilde x)$.
We remark that these solutions are defined for example everywhere on $\R$, although some of the expressions involved are only defined on Riemann surfaces of $(V^2+\lambda^2)^{1/2}$ or $H(z({\hdot}))$.

For fixed $\tilde x\in\Omega$, let $\Omega_\pm$ be the set of points $x$ for which there is a path from $\tilde x$ to $x$ along which $t\mapsto \pm\re z(t)$ is strictly increasing. In other words, $x\in\Omega_\pm$ if there is a path which intersects the the level curves of $t\mapsto \re z(t)$ transversally in the appropriate direction. The level curves of $t\mapsto \re z(t)$ are called {\it Stokes lines}.

\begin{rem}\label{h-asymptotics}
For any integers $k,N\in\N$
$$
\partial^k\big(w^\pm_\mathrm{even}(x,h)-\sum_0^Nw_{2n}^\pm(z(x))\big)=O(h^{N+1}),
$$
$$
\partial^k\big(w^\pm_\mathrm{odd}(x,h)-\sum_0^Nw_{2n+1}^\pm(z(x))\big)=O(h^{N+2}),
$$
uniformly on compact subsets of $\Omega_\pm$ as $h\to0$, see \cite[Proposition 3.3]{fujiie2009semiclassical}.
In particular,
$$
w^\pm_\mathrm{even}(x,h)=1+O(h),\quad w^\pm_\mathrm{odd}(x,h)=O(h),
$$
as $h\to0$.
\end{rem}

\subsection{The Wronskian formula}

For vector-valued solutions $u$ and $v$ of \eqref{system}, let $\mathcal W(u,v)$ be the {\it Wronskian} defined by
$$
\mathcal W(u,v)(x)=\det(u(x)\ v(x)).
$$
Since the trace of the matrix $M(x,\lambda)$ is zero, it follows that $\mathcal W(u,v)$ is in fact independent of $x$. 
If $x_0$ is a phase base point in $D$ and $\tilde x,\tilde y$ are different amplitude base points in $\Omega$, a straightforward calculation shows that
\begin{multline*}
\mathcal W(u^+(x;x_0,\tilde x),u^-(x;x_0,\tilde y))
\\=-4i\big(
w_\mathrm{odd}^+(x,h;\tilde x)w_\mathrm{odd}^-(x,h;\tilde y)-
w_\mathrm{even}^+(x,h;\tilde x)w_\mathrm{even}^-(x,h;\tilde y)\big),
\end{multline*}
where the solutions $u^\pm$ are given by \eqref{exactWKB}. Recalling the initial conditions of the transport equations \eqref{transportodd}--\eqref{transporteven} and evaluating at $x=\tilde y$ we get
\begin{equation}\label{Wronskian1+}
\mathcal W(u^+(x;x_0,\tilde x),u^-(x;x_0,\tilde y))
=4iw_\mathrm{even}^+(\tilde y,h;\tilde x).
\end{equation}
We may of course also choose $x=\tilde x$, which gives 
\begin{equation}\label{Wronskian1-}
\mathcal W(u^+(x;x_0,\tilde x),u^-(x;x_0,\tilde y))
=4iw_\mathrm{even}^-(\tilde x,h;\tilde y).
\end{equation}
In particular, we see that if there is a path from $\tilde x$ to $\tilde y$ along which the function $t\mapsto \re z(t)$ is strictly increasing, then $\mathcal W(u^+(x;x_0,\tilde x),u^-(x;x_0,\tilde y))
=4i+O(h)$ as $h\to0$ by Remark \ref{h-asymptotics}, showing that such a pair of solutions is linearly independent if $h$ is sufficiently small. We also recall the Wronskian formula for pairs of solutions of the same type: 
\begin{multline*}
\mathcal W(u^\pm(x;x_0,\tilde x),u^\pm(x;x_0,\tilde y))
=-4i(-1)^{(1\pm 1)/2}e^{\pm2 z(x;x_0)/h}\\ \times\big(
w_\mathrm{even}^\pm(x,h;\tilde x) w_\mathrm{odd}^\pm(x,h;\tilde y)-
w_\mathrm{even}^\pm(x,h;\tilde y)w_\mathrm{odd}^\pm(x,h;\tilde x)\big).
\end{multline*}
Evaluating at $x=\tilde x$ gives
\begin{equation}\label{Wronskian2}
\mathcal W(u^\pm(x;x_0,\tilde x),u^\pm(x;x_0,\tilde y))
=-4i(-1)^{(1\pm 1)/2}e^{\pm2 z(\tilde x;x_0)/h}
w_\mathrm{odd}^\pm(\tilde x,h;\tilde y).
\end{equation}

\subsection{Stokes geometry}\label{ss:stokesgeometry}
We now describe the configuration of Stokes lines for single-lobe and double-lobe potentials.
\subsubsection{Single-lobe potentials} Suppose that $V$ is a single-lobe potential near $\mu_0$ and let $\mu\in B_\ve(\mu_0)$. Fix determinations of $H(z(x))$ given by \eqref{defQ} and of
\begin{equation}\label{eq:zintopbottom}
z(x;\alpha_\bullet)=z(x;\alpha_\bullet(\mu),\mu)=i\int_{\alpha_\bullet}^x(V(t)^2-\mu^2)^{1/2}\,dt,\quad \bullet=l,r,
\end{equation}
by picking branches so that $H(z(x))>0$ and $(V(x)^2-\mu^2)^{1/2}>0$ when $\alpha_l<x<\alpha_r$ and $\mu\in\R$. Note that this is in accordance with \eqref{actionintegralsimplelobe}.
The Stokes lines (level curves of $t\mapsto \re z(t;\alpha_\bullet)$) are then found by taking the union of
\begin{equation}\label{eq:defstokeslines}
	\Big\{ x \in D :\im \int_{x_0}^{x}\sqrt{V(t)^{2}-\mu^{2}}\,dt = {\rm const.} \Big\}
\end{equation}
for $x_0=\alpha_l,\alpha_r$. When $\mu$ is real it is known that there are three Stokes lines emanating from $\alpha_l\in\R$ having arguments $0, 2\pi/3,4\pi/3$, while the Stokes lines emanating from $\alpha_r\in\R$ have arguments $\pi/3, \pi,5\pi/3$, see G{\'e}rard and Grigis \cite{gerard1988precise}. We define the Riemann surfaces of $z(x)$ and $H(z(x))$ by introducing branch cuts along the Stokes line with argument $2\pi/3$ at $\alpha_l$ and the Stokes line with argument $5\pi/3$ at $\alpha_r$. For real $\mu\in B_\ve(\mu_0)$ there is a bounded Stokes line lying on $\R$ starting at $\alpha_l$ and ending at $\alpha_r$. Hence, the Stokes lines separate the complex domain $D$ into four sectors (called {\em Stokes regions}). In the top and bottom sectors the function $z(x)$ takes the form \eqref{eq:zintopbottom}. By continuing the chosen determination of $z(x)$ through rotation clockwise around the turning points (thus avoiding the branch cuts) it is easy to see that 
\begin{equation}\label{eq:zinleftright}
z(x;\alpha_\bullet)=\int_{\alpha_\bullet}^{x}(\mu^2-V(t)^2)^{1/2}\,dt
\end{equation}
for $x$ belonging to the left and right sector when $\bullet=l$ and $\bullet=r$, respectively. 
For general $\mu\in B_\ve(\mu_0)$ the picture is slightly perturbed; as $i\mu$ is rotated off the imaginary axis $\alpha_l$ and $\alpha_r$ start migrating in opposite directions along paths in the upper and lower half plane, and the bounded Stokes line connecting $\alpha_l$ and $\alpha_r$ is broken into two unbounded curves, see Figure \ref{fig:stokes_config_single}. (We refer to \cite{MR1635811} for a detailed explanation of this phenomenon.) However, for small $\ve$ the arguments of the Stokes lines at the turning points are almost unchanged so for $\mu\in B_\ve(\mu_0)$ we may still place branch cuts as described above. Note that there are now three Stokes regions around the left turning point and three around the right, and \eqref{eq:zinleftright} is still valid if interpreted in this sense. However, we will avoid introducing notation for the different Stokes regions, and simply say (informally) that $x$ is {\it near the lobe} if $x$ is not in the Stokes region to the left of $\alpha_l$ or to the right of $\alpha_r$. We also remark that if $x_0(\mu)$ is a turning point satisfying $V(x_0(\mu))=\mu$, then $x_0(-\mu)$ is also a solution to $V(x)^2-\mu^2=0$; hence the original Stokes configuration is reached again already when $i\mu$ has traversed half a circuit around the origin, see the left panel of Figure \ref{fig:migration} below.

\begin{figure}[!t]
\centering
\includegraphics[scale=.9]{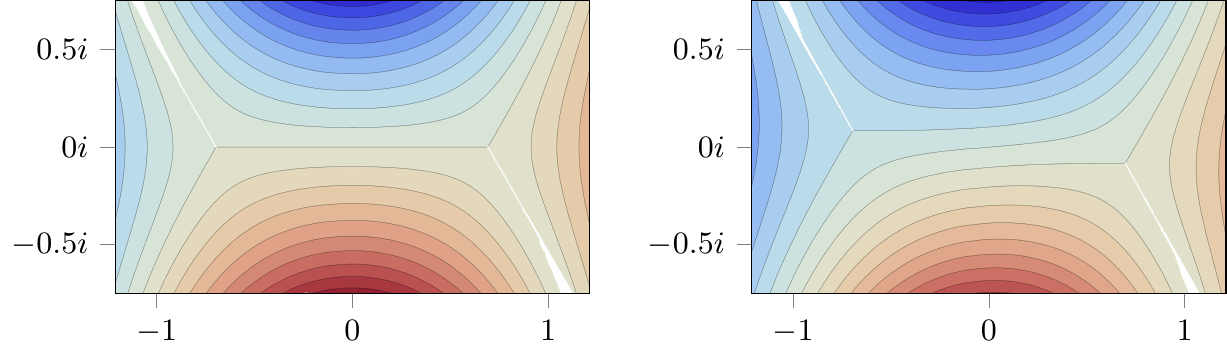}
\caption{\label{fig:stokes_config_single} 
Configuration of Stokes lines in the complex plane for the single-lobe potential $V(x)=\frac{1}{4}\sech(x)$ near $\mu_0=0.2$. The left panel describes the situation for $\mu=\mu_0$ and the right panel when $\mu=0.2+0.01i$ has been perturbed to have small positive imaginary part. The panels show the increasing value of $\re z(x;\alpha_r(\mu))$ as one travels from blue toward red regions. Here $\alpha_r(\mu)$ is the turning point on the right, so as indicated $\re z(x)$ is zero along the Stokes lines emanating from $\alpha_r$. Branch cuts are located along (the curved edges of) the white regions.}
\end{figure}

In Figure \ref{fig:stokes_config_single} we have also indicated that $\re z(x)$ increases as one travels from top to bottom and left to right, while not passing through a branch cut. This is realized in the following way: For $x$ in the regions between turning points we have by \eqref{eq:zintopbottom} and Taylor's formula that
\begin{equation}\label{eq:Taylor1}
z(x;\alpha_\bullet)-z(x_0;\alpha_\bullet)=i(x-x_0)(V(x_0)^2-\mu^2)^{1/2}(1+g_1(x)),
\end{equation}
where $g_1$ is analytic and $g_1(x_0)=0$. Since $V(x_0)>\mu_0$ if $\alpha_l(\mu_0)<x_0<\alpha_r(\mu_0)$ we see by picking $x_0$ real that the square root is approximately real when $\mu\in B_\ve(\mu_0)$, so $\re z(x)$ increases as $\im x$ decreases.
On the other hand, for $x$ in the Stokes region left of $\alpha_l$ or right of $\alpha_r$ we have by \eqref{eq:zinleftright} and Taylor's formula that
\begin{equation}\label{eq:Taylor2}
z(x;\alpha_\bullet)-z(x_0;\alpha_\bullet)=(x-x_0)(\mu^2-V(x_0)^2)^{1/2}(1+g_2(x)),
\end{equation}
where $g_2$ is analytic and $g_2(x_0)=0$. By picking $x_0\in D\cap\R$ with $\lvert\re x_0\rvert\gg1$ we have $V(x_0)\approx0$ showing that $\re z(x)$ increases as $\re x$ increases. This also shows that $\re z(x)$ is constant along lines which are essentially vertical near $\R$ when $\lvert\re x\rvert$ is large.
\subsubsection{Double-lobe potentials}
Suppose now that $V$ is a double-lobe potential near $\mu_0$ and let $\mu\in B_\ve(\mu_0)$.
Again, fix determinations of $H(z(x))$ and $z(x)$ in accordance with \eqref{actionintegraldoublelobe}--\eqref{actionintegralJ}; the obtained configuration of Stokes lines will essentially be two side-by-side copies of the configuration for single-lobe potentials with an appropriate gluing in the region between the two middle turning points $\beta_l$ and $\beta_r$.

\begin{figure}[!t]
\includegraphics[scale=.9]{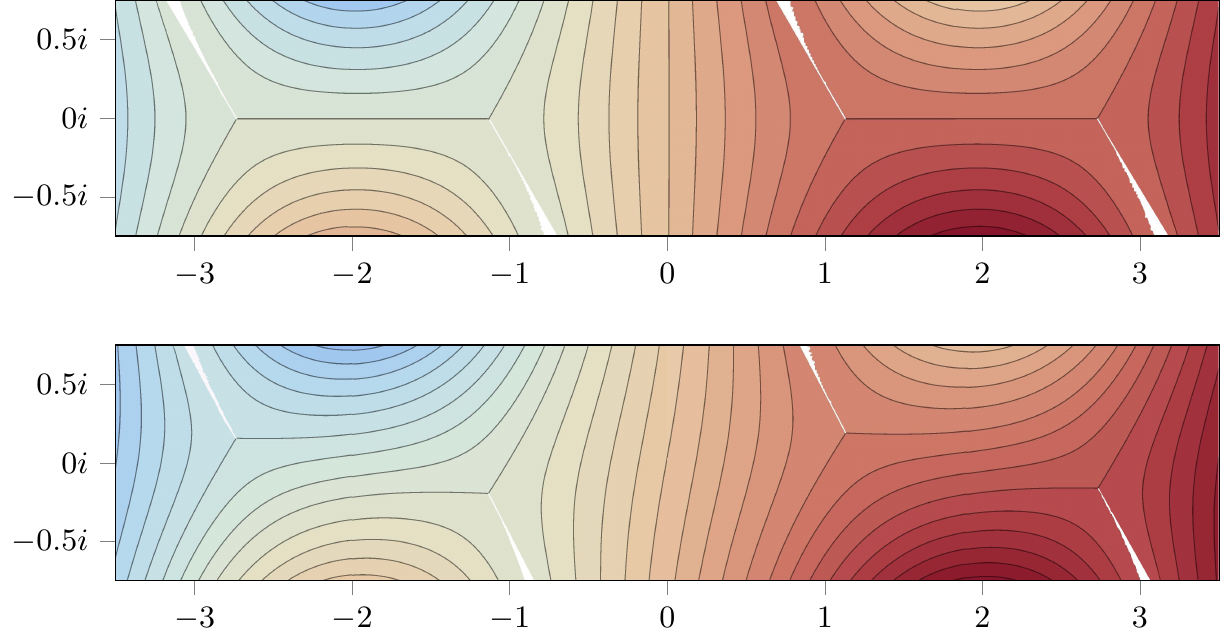}
\caption{\label{fig:stokes_config_double} 
Configuration of Stokes lines in the complex plane for the double-lobe potential $V(x)=\frac{1}{4}(\sech(x-2)+\sech(x+2))$ near $\mu_0=0.2$. The top panel describes the situation for $\mu=\mu_0$ and the bottom panel when $\mu=0.2+0.02i$ has been perturbed to have small positive imaginary part. The panels show the increasing value of $\re z(x;\beta_l(\mu))$ as one travels from blue toward red regions. Here $\beta_l(\mu)$ is the second turning point from the left, so as indicated $\re z(x)$ is zero along the Stokes lines emanating from $\beta_l$. Branch cuts are located along (the curved edges of) the white regions.}
\end{figure}

Indeed, the Stokes lines are given by the union of \eqref{eq:defstokeslines} for $x_0=\alpha_l,\beta_l,\beta_r,\alpha_r$. When $\mu$ is real there are three Stokes lines emanating from $\alpha_l$ and three from $\beta_r$ having arguments $0, 2\pi/3,4\pi/3$, while the Stokes lines emanating from $\beta_l,\alpha_r\in\R$ have arguments $\pi/3, \pi,5\pi/3$, see G{\'e}rard and Grigis \cite{gerard1988precise}. As $i\mu$ is rotated off the imaginary axis the turning points start migrating in alternating, opposing directions along paths in the upper and lower half plane, so that $\alpha_l$ moves in the direction opposite from $\beta_l$ but similar to $\beta_r$. We place branch cuts along the Stokes lines which for real $\mu$ have arguments $2\pi/3$ at $\alpha_l,\beta_r$ and the Stokes lines with arguments $5\pi/3$ at $\beta_l,\alpha_r$. Performing the same analysis as above shows that in the sectors to the left of $\alpha_l$ and to the right of $\alpha_r$, and in the intersection of the sectors to the right of $\beta_l$ and to the left of $\beta_r$ (i.e., between $\beta_l$ and $\beta_r$), $z(x;\alpha_\bullet)$ takes the form \eqref{eq:zinleftright}.  When $x$ is in the other sectors (between $\alpha_l$ and $\beta_l$ or between $\beta_r$ and $\alpha_r$), $z(x;\alpha_\bullet)$ is given by \eqref{eq:zintopbottom}, and as for single-lobe potentials we shall informally say that $x$ is {\it near the lobes} in this case. Using Taylor's formula as in \eqref{eq:Taylor1}--\eqref{eq:Taylor2} then shows that $\re z(x)$ increases as one travels from top to bottom and left to right, while not passing through a branch cut, see Figure \ref{fig:stokes_config_double}. The right panel of Figure \ref{fig:migration} shows an example of how the turning points of a double-lobe potential migrate as $i\mu$ is rotated off the imaginary axis.

\begin{figure}[!t]
\centering
\includegraphics[scale=.9]{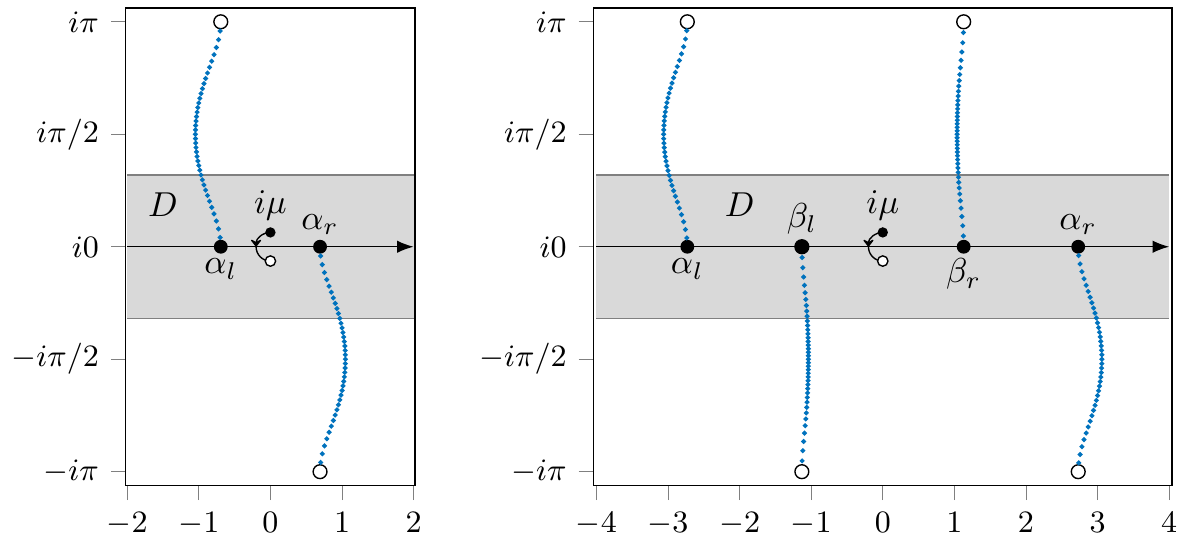}
\caption{The migration paths of turning points (solutions to $V(x)^2-\mu^2=0$) of the potentials in Figure \ref{fig:stokes_config_single} (left) and Figure \ref{fig:stokes_config_double} (right) as $i\mu$ is rotated $\pi$ radians in the positive direction from the starting value $\mu=0.2$ until $\mu=-0.2$ when the original Stokes geometry is recovered. Black dots and circles mark starting and finishing locations, respectively. Rotation in the opposite direction reverses the direction of migration. Note that since $\sech(x+i\pi)=-\sech(x)$ the turning points appear periodically in $\C$ with complex period $i\pi$ for both potentials. Examples of the domain $D$ (gray) are shown to indicate that only small rotations of $i\mu$ are of interest for the problem under consideration here.\label{fig:migration}}
\end{figure}

\subsection{The Riemann surface}\label{ss:riemann}
Let $\mathcal R(x_0,\theta)$ denote the operator acting through rotation around $x_0$ by $\theta$ radians, so that, e.g., $\mathcal R(0,\theta)x=e^{i\theta}x$. Since $V-\mu$ is analytic and $V(\alpha_l)-\mu=0$ it follows that
\begin{equation*}
V(\mathcal R(\alpha_l,2\pi k)t)-\mu=e^{2i\pi k}(V(t)-\mu),\quad k\in\Z,
\end{equation*}
i.e., when $t$ is rotated $2\pi k$ radians anticlockwise around $\alpha_l$ then $V(t)-\mu$ is rotated $2\pi k$ radians anticlockwise around the origin. (Negative $k$ results in clockwise rotation by $2\pi|k|$ radians.)
We of course have similar behavior near the other turning points of the same type, as well as for $V+\mu$ in the case when e.g.~$V(\beta_r)+\mu=0$.

\begin{dfn}\label{def:sheets}
Suppose that $V$ is a single-lobe (double-lobe) potential near $\mu_0$ and
let $y$ be a point in the upper half plane with $\re \alpha_l<\re y<\re\alpha_r$ ($\re \alpha_l<\re y<\re\beta_l$).
The point over $y$ that is obtained when rotating $y$ anticlockwise once around $\alpha_l$ will be denoted by $\hat y$, i.e.,
$$
\hat y=\mathcal R({\alpha_l},2\pi)y.
$$
More generally, the sheet reached (from the usual sheet) by entering the cut starting at $\alpha_l$ {\it from the right} will be referred to as the $\hat x$-sheet. 
The point over $y$ that is obtained when rotating $y$ clockwise once around $\alpha_l$ will be denoted by $\check y$, i.e.,
$$
\check y=\mathcal R(\alpha_l,-2\pi)y.
$$
The sheet reached (from the usual sheet) by entering the cut starting at $\alpha_l$ {\it from the left} will be referred to as the $\check x$-sheet. 
\end{dfn}

Note that this definition is in accordance with \cite[Definition 5.2]{fujiie2018quantization}. When winding this way around a turning point we always assume that the path is appropriately deformed so that it is not obstructed by other branch cuts.
Informally, we think of $\hat x$ as lying in the sheet ``above'' the usual sheet, 
and $\check x$ as lying in the sheet ``below'' the usual sheet. 
It is straightforward to check that the $\hat x$-sheet is also reached (from the usual sheet) whenever we rotate anticlockwise once around the other zeros of $V-\mu$ (i.e., around $\beta_l$, $\beta_r$ and $\alpha_r$ if $V(\beta_l)=V(\beta_r)$, 
and around $\beta_l$ if $V(\beta_l)=-V(\beta_r)$). 
Similarly, the $\check x$-sheet is reached (from the usual sheet) by rotating clockwise once around zeros of $V-\mu$. The directions are reversed when rotating around zeros of $V+\mu$, i.e., when rotating around $\beta_r$ and $\alpha_r$ if $V(\beta_l)=-V(\beta_r)$. For a proof of these facts we refer to \cite[Lemma 5.3]{fujiie2018quantization}.
We also record the following identities describing how WKB solutions are transformed when switching sheets.

\begin{lem}\label{rewritinglemma}\cite[Lemma 5.4]{fujiie2018quantization}
Let $\hat x$ and $\check x$ be defined as above and in accordance with Definition \ref{def:sheets}. Let $x_0$ be any of the turning points $\alpha_l,\beta_l,\beta_r,\alpha_r$, and let $y$ be an amplitude base point.
Then
\begin{align*}
u^\pm(x;x_0,y)=-iu^\mp(\hat x;x_0,\hat y)=iu^\mp(\check x;x_0,\check y).
\end{align*}
\end{lem}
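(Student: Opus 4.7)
The plan is to verify each identity by substituting the explicit representation \eqref{exactWKB} of the exact WKB solution into both sides and tracking how the ingredients $z(x)$, $H(z(x))$, $Q(z(x))$, and $w^\pm(x,h;y)$ transform when $x$ and $y$ are analytically continued once around the turning point $\alpha_l$ (or, equivalently by the discussion in \S\ref{ss:riemann}, around another branch point with the appropriate orientation). Since both sides are solutions of the system \eqref{system} as functions of $x$, it suffices to match them as formulas, ingredient by ingredient.

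For the phase and matrix factors, a small-arc estimate --- the integral of $\sqrt{V(t)^2-\mu^2}\,dt$ over the circle $\lvert t-\alpha_l\rvert=\varepsilon$ is $O(\varepsilon^{3/2})$ and hence vanishes as $\varepsilon\to 0$ --- combined with the sign flip of $\sqrt{V^2-\mu^2}$ across the cut yields $z(\hat x)=-z(x)$, while the analogous analysis of the quarter-power in $H=((V+\mu)/(V-\mu))^{1/4}$ gives $H(z(\hat x))=-iH(z(x))$ (and $+iH(z(x))$ in the clockwise case). A short linear-algebra computation from \eqref{defQ} then produces the key identity
\[
Q(z(\hat x))=i\,Q(z(x))\,\sigma,\qquad \sigma=\begin{pmatrix}0&1\\1&0\end{pmatrix},
\]
whose column-swap effect is precisely what interchanges $u^+$ and $u^-$. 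For the amplitude vectors, writing $\hat z=-z$ and $\hat H=-iH$ one computes $d\hat H/d\hat z=iH'(z)$ and $d/d\hat z=-d/dz$, so the transport equations \eqref{transportodd}--\eqref{transporteven} for $w^\pm$ on the $\hat x$-sheet transform into exactly those for $w^\mp$ on the primary sheet; the initial data $w^\pm_0\equiv 1$ and $w^\pm_n(\hat y)=0$ for $n\ge 1$ correspond to the defining conditions for $w^\mp$ at $y$, and an induction on $n$ yields $w^\pm(\hat x,h;\hat y)=w^\mp(x,h;y)$. Assembling these pieces and using $\sigma^2=I$ gives, for example,
\[
u^-(\hat x;x_0,\hat y)=e^{z(x)/h}\begin{pmatrix*}[r]1&1\\-1&1\end{pmatrix*}\cdot iQ(z(x))\sigma\,w^+(x,h;y)=i\,u^+(x;x_0,y),
\]
which rearranges into $u^+(x;x_0,y)=-iu^-(\hat x;x_0,\hat y)$. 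The opposite sign case is analogous, and the identity involving $\check x$ follows by the same computation with clockwise winding (giving $H(z(\check x))=iH(z(x))$, $Q(z(\check x))=-iQ(z(x))\sigma$, and consequently $u^\pm(x;x_0,y)=iu^\mp(\check x;x_0,\check y)$).

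The main obstacle is the careful bookkeeping of signs and branches when the phase base point $x_0$ ranges over the different turning points: some are zeros of $V-\mu$ and some of $V+\mu$, reversing the direction of rotation needed to reach the $\hat x$-sheet (see the remarks after Definition \ref{def:sheets}), so one has to verify that the above transformation rules produce the same matrix identity regardless of which branch point is used. The termwise analytic continuation of the formal series defining $w^\pm$ must also be justified throughout the relevant Stokes regions. The overall structure of the argument then follows that of \cite[Lemma 5.4]{fujiie2018quantization}.
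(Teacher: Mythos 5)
The paper does not prove this lemma itself but cites \cite[Lemma 5.4]{fujiie2018quantization}; your argument is the natural direct verification from \eqref{exactWKB} and is essentially the calculation one would write. The three transformation rules $z(\hat x)=-z(x)$, $H(z(\hat x))=-iH(z(x))$, $Q(z(\hat x))=iQ(z(x))\sigma$ are correct, and the chain rule computation converting the transport equations \eqref{transportodd}--\eqref{transporteven} for $w^\pm$ on the $\hat x$-sheet into those for $w^\mp$ on the primary sheet, together with the matching initial data at $\hat y$, indeed gives $w^\pm(\hat x,h;\hat y)=w^\mp(x,h;y)$; assembling these and using $\sigma^2=I$ yields the stated factors $\mp i$. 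One place to tighten the point you already flagged as ``the main obstacle'': the small-arc argument gives $z(\hat x;x_0)=-z(x;x_0)$ cleanly only when the loop realizing the sheet change encircles the phase base point $x_0$ itself. If $\hat x$ is instead reached by winding around a different turning point $a\neq x_0$ -- which still lands on the $\hat x$-sheet and gives the same transformation of $H$ -- the primitive $z(\cdot;x_0)$ picks up a period, namely $z(\hat x;x_0)=-z(x;x_0)+2z(a;x_0)$, and the exponential prefactor $e^{\pm z/h}$ would then acquire a spurious factor unless $z(a;x_0)=0$. So for the lemma to hold verbatim the loop should be taken around $x_0$, anticlockwise when $x_0$ is a zero of $V-\mu$ and clockwise when it is a zero of $V+\mu$, consistent with the remarks after Definition \ref{def:sheets}. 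With that understood your proof goes through in all four cases.
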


\subsection{Symmetry}\label{ss:symmetry}

For constants $c=c(\lambda)$ depending on the spectral parameter $\lambda$ we shall simply write $c(\mu)$ with the convention that $\mu$ is always defined via $\lambda=i\mu$. We then write $c(\bar\mu)$ to represent the value of $c$ at the reflection of $\lambda$ in the imaginary axis, i.e., at $i\bar\mu=-\bar\lambda$. We let
$$
c^\myconj(\mu)=\overline{c(\bar\mu)}.
$$
Similarly, for functions $f(x)=f(x;\lambda)$ we simply write $f(x;\mu)$, and let $f^\myconj$ denote the function
$$
f^\myconj(x;\mu)=\overline{f(\bar x;\bar\mu)}.
$$
For a WKB solution $u(x;x_0(\mu),y,\mu)$ depending also on phase base point $x_0(\mu)$ and amplitude base point $y$ independent of $\mu$, we thus have
$$
u^\myconj(x;x_0(\mu),y,\mu)=\overline{u(\bar x;x_0(\bar\mu),y,\bar\mu)}.
$$

Recall that we fixed a determination of $H(z(x))$ so that if $\mu\in\R$ then at $\tilde x=(\alpha_l+\beta_l)/2\in\R$ we have
$$
H(z(\tilde x))=\bigg(\frac{V(\tilde x)+\mu}{V(\tilde x)-\mu}\bigg)^{1/4}>0.
$$
It is straightforward to check that for $\mu\in\R$, this determination implies that
\begin{align}\label{eq:nearleftlobe}
H(z(x))&\in\R_+\quad\text{when }\alpha_l<x<\beta_l,\\
H(z(x))&\in e^{i\pi/4}\R_+\quad\text{when } x<\alpha_l,\quad \beta_l<x<\beta_r,\quad \alpha_r<x,
\label{eq:awayfromlobes}
\end{align}
while 
\begin{equation}\label{factori}
H(z(x))\in\begin{cases}\R_+& \text{when $V(\beta_l)=V(\beta_r)>0$}\\
i\R_+&\text{when $V(\beta_l)=-V(\beta_r)>0$}\end{cases}
\quad\text{and }\beta_r<x<\alpha_r.
\end{equation}
When $V(\beta_l)=V(\beta_r)>0$ this is in accordance with the fact that
\begin{equation}\label{HA1}
H(z(-x))=H(z(x))\quad \text{if $V(x) = V(-x)$ for $x\in\R$.}
\end{equation}
When $V(x) = V(-x)$ for $x\in\R$, we have
$$
H(z(-x))=\bigg(\frac{-(V(x)-\mu)}{-(V(x)+\mu)}\bigg)^{1/4}=c/H(x)
$$
for some constant $c$. Using the determination above we find that for $\mu\in\R$ and $x<\alpha_l$,
$$
e^{i\pi/4}\R_+\ni H(-x)=c/H(x) \in ce^{-i\pi/4}\R_+
$$
which implies that $c=i$. The same conclusion can also be drawn from the observation that if $\mu\in\R$ and $\alpha_l<x<\beta_l$ then 
$$
i\R_+\ni H(-x)=c/H(x) \in c\R_+
$$
by \eqref{factori}, again showing that $c=i$, that is,
\begin{equation}\label{HA2}
H(z(-x))=i/H(z(x))\quad \text{if $V(x) = -V(-x)$ for $x\in\R$.}
\end{equation}
These observations will be used to prove two symmetry properties: one with respect to reflection of the spectral parameter in the imaginary axis, and one with respect to parity.

\begin{prop}\label{proposition:sym1}
Let $\mu\in B_\ve(\mu_0)$ and let $x_0(\mu)\in\C$ be a solution to $V(x)^2-\mu^2=0$. Then $x_0(\mu)=\overline{x_0(\bar\mu)}$. Let $y$ be an amplitude base point independent of $\mu$. 
If $V$ is a single-lobe, or a double-lobe with $V(\beta_l)=V(\beta_r)$ then near the lobe(s) we have
\begin{equation}\label{eq:leftlobe}
(u^\pm)^\myconj(x;\alpha_\bullet(\mu),y,\mu)= u^\mp(x;\alpha_\bullet(\mu),\bar y,\mu).
\end{equation}
If $V$ is a double-lobe with $V(\beta_l)=-V(\beta_r)$ then \eqref{eq:leftlobe} holds near the left lobe while
\begin{equation}\label{eq:rightlobe}
(u^\pm)^\myconj(x;\alpha_\bullet(\mu),y,\mu)=- u^\mp(x;\alpha_\bullet(\mu),\bar y,\mu)
\end{equation}
near the right lobe. 
In the Stokes region to the left of $\alpha_l$ or to the right of $\alpha_r$,
$$
(u^\pm)^\myconj(x;\alpha_\bullet(\mu),y,\mu)=iu^\pm(x;\alpha_\bullet(\mu),\bar y,\mu).
$$
\end{prop}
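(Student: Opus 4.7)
The plan is to analyze each ingredient of the factored representation \eqref{exactWKB} of $u^\pm$ under the involution $\myconj$, and combine them. First, the claim $\overline{x_0(\bar\mu)}=x_0(\mu)$ for a turning point $x_0=\alpha_\bullet$ is immediate: since $V$ is real on $\R$, conjugating $V(x_0)^2=\mu^2$ gives $V(\overline{x_0})^2=\bar\mu^2$, and by continuity for $\mu$ close to $\mu_0\in\R$ the solution $\alpha_\bullet(\bar\mu)$ coincides with $\overline{\alpha_\bullet(\mu)}$.

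I would then compute $(e^{\pm z/h})^\myconj$ and $Q^\myconj$. Starting from \eqref{eq:zintopbottom} and \eqref{eq:zinleftright}, a change of variables in the conjugated integral together with the reality of $V$ on $\R$ gives
$$
\overline{z(\bar x;\alpha_\bullet(\bar\mu),\bar\mu)}=-z(x;\alpha_\bullet(\mu),\mu)
$$
when $x$ is near the lobe(s), but with a $+$ sign in the Stokes region to the left of $\alpha_l$ or right of $\alpha_r$. The sign difference reflects the fact that $(V^2-\mu^2)^{1/2}$ is real on $\R$ near a lobe but purely imaginary on $\R$ outside it. Combined with the branch determinations \eqref{eq:nearleftlobe}--\eqref{factori} of $H$, a direct matrix computation with \eqref{defQ} yields $Q^\myconj=QN$ near the left lobe (and near the right lobe when $V(\beta_l)=V(\beta_r)$), $Q^\myconj=-QN$ near the right lobe when $V(\beta_l)=-V(\beta_r)$, and $Q^\myconj=iQ$ in the outside Stokes regions, where $N=\bigl(\begin{smallmatrix}0&1\\1&0\end{smallmatrix}\bigr)$.

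The delicate step is the transformation of the amplitude vectors $w^\pm$. The key observation is that the coefficient $(H'/H)(z)$ in the transport equations \eqref{transportodd}--\eqref{transporteven} is purely imaginary on the lobe (where $H$ is real and $z$ is purely imaginary) but real in the outside Stokes regions (where $H\in e^{i\pi/4}\R_+$ and $z$ is real). Taking complex conjugates of the transport equations and using that $\overline z=-z$ on the lobe versus $\overline z=z$ in the outside, the operator $d/dz\pm 2/h$ gets mapped on the lobe to $d/dz\mp 2/h$, yielding the equations for $w^\mp$, whereas in the outside it is preserved, yielding the equations for $w^\pm$. Matching the vanishing initial conditions at the transformed base point (where $y$ becomes $\bar y$) then gives
$$
\overline{w^\pm(\bar x;y,\bar\mu)}=w^\mp(x;\bar y,\mu)\quad\text{near the lobe(s),}
$$
and $\overline{w^\pm(\bar x;y,\bar\mu)}=w^\pm(x;\bar y,\mu)$ in the outside Stokes regions.

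Assembling the three factors and using the identity $N\cdot N^{(1\pm 1)/2}=N^{(1\mp 1)/2}$ then yields each case of the proposition: the type swap $(u^\pm)^\myconj=\pm u^\mp(\cdot;\bar y,\cdot)$ near a lobe (with sign coming from $Q^\myconj=\pm QN$), and $(u^\pm)^\myconj=iu^\pm(\cdot;\bar y,\cdot)$ in the outside (with the factor $i$ coming from $Q^\myconj=iQ$). The main obstacle is the careful bookkeeping of branches and signs across the different Stokes regions, in particular establishing rigorously that the amplitude identity switches character (type-swapping on a lobe, type-preserving outside) as a direct consequence of the branch behavior of the phase function $z$ and of $H$. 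A natural way to handle this is to verify each identity first for real $(\mu,x,y)$ in the specified region---where all branches are unambiguous---and then extend by analyticity in $(\mu,x,y)$ to the complex neighborhoods where the WKB solutions are defined.
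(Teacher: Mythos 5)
Your proposal follows essentially the same route as the paper's proof: establish the reality of the turning points from $\overline{V(\bar x)}=V(x)$, compute the behavior of $z$, $H$ and hence $Q$ under $\myconj$ using the branch determinations \eqref{eq:nearleftlobe}--\eqref{factori}, transfer these to the amplitude vectors via the transport equations, and assemble. Your observation that $H'/H$ is purely imaginary on the lobe and real outside, together with the identity $N\cdot N^{(1\pm1)/2}=N^{(1\mp1)/2}$, just makes explicit the ``inspection'' steps that the paper leaves implicit.
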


\begin{proof}
Since $V$ is real-analytic we have $\overline{V(\bar x)}=V(x)$, which implies that
$V(\overline{\alpha_l(\mu)})-\bar\mu=0$.
Since $\alpha_l(\bar\mu)$ also satisfies this equation it follows that $\overline{\alpha_l(\mu)}=\alpha_l(\bar\mu)$, for $\alpha_l(\mu_0)\in\R$ and the turning points depend continuously on $\mu\in B_\ve(\mu_0)$. Hence $\alpha_l^\ast(\mu)=\alpha_l(\mu)$. The same arguments show that $x_0^\ast(\mu)=x_0(\mu)$ when $x_0$ is any of the other three turning points.

Next, if $V$ is a single-lobe and $x$ lies in the domain between the turning points, or if $V$ is a double-lobe and $x$ lies in either the domain between the left pair or in the domain between the right pair of turning points, then
$z(x,\mu)=i\int(V^2-\mu^2)^{1/2}dt$ with real integrand when $x,\mu\in\R$.
It is then easy to check that $\overline{z(\bar x,\bar \mu)}=-z(x,\mu)$. (In particular, when $x$ and $\mu$ are real, $z(x,\mu)$ is purely imaginary, as expected.) If $V$ is a single-lobe or a double-lobe with $V(\beta_l)=\pm V(\beta_r)$ then, using \eqref{eq:nearleftlobe} or \eqref{factori}, one checks that $\overline{H(z(\bar x,\bar\mu))}=H(z(x,\mu))$ near the left lobe and $\overline{H(z(\bar x,\bar\mu))}=\pm H(z(x,\mu))$ near the right lobe, which implies that 
$$
\overline{Q(z(\bar x,\bar\mu))}=c\cdot Q(z(x,\mu))\begin{pmatrix} 0 & 1\\ 1&0 \end{pmatrix},
$$
with $c=1$ near the left lobe, and $c=\pm 1$ near the right lobe, with sign determined according to $V(\beta_l)=\pm V(\beta_r)$.
Since $\overline{z'(\bar x,\bar\mu)}=-z'(x,\mu)$, inspection of the governing equations for the amplitude function $w^\pm(x,h;y,\mu)$ shows that
\begin{equation}\label{eq:symamplitude}
\overline{w^\pm(\bar x,h;y,\bar\mu)}=
w^\mp(x,h;\bar y,\mu)
\end{equation}
which gives \eqref{eq:leftlobe}--\eqref{eq:rightlobe}.

Finally, if $x$ lies in the domain left of $\alpha_l$ or right of $\alpha_r$ then
$z(x,\mu)=\int(\mu^2-V^2)^{1/2}dt$ with real integrand when $x,\mu\in\R$,
so $\overline{z(\bar x,\bar \mu)}=z(x,\mu)$. Using \eqref{eq:awayfromlobes} one checks that $\overline{H(z(\bar x,\bar\mu))}=-iH(z(x,\mu))$ and $\overline{Q(z(\bar x,\bar\mu))}=i Q(z(x,\mu))$.
Inspection of the governing equations for the amplitude function $w^\pm(x,h;y,\mu)$  shows that $\overline{w^\pm(\bar x,h;y,\bar\mu)}=
w^\pm(x,h;\bar y,\mu)$. This proves the last statement of the proposition and the proof is complete.
\end{proof}

\begin{prop}\label{proposition:sym5}
Let $\mu\in B_\ve(\mu_0)$ and let $x_0(\mu)\in\C$ be a solution to $V(x)^2-\mu^2=0$. 
If $V(x) = V(-x)$ for $x \in \R$, then
$$
u^\pm(-x;x_0(\mu),y,\mu)=-\begin{pmatrix*}[r] 0 & 1\\ 1&0 \end{pmatrix*}u^\mp(x;-x_0(\mu),- y,\mu).
$$
If $V(x) = -V(-x)$ for $x \in \R$, then
$$
u^\pm(-x;x_0(\mu),y,\mu)=\pm \begin{pmatrix*}[r] 0 & -1\\ 1&0\end{pmatrix*}u^\mp(x;-x_0(\mu),-y,\mu).
$$
\end{prop}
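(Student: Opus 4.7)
The plan is to combine two ingredients: the algebraic behavior of the coefficient matrix $M(x,\lambda)$ of the Dirac system \eqref{system} under $x\mapsto -x$, and uniqueness for the first-order linear ODE \eqref{system}. In each parity case I will exhibit both sides of the claimed identity as Dirac solutions and then match them at the single point $\tilde x=y$, where the amplitudes reduce to $(1,0)^T$ by the initial conditions in \eqref{transportodd}--\eqref{transporteven}; uniqueness will then force equality everywhere.

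A preliminary observation valid in both cases is the phase identity $z(-x;x_0)=-z(x;-x_0)$. Since $V(-t)^2=V(t)^2$ regardless of whether $V$ is even or odd, the substitution $t\mapsto -s$ in $z(-x;x_0)=i\int_{x_0}^{-x}(V(t)^2+\lambda^2)^{1/2}\,dt$ gives $-z(x;-x_0)$. In particular, $e^{\pm z(-x;x_0)/h}=e^{\mp z(x;-x_0)/h}$ matches the scalar prefactor appearing on the right-hand side of the identity to the corresponding prefactor on the left.

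For the even case, a direct calculation with $\sigma_1=\begin{pmatrix}0&1\\1&0\end{pmatrix}$ shows $\sigma_1 M(x,\lambda)\sigma_1=-M(x,\lambda)$. Combined with $M(-\tilde x,\lambda)=M(\tilde x,\lambda)$ (valid for even $V$), this implies that if $u$ solves \eqref{system}, so does $\tilde v(\tilde x)=\sigma_1 u(-\tilde x)$. Applied to $u=u^\mp(\,\cdot\,;-x_0,-y)$, both $-\sigma_1 u^\mp(-\tilde x;-x_0,-y)$ and $u^\pm(\tilde x;x_0,y)$ are Dirac solutions, and by uniqueness it therefore suffices to check equality at $\tilde x=y$. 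There $w^\pm(y,h;y)=(1,0)^T$ and $w^\mp(-y,h;-y)=(1,0)^T$ by the initial conditions, so propagating the even-case relation $H(z(-y))=H(z(y))$ from \eqref{HA1} through $Q$ and performing a short matrix multiplication involving $P=\begin{pmatrix}1&1\\-1&1\end{pmatrix}$ and the matrices from \eqref{exactWKB} that switch between $+$ and $-$ type confirms the match.

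The odd case proceeds analogously, but with $D=\begin{pmatrix}0&1\\-1&0\end{pmatrix}$ in place of $\sigma_1$: one checks $D M(-\tilde x,\lambda)D^{-1}=-M(\tilde x,\lambda)$ when $V$ is odd, so $Du^\mp(-\tilde x;-x_0,-y)$ is again a Dirac solution. The point evaluation at $\tilde x=y$ now invokes \eqref{HA2}, namely $H(z(-y))=i/H(z(y))$; the extra factor of $i$ so introduced propagates through $Q(z(-y;-x_0))$ and the subsequent multiplication by $P$ and the type-switching matrix in precisely the way required to produce the matrix $\pm\begin{pmatrix}0&-1\\1&0\end{pmatrix}$ on the right-hand side. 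The main obstacle is exactly this sign-bookkeeping: the $i$ from \eqref{HA2} touches every step of the matrix arithmetic and interacts with the choice of sign in the statement, so I would verify the two sign cases independently, evaluating each side at $\tilde x=y$ and comparing entry by entry.
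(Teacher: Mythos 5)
Your proof is correct, and it takes a genuinely different route from the paper's. The paper's own argument is a direct, symbolic transformation of the full exact WKB expression: it derives the amplitude relation $w^\pm(-x,h;y)=w^\mp(x,h;-y)$ (or the variant with $\operatorname{diag}(1,-1)$ in the odd case) from the transport equations, and then matches the exponential, $Q$, and type-switching matrix factors entry by entry. You instead observe that $\sigma_1 M(x,\lambda)\sigma_1=-M(x,\lambda)$ (and the analogous conjugation by $\begin{pmatrix*}[r]0&1\\-1&0\end{pmatrix*}$ in the odd case) lets you recognize $\sigma_1 u(-\tilde x)$ (respectively $Du(-\tilde x)$) as another Dirac solution, and then you match both sides of the claimed identity at the single point $\tilde x=y$, where the amplitude collapses to $(1,0)^T$ by the initial conditions, invoking uniqueness for the first-order linear system to conclude. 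This trades the paper's global bookkeeping of how each piece of \eqref{exactWKB} transforms for a ``soft'' linear-ODE argument plus a one-point check, which is arguably more transparent; what you lose is the explicit amplitude relation $w^\pm(-x,h;y)=w^\mp(x,h;-y)$, which the paper has available for reuse elsewhere. The one thing to make explicit in a polished write-up is the domain: the central solutions exist on a connected set symmetric under $x\mapsto -x$ containing $\pm y$ and away from the turning points, so that the uniqueness argument applies and \eqref{HA1}/\eqref{HA2} propagate by analytic continuation; with that said, your point-evaluation calculations (which I have checked in both sign cases) indeed reproduce the matrices $-\begin{pmatrix*}[r]0&1\\1&0\end{pmatrix*}$ and $\pm\begin{pmatrix*}[r]0&-1\\1&0\end{pmatrix*}$ as required.
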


\begin{proof}
Since we are only concerned with symmetry with respect to $x\mapsto -x$ we will omit $\mu$ from the notation.
If $V(x) = V(-x)$ for $x\in\R$, a change of variables shows that $z(-x,x_0)=-z(x,-x_0)$.
Also $z'(x)=z'(-x)$ and $H(z(- x))=H(z(x))$ by \eqref{HA1}.
The governing equations for the amplitude function $w^\pm(x,h;y)$  imply that
\begin{equation*}
w^\pm(- x,h;y)=
w^\mp(x,h;- y).
\end{equation*}
Noting that $Q(z(- x))=Q(z(x))$ and
$$
-\begin{pmatrix} 0 & 1\\ 1&0 \end{pmatrix}\begin{pmatrix*}[r] 1 & 1\\ -1&1 \end{pmatrix*}Q(z(x))
=\begin{pmatrix*}[r] 1 & 1\\ -1&1 \end{pmatrix*}Q(z(x))\begin{pmatrix} 0 & 1\\ 1&0 \end{pmatrix}
$$
and that squaring the right-most matrix gives the identity, we obtain the first formula.

If $V(x)=-V(-x)$ for $x\in\R$ then $z$ satisfies the same relations as above while $H(z(- x))=i/H(z(x))$ by \eqref{HA2}.  
The governing equations for $w^\pm(x,h;y,\mu)$ now give
\begin{equation*}
w^\pm(- x,h;y)=\begin{pmatrix*}[r]1&0\\0&-1\end{pmatrix*}
w^\mp(x,h;- y),
\end{equation*}
while
$$
Q(z(-x))=\begin{pmatrix}-iH(z(x))&-iH(z(x))\\-1/H(z(x))&1/H(z(x))\end{pmatrix}.
$$
Since 
$$
\begin{pmatrix}0&1\\1&0\end{pmatrix}^{(1\pm 1)/2}\begin{pmatrix*}[r]1&0\\0&-1\end{pmatrix*}
=\pm \begin{pmatrix*}[r]0&-1\\1&0\end{pmatrix*}\begin{pmatrix}0&1\\1&0\end{pmatrix}^{(1\mp 1)/2}
$$
the second formula therefore follows by checking that
$$
\begin{pmatrix*}[r]1&1\\-1&1\end{pmatrix*}Q(z(-x))\begin{pmatrix*}[r]0&-1\\1&0\end{pmatrix*}
=\begin{pmatrix*}[r]0&-1\\1&0\end{pmatrix*}\begin{pmatrix*}[r]1&1\\-1&1\end{pmatrix*}Q(z(x))
$$
with $Q(z(-x))$ described above. This straightforward verification is left to the reader.
\end{proof}

\begin{prop}\label{proposition:sym2}
Let $\mu\in B_\ve(\mu_0)$.
Then $I^\myconj=I$, $I^\myconj_\bullet=I_\bullet$ and $J^\myconj=J$. If $V(x)=\pm V(-x)$  then $I_l=I_r$.
\end{prop}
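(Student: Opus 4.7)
The plan is to prove the first claim by a single appeal to the Schwarz reflection principle and the second by a change of variable $t\mapsto -t$, exploiting the normalizations of the branches already built into the definitions \eqref{actionintegralsimplelobe}, \eqref{actionintegraldoublelobe} and \eqref{actionintegralJ}.

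\textbf{Symmetry under $\mu\mapsto\bar\mu$.} I would first argue that each of $I$, $I_l$, $I_r$, $J$ is holomorphic on $B_\ve(\mu_0)$. Indeed, the endpoints $\alpha_\bullet(\mu)$, $\beta_\bullet(\mu)$ are simple zeros of the analytic map $x\mapsto V(x)^2-\mu^2$ and hence depend analytically on $\mu$ by the implicit function theorem; moreover the integration contour can be deformed slightly off the turning points so that differentiation under the integral is legal, which shows holomorphy of each action integral. By the prescribed branch choice each of $I(\mu)$, $I_l(\mu)$, $I_r(\mu)$, $J(\mu)$ is real for real $\mu\in B_\ve(\mu_0)\cap\R$. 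A holomorphic function that is real on a real arc satisfies the Schwarz reflection identity $F(\mu)=\overline{F(\bar\mu)}$, which is precisely $F^\myconj = F$ for each of our four integrals.

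\textbf{Parity $V(x)=\pm V(-x)$.} Here the key observation is that $V(x)^2=V(-x)^2$ as analytic functions on $D\cap(-D)$, so the set $\{V(x)^2-\mu^2=0\}$ is invariant under $x\mapsto -x$. For real $\mu_0$ the four turning points are real and ordered $\alpha_l(\mu_0)<\beta_l(\mu_0)<\beta_r(\mu_0)<\alpha_r(\mu_0)$, so this symmetry forces $\alpha_r(\mu_0)=-\alpha_l(\mu_0)$ and $\beta_r(\mu_0)=-\beta_l(\mu_0)$; by analytic continuation of the turning points this persists on $B_\ve(\mu_0)$. Applying the substitution $t\mapsto -t$ in \eqref{actionintegraldoublelobe} one computes
\begin{equation*}
I_r(\mu)=\int_{\beta_r(\mu)}^{\alpha_r(\mu)}(V(t)^2-\mu^2)^{1/2}\,dt
=\int_{\alpha_l(\mu)}^{\beta_l(\mu)}(V(-s)^2-\mu^2)^{1/2}\,ds,
\end{equation*}
and $V(-s)^2=V(s)^2$ turns the right-hand side into $I_l(\mu)$. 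The branch of the square root is transported correctly because both integrals are positive for real $\mu$, so the two holomorphic functions $I_l$ and $I_r$ agree on a real arc and hence on all of $B_\ve(\mu_0)$.

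The only real subtlety is the bookkeeping of the square root branches; once one has fixed that each integrand has been chosen so that the integral is positive for real $\mu$, both the Schwarz reflection argument and the change-of-variable argument become automatic, so I expect no serious obstacle in carrying out the full proof.
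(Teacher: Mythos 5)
Your argument is correct and proves the same statement, but it takes a cleaner, slightly different route for the first claim. The paper's proof computes $I_l(\bar\mu)$ directly: it invokes Proposition~\ref{proposition:sym1} to write $\alpha_\bullet(\bar\mu)=\overline{\alpha_\bullet(\mu)}$, uses $\overline{V(\bar t)}=V(t)$ (real-analyticity of $V$), and then performs an explicit change of variable $t\mapsto\bar t$ to land on $\overline{I_l(\mu)}$. You instead observe that each action integral is holomorphic in $\mu$ and real on the real segment of $B_\ve(\mu_0)$ (by the fixed determinations), and then get $F^\myconj=F$ for all four integrals at once by Schwarz reflection. Both are legitimate: the paper's approach is more hands-on and never needs to state holomorphy of the action integrals explicitly, while yours is more conceptual and handles all four cases by a single principle, at the modest cost of verifying holomorphy (which the paper anyway uses implicitly, e.g.\ when differentiating $I_l$ in Lemma~\ref{lem:referencepoints}). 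For the parity claim your change-of-variable argument is essentially identical to the paper's.
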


\begin{proof}
We adapt the arguments in the proof of \cite[Lemma IV.2]{Hirota2017Real}.
Since 
$$
I_l(\bar\mu)=\int_{\alpha_l(\bar\mu)}^{\beta_l(\bar\mu)}(V(t)^2-\bar\mu^2)^{1/2}\de t
=\int_{\overline{\alpha_l(\mu)}}^{\overline{\beta_l(\mu)}}\overline{(V(\bar t)^2-\mu^2)^{1/2}}\de t
$$
by Proposition \ref{proposition:sym1}, a change of variables gives $I_l(\bar\mu)=\overline{I_l(\mu)}$, which proves that $I_l^\myconj=I_l$.
The same arguments show that $I^\myconj=I$, $I_r^\myconj=I_r$ and $J^\myconj=J$. If $V(x)=\pm V(-x)$ then $\alpha_l=-\alpha_r$ and $\beta_l=-\beta_r$, so the identity $I_l=I_r$ follows by a change of variables.
\end{proof}

We end this section with a result that will be used to determine the location of the reference points $\mu_k^\mathrm{sl}$ and $\mu_k^\mathrm{dl}$ mentioned in the introduction. In the statement, we let for brevity $I(\mu)$ denote either the action integral \eqref{actionintegralsimplelobe}, or one of the action integrals $I_l,I_r$ given by \eqref{actionintegraldoublelobe}. It will be convenient to allow an error term which can be made exponentially small for any fixed $h$.

\begin{lem}\label{lem:referencepoints}
Let $\mathcal I(\mu,h)$ satisfy $\mathcal I=\mathcal I^\ast$ and suppose that $\mathcal I(\mu,h)=I(\mu)+ha(\mu,h)+O(e^{-A/h})$ for any $A>0$, with $a,\partial a/\partial\mu=O(h)$ as $h\to0$, uniformly for $\mu\in B_\ve(\mu_0)$. Then there is an $h_0>0$ such that $\mu\mapsto\mathcal I(\mu,h)$ is injective in $B_\ve(\mu_0)$ for all $0<h\le h_0$. In particular, if $0<h\le h_0$ and $\mu_k(h)\in B_\ve(\mu_0)$ is a root of the equation $\mathcal I(\mu,h)=y_k(h)$ for some $y_k\in\R$, then $\mu_k\in\R$.
\end{lem}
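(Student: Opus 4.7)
The plan is to first establish that $\mu\mapsto\mathcal I(\mu,h)$ is injective on $B_\ve(\mu_0)$ for $h$ sufficiently small, and then deduce the statement about real roots from the symmetry $\mathcal I^\myconj=\mathcal I$. Granted injectivity, if $\mathcal I(\mu_k(h),h)=y_k(h)\in\R$, then applying the symmetry $\overline{\mathcal I(\bar\mu,h)}=\mathcal I(\mu,h)$ to both sides of the equation yields $\mathcal I(\overline{\mu_k(h)},h)=\overline{y_k(h)}=y_k(h)$; since $\mu_0\in\R$ the ball $B_\ve(\mu_0)$ is symmetric about $\R$, so $\overline{\mu_k(h)}\in B_\ve(\mu_0)$, and injectivity forces $\overline{\mu_k(h)}=\mu_k(h)$, i.e.\ $\mu_k\in\R$.

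For injectivity I would first verify $I'(\mu_0)\neq 0$. Differentiating the single-lobe action integral \eqref{actionintegralsimplelobe} under the integral sign (the endpoint contributions vanish since $V(t)^2-\mu^2=0$ at $t=\alpha_\bullet(\mu)$) gives
\[
I'(\mu)=-\mu\int_{\alpha_l(\mu)}^{\alpha_r(\mu)}\frac{dt}{\sqrt{V(t)^2-\mu^2}},
\]
which at $\mu=\mu_0\in(0,V_0)$ is a strictly negative real number, the integrand having integrable singularities at $\alpha_\bullet$ thanks to $V'(\alpha_\bullet)\neq 0$; the same computation handles $I_l$ and $I_r$ in the double-lobe setting. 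After a harmless shrinkage of $\ve$ I may assume $|I'(\mu)-I'(\mu_0)|\le|I'(\mu_0)|/4$ throughout $B_\ve(\mu_0)$.

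Next I would differentiate the expansion $\mathcal I=I+ha+O(e^{-A/h})$ in $\mu$. The $h\,\partial_\mu a$ term is $O(h^2)$ by hypothesis, and the exponentially small remainder is holomorphic in $\mu$, so Cauchy's integral formula on a small circle inside $B_\ve(\mu_0)$ gives an $O(e^{-A'/h})$ bound on its derivative on a slightly smaller ball (which I absorb by a further small shrinkage of $\ve$). Thus $\partial_\mu\mathcal I(\mu,h)=I'(\mu)+O(h)$ uniformly on $B_\ve(\mu_0)$, and for $0<h\le h_0$ with $h_0$ small enough,
\[
|\partial_\mu\mathcal I(\mu,h)-I'(\mu_0)|\le |I'(\mu_0)|/2,\qquad \mu\in B_\ve(\mu_0).
\]
Since $B_\ve(\mu_0)$ is convex, for $\mu_1,\mu_2\in B_\ve(\mu_0)$ the holomorphic fundamental theorem along the line segment from $\mu_2$ to $\mu_1$ gives
\[
\mathcal I(\mu_1,h)-\mathcal I(\mu_2,h)=(\mu_1-\mu_2)\int_0^1 \partial_\mu\mathcal I\bigl(\mu_2+t(\mu_1-\mu_2),h\bigr)\,dt.
\]
The integrand takes values in the closed disk $\overline{B}(I'(\mu_0),|I'(\mu_0)|/2)$, which is convex and does not contain the origin; hence so does the integral, which therefore has modulus at least $|I'(\mu_0)|/2>0$. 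This forces $\mu_1=\mu_2$ as soon as the left-hand side vanishes, giving injectivity.

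The main (modest) technical obstacle is the interplay between the Cauchy estimate used to differentiate the exponentially small remainder and the need to shrink $\ve$; however, the applications of the lemma are insensitive to such reductions in $\ve$, so this causes no difficulty.
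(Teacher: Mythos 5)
Your proof follows the same overall strategy as the paper: compute $I'(\mu_0)\ne 0$ from the explicit formula, turn this into a lower bound on the $\mu$-derivative of the leading part of $\mathcal I$, deduce injectivity from that bound via an integral along a segment, and then use $\mathcal I^\ast=\mathcal I$ to conclude that a root of a real equation must be real. The one place you depart from the paper is in the treatment of the $O(e^{-A/h})$ remainder. You differentiate it by a Cauchy estimate on a nested disk; the paper avoids differentiating it at all. It proves injectivity only for $\mathcal J=I+ha$, whose derivative is controlled directly by the hypothesis $\partial a/\partial\mu=O(h)$, and then argues that if $\mathcal I(\mu_1)=\mathcal I(\mu_2)$ with $\mu_1\ne\mu_2$, then $\mathcal J(\mu_1)-\mathcal J(\mu_2)=O(e^{-A/h})$ for every $A>0$, so letting $A\to\infty$ at fixed $h$ forces $\mathcal J(\mu_1)=\mathcal J(\mu_2)$, contradicting the injectivity of $\mathcal J$. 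Your route is fine in the applications, but note that it implicitly requires the remainder (equivalently $\mathcal I$ itself) to be holomorphic in $\mu$ — a hypothesis not stated in the lemma, though satisfied wherever the lemma is invoked in the paper — and it forces an extra shrinkage of $\ve$. The paper's $A\to\infty$ device is slightly more robust: no regularity at all is required of the remainder, and no shrinkage of $\ve$ is incurred.
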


\begin{proof}
Note that
$$
\partial I_l(\mu)/\partial\mu=-\mu\int_{\alpha_l(\mu)}^{\beta_l(\mu)}(V(t)^2-\mu^2)^{-1/2}\, dt
$$
since $\alpha_l$ and $\beta_l$ depend analytically on $\mu$ and are roots to $V(x)^2-\mu^2=0$. At $\mu_0\in\R$, this is a real integral with positive integrand. Hence, $I_l'(\mu_0)<0$, where prime denotes differentiation with respect to $\mu$, and we can ensure that $I_l'(\mu)\ne0$ for $\mu\in B_\ve(\mu_0)$ by choosing $\ve$ sufficiently small. The same arguments show that $I_r'(\mu), I'(\mu)\ne0$ for $\mu\in B_\ve(\mu_0)$.
Let $\mathcal J(\mu,h)= I(\mu)+ha(\mu,h)$, then $\mathcal J'(\mu,h)=I'(\mu)+O(h)$, where now $I'$ is any of the three derivatives just discussed, so $\mathcal J(\mu,h)$ is injective in $B_\ve(\mu_0)$ if $h$ is sufficiently small. The same must be true for $\mathcal I$, for if $\mathcal I(\mu_1)=\mathcal I(\mu_2)$ for some $\mu_1\ne\mu_2$, then $0=\mathcal J(\mu_1,h)-\mathcal J(\mu_2,h)+O(e^{-A/h})$. Letting $A\to\infty$ gives $\mathcal J(\mu_1,h)=\mathcal J(\mu_2,h)$, a contradiction.
By assumption we have $\mathcal I^\ast=\mathcal I$, so 
$$
\mathcal I(\bar \mu_k,h)=\overline{\mathcal I(\mu_k,h)}=\overline{y_k}=y_k=\mathcal I(\mu_k,h)
$$
since $y_k$ is real. Since $\mathcal I$ is injective, we conclude that $\mu_k=\bar \mu_k$.
\end{proof}

\section{Eigenvalues for a single-lobe potential}\label{section:single-lobe}

Here we suppose that $V$ is a single-lobe potential near $\mu_0$, and let $B_\ve(\mu_0)$ be a small neighborhood as described in connection with Definition \ref{def:single-lobe}. We will consider eigenvalues $\lambda=i\mu$ with $\mu\in B_\ve(\mu_0)$ with the purpose of deriving the quantization condition \eqref{intro:qcondsw} and proving Theorem \ref{pEV}. We ask the reader to recall the relevant Stokes geometry described in \S\ref{ss:stokesgeometry} and Figure \ref{fig:stokes_config_single}.

It is known that there are solutions $u_0=u_0(\mu)\in L^2(\R_+)$ and $v_0=v_0(\mu)\in L^2(\R_-)$ of \eqref{evp} (unique modulo constant factors) such that  $\lambda = i\mu$ is an eigenvalue of $P(h)$ if and only if $u_0=cv_0$ for some $c=c(\mu,h)$, thus
\begin{equation}\label{eq:wronskiancondition}
\mathcal{W}(u_0, v_0) = 0.
\end{equation}
As in the Schr{\"o}dinger case, this can be shown by following the program of Olver \cite{olver1997asymptotics} -- see \cite{hatzizisis2020semiclassical} for a detailed presentation in this direction.

\begin{rem}\label{rmk:symmetrichoice}
By modifying $u_0$ and $v_0$ if necessary we may assume that $u_0^\ast=iu_0$, $v^\ast_0=iv_0$ and $u_0=cv_0$ with $c=c^\ast$. Indeed, if $\mu\in B_\ve(\mu_0)$ then $\bar\mu\in B_\ve(\mu_0)$ since $\mu_0\in\R$, which implies that
$u_0^\ast(\mu)\in L^2(\R_+)$ and $v_0^\ast(\mu)\in L^2(\R_-)$ for $\mu\in B_\ve(\mu_0)$. Note that $u_0^\ast$ and $v_0^\ast$ also solve \eqref{evp}. Set $\widetilde u_0=\frac12(u_0-iu_0^\ast)\in L^2(\R_+)$ and $\widetilde v_0=\frac12(v_0/c^\ast-iv_0^\ast/c)\in L^2(\R_-)$. By uniqueness follows that $u_0^\ast$ is a multiple of $u_0$ and  $v_0^\ast$ is a multiple of $v_0$. (If it happens that $u_0^\ast = iu_0$ then $\widetilde u_0=u_0$ and $\widetilde v_0=v_0/c^\ast$. If it happens that $u_0^\ast = -iu_0$ we take $\widetilde u_0=u_0^\ast$ and $\widetilde v_0=v_0^\ast/c$ instead.) It is then easy to see that $\lambda=i\mu$ is an eigenvalue of $P(h)$ if and only if $\widetilde u_0= cc^\ast \widetilde v_0$. Since $\widetilde u_0^\ast=i\widetilde u_0$, $\widetilde v_0^\ast=i\widetilde v_0$ and $(cc^\ast)^\ast=cc^\ast$, the claim follows.
\end{rem}

To calculate the Wronskian \eqref{eq:wronskiancondition}, we shall follow \cite[\S2]{gerard1988precise} and first show that modulo an exponentially small error, $u_0$ and $v_0$ are each multiples of exact WKB solutions. 
Pick real numbers $x_l,\tilde x_l$ and $\tilde x_r,x_r$ such that $x_l<\tilde x_l < \re \alpha_l$ and $\re\alpha_r<\tilde x_r<x_r$, and pick $y$ in the upper half plane such that $\re \alpha_l<\re y<\re\alpha_r$, see Figure \ref{fig:stokesone}. These may be chosen independent of $\mu\in B_\ve(\mu_0)$ if $\ve$ is small enough. Define two pairs of linearly independent exact WKB 
solutions $u_{l},\widetilde u_l$ and $u_{r},\widetilde u_r$ by setting
\begin{align}\label{eq:leftGG}
u_l = u^{+}(x;\alpha_l,x_{l}), \quad u_{r} = u^{-}(x;\alpha_r,x_{r}),\\
\widetilde u_l = u^{-}(x;\alpha_l,\tilde x_{l}), \quad \widetilde u_{r} = u^{+}(x;\alpha_r,\tilde x_{r}),
\label{eq:rightGG}
\end{align}
with $u^\pm$ given by \eqref{exactWKB}. By (a slight modification of) the arguments of G\'erard and Grigis \cite[\S2.2]{gerard1988precise} we obtain the following representation formulas, where we use similar notation to make comparison easier.

\begin{figure}
	\centering
\includegraphics[scale=.85]{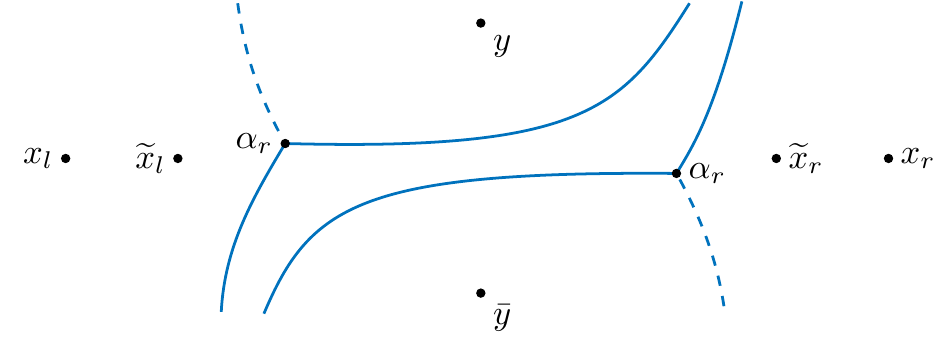}
\caption{\label{fig:stokesone} The location of amplitude base points relative the neighboring turning points for generic single-lobe potential $V$ and $\lambda=i\mu\in B_\ve(\lambda_0)$. Branch cuts are indicated by dashed lines.}
\end{figure}

\begin{lem}\label{lem:u0v0}
Let $u_\bullet,\widetilde u_\bullet$, $\bullet=l,r$, be given by \eqref{eq:leftGG}--\eqref{eq:rightGG}. Then
\begin{align}\label{v0}
v_0&=m_-(h)u_l+m_+(h)\widetilde u_l,\\
u_0&=l_-(h)u_r+l_+(h)\widetilde u_r,
\label{u0}
\end{align}
where $l_-(h)l_+(h)$ and $m_-(h)m_+(h)$ are of order $O(1)$ as $h\to0$, and $\lvert m_+(h)\rvert\le Ce^{z((x_l+\tilde x_l)/2)/h}$ and $\lvert l_+(h)\rvert\le Ce^{-z((x_r+\tilde x_r)/2)/h}$ are exponentially small as $h\to0$.
\end{lem}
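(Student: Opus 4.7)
The plan is to adapt to the Dirac system the arguments of G\'erard and Grigis \cite{gerard1988precise} for the Schr\"odinger case. The first step is to verify linear independence of each WKB pair. Since the segment $[x_l,\tilde x_l]\subset\R$ lies in the Stokes region to the left of $\alpha_l$, where $\re z(\cdot;\alpha_l)$ is strictly increasing (see \S\ref{ss:stokesgeometry}), the amplitude base point $\tilde x_l$ belongs to $\Omega_+$ with respect to $x_l$, and \eqref{Wronskian1+} together with Remark \ref{h-asymptotics} yields $\mathcal W(u_l,\widetilde u_l)=4i\,w^+_{\mathrm{even}}(\tilde x_l,h;x_l)=4i+O(h)$, which is nonzero for small $h$. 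The same argument on the right of $\alpha_r$ gives $\mathcal W(u_r,\widetilde u_r)=4i+O(h)\ne 0$. Hence each pair is a basis of the solution space, and \eqref{v0}--\eqref{u0} hold with scalar coefficients uniquely determined by Cramer's rule,
\begin{align*}
m_-=\frac{\mathcal W(v_0,\widetilde u_l)}{\mathcal W(u_l,\widetilde u_l)},\quad m_+=-\frac{\mathcal W(v_0,u_l)}{\mathcal W(u_l,\widetilde u_l)},\quad l_-=\frac{\mathcal W(u_0,\widetilde u_r)}{\mathcal W(u_r,\widetilde u_r)},\quad l_+=-\frac{\mathcal W(u_0,u_r)}{\mathcal W(u_r,\widetilde u_r)}.
\end{align*}

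Each of the Wronskians in the numerators is constant in $x$, so I evaluate them at the midpoints $\eta:=(x_l+\tilde x_l)/2$ and $\xi:=(x_r+\tilde x_r)/2$. Each midpoint lies in the intersection of the good region $\Omega_+$ of one amplitude base point of its pair and $\Omega_-$ of the other, so Remark \ref{h-asymptotics} applies and gives
\begin{gather*}
|u_l(\eta)|\le Ce^{z(\eta;\alpha_l)/h},\qquad|\widetilde u_l(\eta)|\le Ce^{-z(\eta;\alpha_l)/h},\\
|u_r(\xi)|\le Ce^{-z(\xi;\alpha_r)/h},\qquad|\widetilde u_r(\xi)|\le Ce^{z(\xi;\alpha_r)/h}.
\end{gather*}
The Taylor-expansion analysis of \S\ref{ss:stokesgeometry} shows $z(\eta;\alpha_l)<0$ and $z(\xi;\alpha_r)>0$ for $\mu\in\R$, so the untilded solutions are exponentially small at the midpoints while the tilded ones are exponentially large of reciprocal size. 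Under the normalizations $|v_0(\eta)|,|u_0(\xi)|=O(1)$, expanding each Wronskian as a $2\times 2$ determinant then yields $|m_+|\le Ce^{z(\eta;\alpha_l)/h}$ and $|l_+|\le Ce^{-z(\xi;\alpha_r)/h}$, together with the complementary bounds $|m_-|\le Ce^{-z(\eta;\alpha_l)/h}$ and $|l_-|\le Ce^{z(\xi;\alpha_r)/h}$, whence the products $|m_-m_+|$ and $|l_-l_+|$ telescope to $O(1)$, which is precisely the content of the lemma.

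The main delicate point is justifying the normalizations $|v_0(\eta)|,|u_0(\xi)|=O(1)$ uniformly in $h$. Since $v_0$ and $u_0$ are defined only up to multiplicative scalars, such rescalings are free provided these values are nonzero uniformly as $h\to 0$. The cleanest way to secure this is to invoke the Olver--Liouville--Green construction of $u_0,v_0$ sketched in \cite{hatzizisis2020semiclassical}, which realizes each as an exact WKB solution with rigorously controlled exponentially small error, directly furnishing nonzero $O(1)$ values at any finite non-turning point; alternatively, one can define $v_0$ (resp.~$u_0$) directly as the exact solution with a prescribed $O(1)$ initial condition at $\eta$ (resp.~$\xi$) and then verify by a standard contraction-mapping argument that it is the subdominant solution at $-\infty$ (resp.~$+\infty$).
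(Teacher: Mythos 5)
Your proof is correct and follows the same route the paper itself implicitly takes: the paper gives no details beyond citing G\'erard--Grigis \cite[\S2.2]{gerard1988precise} with ``a slight modification,'' and your adaptation — establishing linear independence of each WKB pair via \eqref{Wronskian1+}, writing $m_\pm$, $l_\pm$ by Cramer's rule as Wronskian ratios, and estimating the numerators at the midpoints $\eta,\xi$ using the exponential weights of $u^\pm$ together with an $O(1)$ normalization of $v_0$, $u_0$ — is precisely that modification. The one place to read charitably is your ``alternatively'' fallback for the normalization: prescribing a generic $O(1)$ initial vector at $\eta$ would not yield the subdominant solution, so it should be understood that the initial data is taken from the subdominant WKB ansatz (or, more simply, that the unique-up-to-scale recessive solution is rescaled to have unit value at $\eta$), after which the contraction-mapping/Volterra comparison argument applies as you indicate.
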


Note that by Proposition \ref{proposition:sym1}, $u_r^\ast(x)=iu_r(x)$ when $x$ belongs to the Stokes domain containing $x_r$, and the same is true for $\widetilde u_r$. By Remark \ref{rmk:symmetrichoice} it is also true for $u_0$. In the domain containing $x_l$, the same relation holds for $u_l,\widetilde u_l,v_0$. A simple calculation then shows that
\begin{equation}\label{eq:lmsym}
l_\pm^\ast(\mu,h)=l_\pm(\mu,h),\quad  m_\pm^\ast(\mu,h)=m_\pm(\mu,h).
\end{equation}

Next, introduce two pairs $u^+_l,u^-_l$ and $u^+_r,u^-_r$ of intermediate exact WKB solutions given as
\begin{equation}\label{eq:upmbullet}
u_{\bullet}^{+}=u^{+}(x,h;\alpha_\bullet,y),\quad u_{\bullet}^{-}=u^{-}(x,h;\alpha_\bullet,\bar{y}),\quad \bullet=l,r.
\end{equation}
Note that
\begin{equation}\label{sol}
u_{l}^{\pm} = e^{\pm iI(\mu)/h} u_{r}^{\pm},
\end{equation}
where $I(\mu)$ is the action integral \eqref{actionintegralsimplelobe}.
Represent $v_0$ and $u_0$ as the linear combinations
\begin{align*}
	v_{0}=c_{11}u_{l}^{+} + c_{12}u_{l}^{-},\\
	u_{0}=c_{21}u_{r}^{+} + c_{22}u_{r}^{-},
\end{align*}
where the coefficients $c_{jk}$ depend on the parameters $\mu$ and $h$. For $x$ near the lobe we get
\begin{equation}\label{eq:calcsymleftlobe}
v_0^\ast(x)=c_{11}^\ast (u_{l}^{+})^\ast(x) + c_{12}^\ast (u_{l}^{-})^\ast(x)=
c_{11}^\ast u_{l}^{-}(x) + c_{12}^\ast u_{l}^{+}(x)
\end{equation}
by Proposition \ref{proposition:sym1}. Since $v_0^\ast=iv_0$ this means that $c_{12}=-ic_{11}^\ast$. Similarly, one checks that $c_{21}=-ic_{22}^\ast$. Hence,
\begin{align} \label{v0final}
	v_{0}&=c_lu_{l}^{+} -i c_l^\ast u_{l}^{-},\\
	u_{0}&=-ic_r^\ast u_{r}^{+} +c_ru_{r}^{-},
	\label{u0final}
\end{align}
for some symbols $c_l$ and $c_r$. 

\begin{lem}\label{lemma:clcr}
Let $\mu\in B_\ve(\mu_0)$. For any $A>0$ we may choose $x_r\gg\tilde x_r$ and $x_l\ll\tilde x_l$ so that the symbols $c_l$ and $c_r$ in \eqref{v0final}--\eqref{u0final} are given by 
\begin{align}\label{cl}
c_l&=m_-\tau_l,&&\tau_l=\tilde\tau_l-R_l,\\
c_r&=l_-\tau_r,&&\tau_r=\tilde\tau_r-R_r,
\label{cr}
\end{align}
where $\tau_\bullet,\tilde\tau_\bullet=1+O(h)$ and $R_\bullet=O(he^{-A/h})$ as $h\to0$, $\bullet=l,r$.
\end{lem}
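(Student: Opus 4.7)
The plan is to combine the representations \eqref{v0} and \eqref{u0} of $v_0$ and $u_0$ from Lemma \ref{lem:u0v0} with a change of basis from the outer pair $(u_l,\widetilde u_l)$ to the intermediate pair $(u_l^+,u_l^-)$ defined in \eqref{eq:upmbullet}, and then read off $c_l$ by matching with \eqref{v0final}. The analogous procedure using $(u_r^+,u_r^-)$ yields $c_r$. All change-of-basis coefficients are computed as ratios of Wronskians via \eqref{Wronskian1+}, \eqref{Wronskian1-} and \eqref{Wronskian2}, and estimated through Remark \ref{h-asymptotics}.

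I focus on the left. Since $\mathcal W(u_l^+,u_l^-)=4i\,w^+_{\mathrm{even}}(\bar y,h;y)=4i(1+O(h))$ by \eqref{Wronskian1+}, the pair $(u_l^+,u_l^-)$ is a basis for $h$ small. Expanding $u_l=a_l u_l^++b_l u_l^-$ and $\widetilde u_l=A_l u_l^++B_l u_l^-$ and solving via Wronskians yields
\begin{equation*}
a_l=\frac{w^+_{\mathrm{even}}(\bar y,h;x_l)}{w^+_{\mathrm{even}}(\bar y,h;y)},\qquad A_l=-\frac{e^{-2z(\tilde x_l;\alpha_l)/h}\,w^-_{\mathrm{odd}}(\tilde x_l,h;\bar y)}{w^+_{\mathrm{even}}(\bar y,h;y)}.
\end{equation*}
The Stokes geometry of \S\ref{ss:stokesgeometry} (see Figure \ref{fig:stokes_config_single}) places the evaluation points in the sectors $\Omega_\pm$ required for the asymptotics of Remark \ref{h-asymptotics}, giving $a_l=1+O(h)$ and $A_l=O(h)\,e^{-2z(\tilde x_l;\alpha_l)/h}$. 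Substituting into \eqref{v0} and comparing with \eqref{v0final} yields $c_l=m_- a_l+m_+A_l$, so setting $\tilde\tau_l:=a_l$ and $R_l:=-m_+A_l/m_-$ produces \eqref{cl}.

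To obtain the exponential bound on $R_l$, I combine the estimate on $A_l$ with the bound $|m_+|\le Ce^{z((x_l+\tilde x_l)/2;\alpha_l)/h}$ from Lemma \ref{lem:u0v0}, which gives, after normalizing $v_0$ so that $m_-$ is bounded below,
\begin{equation*}
|R_l|\le C'h\exp\Bigl(\bigl[z\bigl((x_l+\tilde x_l)/2;\alpha_l\bigr)-2z(\tilde x_l;\alpha_l)\bigr]/h\Bigr).
\end{equation*}
In the Stokes region to the left of $\alpha_l$, $z(\cdot;\alpha_l)$ takes the real-valued form \eqref{eq:zinleftright} and tends to $-\infty$ as $\re x\to-\infty$. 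Fixing $\tilde x_l$ and letting $x_l\to-\infty$ therefore drives $z((x_l+\tilde x_l)/2;\alpha_l)$ to $-\infty$ while $2z(\tilde x_l;\alpha_l)$ remains fixed, so the bracketed exponent can be made as negative as desired. Thus for any $A>0$, one can choose $x_l\ll\tilde x_l$ with $R_l=O(he^{-A/h})$; that $\tau_l=\tilde\tau_l-R_l=1+O(h)$ is then immediate. The representation \eqref{cr} follows by an entirely analogous argument on the right, where the corresponding exponent is driven to $-\infty$ by taking $x_r\to+\infty$.

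The main obstacle is the diagrammatic verification, using the Stokes geometry of \S\ref{ss:stokesgeometry}, that the amplitude functions $w^+_{\mathrm{even}}(\bar y,h;x_l)$, $w^+_{\mathrm{even}}(\bar y,h;y)$ and $w^-_{\mathrm{odd}}(\tilde x_l,h;\bar y)$ are actually covered by Remark \ref{h-asymptotics}—that is, checking for paths between the prescribed amplitude base points and evaluation points along which $\re z$ is strictly monotone of the appropriate type, and which avoid the branch cuts placed at arguments $2\pi/3$ and $5\pi/3$ from $\alpha_l$ and $\alpha_r$. This is a careful but routine check based on the monotonicity of $\re z$ established via \eqref{eq:Taylor1}--\eqref{eq:Taylor2}.
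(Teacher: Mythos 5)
Your proof is correct and follows essentially the same route as the paper: you express $c_l$ (and $c_r$) via Wronskian ratios between the outer exact WKB solutions and the intermediate pair $u_\bullet^\pm$, identify $\tilde\tau_\bullet$ with the leading Wronskian ratio, isolate $R_\bullet$ as the $m_+$ (resp.\ $l_+$) contribution, and then use the exponential bound on $m_+$ from Lemma~\ref{lem:u0v0} together with the behavior $z(x;\alpha_l)\to-\infty$ as $x_l\to-\infty$ to make $R_\bullet=O(he^{-A/h})$. The presentation through explicit change-of-basis coefficients $a_l,A_l$ is a minor repackaging of the paper's direct Wronskian-ratio formula, and your closing remark about checking the Stokes paths is exactly the point the paper also invokes via Remark~\ref{h-asymptotics}.
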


\begin{proof}
Using \eqref{v0}--\eqref{u0} we see that
\begin{equation*}
c_l=m_-\frac{\mathcal W(u_l,u_l^-)}{\mathcal W(u_l^+,u_l^-)}+m_+\frac{\mathcal W(\widetilde u_l,u_l^-)}{\mathcal W(u_l^+,u_l^-)},\quad
c_r=l_-\frac{\mathcal W(u_r^+,u_r)}{\mathcal W(u_r^+,u_r^-)}+l_+\frac{\mathcal W(u_r^+,\widetilde u_r)}{\mathcal W(u_r^+,u_r^-)}.
\end{equation*}

For $\mathcal{W}(u_l, u_l^{-})$, $\mathcal{W}(u_l^{+}, u_l^{-})$, $\mathcal{W}(u_r^{+}, u_r)$ and $\mathcal{W}(u_r^{+}, u_r^-)$, we can directly apply the Wronskian formula \eqref{Wronskian1+}, and obtain
\begin{align*}
\mathcal{W}(u_l, u_l^{-}) & = 4iw_\mathrm{even}^+(\bar{y},h;x_l),  &  \mathcal{W}(u_r^{+}, u_r)&= 4iw_\mathrm{even}^+(x_r,h;y), \\
\mathcal{W}(u_l^{+}, u_l^{-}) & =  4iw_\mathrm{even}^+(\bar{y},h;y), &  \mathcal{W}(u_r^{+}, u_r^{-}) & =  4iw_\mathrm{even}^+(\bar{y},h;y).
\end{align*}
In particular, we can easily find curves such that each amplitude function appearing in these expressions has an asymptotic expansion described by Remark \ref{h-asymptotics}.
Indeed, this just requires being able to connect the relevant points (e.g., $x_l$ and $\bar y$ in $w^+_\mathrm{even}(\bar y,h;x_l)$) through curves along which $\re z(x)$ is increasing, which is clearly possible in view of the discussion connected to Figure \ref{fig:stokes_config_single} (see the figure for comparison). Hence, 
\begin{equation}\label{eq:tau}
\tilde\tau_l=\frac{\mathcal W(u_l,u_l^-)}{\mathcal W(u_l^+,u_l^-)}=\frac{w^+_\mathrm{even}(\bar y,h;x_l)}{w^+_\mathrm{even}(\bar y,h;y)},
\quad
\tilde\tau_r=\frac{\mathcal W(u_r^+,u_r)}{\mathcal W(u_r^+,u_r^-)}=\frac{w^+_\mathrm{even}(x_r,h;y)}{w^+_\mathrm{even}(\bar y,h;y)}
\end{equation}
have the stated asymptotic properties as $h\to0$ by Remark \ref{h-asymptotics}.

To compute $\mathcal W(\widetilde u_l,u_l^-)$ and  $\mathcal W(u_r^+,\widetilde u_r)$ we use the Wronskian formula \eqref{Wronskian2} for solutions of the same type instead of \eqref{Wronskian1+}. We then obtain
\begin{align*}
\mathcal W(\widetilde u_l,u_l^-)&=-4ie^{-2z(\tilde x_l;\alpha_l)/h}w^-_\mathrm{odd}(\tilde x_l,\bar y),\\
\mathcal W(u_r^+,\widetilde u_r)&= -4ie^{2z(\tilde x_r;\alpha_r)/h}w^+_\mathrm{odd}(\tilde x_r, y).
\end{align*}
We then obtain \eqref{cl}--\eqref{cr} by setting
\begin{equation*}
R_l=\frac{m_+}{m_-}e^{-2z(\tilde x_l;\alpha_l)/h}\frac{w^-_\mathrm{odd}(\tilde x_l,\bar y)}{w_\mathrm{even}^+(\bar y,y)},\quad R_r=\frac{l_+}{l_-}e^{2z(\tilde x_r;\alpha_r)/h}\frac{w^+_\mathrm{odd}(\tilde x_r, y)}{w_\mathrm{even}^+(\bar y,y)}.
\end{equation*}
Since $\re z(x)\to\pm\infty$ as $x\to\pm\infty$, the asymptotic properties of $R_\bullet$ follow from Lemma \ref{lem:u0v0} and Remark \ref{h-asymptotics}.
\end{proof}

These intermediate WKB solutions will allow us to prove the following quantization condition.

\begin{thm}\label{QCL_}
Suppose that $V$ is a single-lobe potential near $\mu_0$. Then, there exist positive constants $\ve$ and $h_{0}$, 	and a function $r(\mu, h)$ bounded on $B_\ve(\mu_{0}) \times (0, h_{0}]$  such that $\lambda = i\mu$, $\mu \in B_\ve(\mu_{0})$, is an eigenvalue of $P(h)$ for $h \in (0,h_{0}]$	if and only if 
\begin{equation}\label{eq:qcl}
I(\mu) = \bigg(k+\frac{1}{2}\bigg)\pi h + h^{2}r(\mu,h) 
\end{equation}
	holds for some integer $k$. Moreover, $r=r^\ast$.
\end{thm}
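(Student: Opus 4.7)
The plan is to convert the Wronskian eigenvalue condition $\mathcal{W}(u_0,v_0)=0$ into a transcendental equation of the form $e^{2iI/h}=-(c_lc_r)^\myconj/(c_lc_r)$, take a principal logarithm, and read off both the quantization condition and the remainder $r$. The key ingredients are the representation formulas \eqref{v0final}--\eqref{u0final}, the phase factor \eqref{sol} carrying the action integral $I(\mu)$, the asymptotic form of the coefficients given by Lemma \ref{lemma:clcr}, and the symmetry identity \eqref{eq:lmsym}.

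First, substitute \eqref{sol} into \eqref{v0final} to obtain $v_0 = c_l e^{iI/h}u_r^+ - ic_l^\myconj e^{-iI/h}u_r^-$, so that both $u_0$ (from \eqref{u0final}) and $v_0$ are expressed in the common basis $\{u_r^+,u_r^-\}$. Bilinearity of the Wronskian combined with $\mathcal{W}(u_r^\pm,u_r^\pm)=0$ yields
\[
\mathcal{W}(u_0,v_0) = -\mathcal{W}(u_r^+,u_r^-)\bigl(c_lc_r\, e^{iI/h}+c_l^\myconj c_r^\myconj\, e^{-iI/h}\bigr).
\]
By \eqref{Wronskian1+} and Remark \ref{h-asymptotics}, $\mathcal{W}(u_r^+,u_r^-)=4iw_{\mathrm{even}}^+(\bar y,h;y)=4i+O(h)$ is nonzero for $h$ small, so $\lambda=i\mu$ is an eigenvalue if and only if $e^{2iI(\mu)/h}=-(c_lc_r)^\myconj/(c_lc_r)$. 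Using Lemma \ref{lemma:clcr} to write $c_lc_r=l_-m_-\,\tau_l\tau_r$ and \eqref{eq:lmsym} to observe $(l_-m_-)^\myconj=l_-m_-$, the scalar prefactor $l_-m_-$ cancels in the quotient and we are left with
\[
e^{2iI(\mu)/h} = -(\tau_l\tau_r)^\myconj/(\tau_l\tau_r).
\]

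Since $\tau_l\tau_r=1+O(h)$, the right-hand side lies in a small neighborhood of $-1$, so the principal branch of the logarithm is well-defined and gives $2iI(\mu)/h=i(2k+1)\pi+\log\bigl((\tau_l\tau_r)^\myconj/(\tau_l\tau_r)\bigr)$ for some $k\in\Z$. Rearranging yields \eqref{eq:qcl} with
\[
r(\mu,h) := -\frac{i}{2h}\log\bigl((\tau_l\tau_r)^\myconj/(\tau_l\tau_r)\bigr),
\]
which is bounded on $B_\ve(\mu_0)\times(0,h_0]$ because $\log(1+O(h))=O(h)$; the converse direction of the equivalence is immediate by reversing the chain of manipulations.

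Finally, to verify $r=r^\myconj$, set $\tau:=\tau_l\tau_r$. The involution $(f^\myconj)^\myconj=f$ gives $\tau^\myconj(\bar\mu)/\tau(\bar\mu)=\overline{\tau(\mu)}/\overline{\tau^\myconj(\mu)}=\overline{\tau(\mu)/\tau^\myconj(\mu)}$, and the identity $\overline{\log w}=\log \bar w$, valid for the principal logarithm when $w$ is close to $1$, flips the sign of the logarithm in the defining expression for $r$, giving $r^\myconj=r$. The only step I expect to require care is the global consistency of the branch of $\log$ across $\mu\in B_\ve(\mu_0)$ and $0<h\le h_0$; this is routine because the argument of the logarithm stays uniformly close to $1$, so the principal branch works throughout. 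Everything else is straightforward bookkeeping that leverages the representation formulas and symmetries established in Section \ref{section:preliminaries}.
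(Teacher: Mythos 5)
Your proof follows exactly the same route as the paper: expressing $v_0$ and $u_0$ in the common basis $\{u_r^+,u_r^-\}$ via \eqref{sol}, reducing $\mathcal W(v_0,u_0)=0$ to $e^{2iI/h}\,(c_lc_r)/(c_l^\myconj c_r^\myconj)=-1$, cancelling $l_-m_-$ by \eqref{eq:lmsym}, and taking the principal logarithm to read off $r$; the symmetry $r=r^\myconj$ is handled by the same logarithm manipulation as in the paper. The only superficial difference is that the paper additionally factors through $\tilde\tau_\bullet$ (for use later in proving Theorem \ref{pEV}), while you work directly with $\tau_\bullet$, which is immaterial for the statement at hand.
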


\begin{proof}
By \eqref{eq:wronskiancondition}, the quantization condition is $\mathcal{W}(v_{0}, u_{0}) = 0$. Using the representations \eqref{v0final}--\eqref{u0final} together with \eqref{sol} we see that $\mathcal{W}(v_{0}, u_{0}) = 0$ if and only if
\begin{align*}
0=(e^{iI(\mu)/h}c_{l}c_{r} +e^{-iI(\mu)/h}c_{l}^\ast c_{r}^\ast )\mathcal{W}(u_{l}^{+}, u_{l}^{-}).
\end{align*}
Since $u_{l}^+$ and $u_{l}^-$ are linearly independent we have $\mathcal{W}(u_{l}^{+}, u_{l}^{-}) \neq 0$, so the quantization condition is reduced to 
\begin{align*}
e^{2iI(\mu)/h}\bigg(\frac{c_lc_r}{c_l^\ast c_r^\ast}\bigg) = -1,
\end{align*}
that is, \eqref{eq:qcl} holds with
\begin{align} \label{remainder}
r(\mu,h) = -\frac{1}{2ih}\log\bigg(\frac{c_{l}c_{r}}{c_{l}^\ast c_{r}^\ast}\bigg) =-\frac{1}{2ih}\log\bigg(\frac{\tilde\tau_{l}\tilde\tau_{r}}{\tilde\tau_{l}^\ast\tilde\tau_{r}^\ast}\bigg)  +O(e^{-A/h})
\end{align}
where the second identity follows from 
an easy calculation using \eqref{eq:lmsym} and Lemma \ref{lemma:clcr}. By the same lemma we have $\tilde\tau_\bullet(\mu)=1+O(h)$ as $h\to0$, and this holds for all $\mu\in B_\ve(\mu_0)$. Since $\mu_0\in\R$ we have $\bar\mu\in B_\ve(\mu_0)$, so $\tilde\tau_\bullet^\ast(\mu)=\overline{\tilde\tau_\bullet(\bar\mu)}=1+O(h)$, and hence $r(\mu,h)=O(1)$ as $h\to0$.

To see that $r^\ast=r$, we use \eqref{remainder} and a logarithmic identity and get
$$
r(\mu,h) =\frac{1}{2ih}\log\bigg(\frac{c_{l} c_{r}}{c_{l}^\ast c_{r}^\ast}\bigg) ^{-1}= \frac{1}{2ih}\log\bigg(\frac{c_{l}^\ast c_{r}^\ast}{c_{l} c_{r}}\bigg) =r^\ast(\mu,h),
$$
which completes the proof.
\end{proof}

\begin{proof}[Proof of Theorem \ref{pEV}]
Let $r$ be given by Theorem \ref{QCL_}, and let $\tilde r$ be defined by the logarithm on the right of \eqref{remainder}, so that $r=\tilde r+O(e^{-A/h})$ for any $A>0$. Note that the amplitude functions $w^+_\mathrm{even}$ are so-called {\it analytic symbols} with respect to the spectral parameter $\lambda=i\mu$ and $h>0$. This means that $\partial w^+_\mathrm{even}(\mu)/\partial\mu=O(h)$ uniformly for $\mu\in B_\ve(\mu_0)$, see \cite{gerard1988precise} or \cite{sjostrand1982singularites}. Using the definition of $\tilde r$ together with \eqref{eq:tau} it is then easy to see that $h\partial \tilde r(\mu,h)/\partial \mu=O(h)$.

Let us define a function $\mathcal I(\mu,h)$ as
\begin{align*}
	\mathcal I(\mu,h) = I(\mu) - h^{2} r(\mu,h).
\end{align*}
Then $\mathcal I^\ast=\mathcal I$ so we may apply Lemma \ref{lem:referencepoints} (with $a$ in the lemma given by $a(\mu,h)=-h\tilde r(\mu,h)$) to conclude that if $h$ is sufficiently small then there is precisely one $\mu_k$ which solves $\mathcal I(\mu,h)=(k+\frac{1}{2})\pi h$. Moreover, $\mu_k\in\R$.
By Theorem \ref{QCL_} this means that eigenvalues $\lambda = i\mu$ of $P(h)$ are purely imaginary for $\mu$ near $\mu_0$. 
\end{proof}

\begin{rem}\label{rmk:EVnbh}
By Lemma \ref{lem:referencepoints} (with $a(\mu,h)\equiv0$) there is precisely one solution $\musw$ to $I(\mu)=(k+\frac{1}{2})\pi h$, and $\musw\in\R$. From the previous proof we then infer that $\lvert\mu_k-\musw\rvert=O(h^2)$ by the aid of Taylor's formula, where $\lambda_k=i\mu_k$ is the eigenvalue of $P(h)$ satisfying \eqref{eq:qcl}.
Moreover, similar arguments also show that
$$
\lvert \mu_j^\mathrm{sl}-\musw\rvert\ge \frac{1}{C}\lvert j-k\rvert\pi h,
$$
where $C$ is an upper bound of $\partial I(\mu)/\partial \mu$ for $\mu\in B_\ve(\mu_0)$. Hence,
if $\lambda_j=i\mu_j$ is an eigenvalue such that $\mu_j$ solves \eqref{eq:qcl} with $k$ replaced by $j\ne k$, then 
$$
\lvert \mu_j-\musw\rvert\ge\lvert\lvert\mu_j-\mu_j^\mathrm{sl}\rvert-\lvert \mu_j^\mathrm{sl}-\musw\rvert\rvert\ge O(h),
$$
showing that there is a unique eigenvalue $O(h^2)$-close to $\musw$.
\end{rem}

\begin{rmk}
As shown by Theorem \ref{pEV}, eigenvalues of $P(h)$ are purely imaginary for single-lobe potentials. In particular, the Stokes geometry depicted in the right panel of Figure \ref{fig:stokes_config_single} is never realized in the occasion of an eigenvalue. Heuristically this can be explained by the fact that there would otherwise be a curve transversal to the Stokes lines which connects the Stokes sector to the left of $\alpha_l$ with the sector to the right of $\alpha_r$. Hence, the exact WKB solution $u_l$ above, which can be written as $u_l(x)=e^{z(x)}\tilde u$ for some $\tilde u$, could be continued into this right sector along a curve where $\re z(x)$ is increasing. Letting $x\to\infty$ along $\R$ would yield a contradiction to the fact that $u_l$ is collinear (modulo an exponentially small error) with the function $u_0\in L^2(\R_+)$. 
\end{rmk}

\section{Eigenvalues for a double-lobe potential}\label{section:double-lobe}
In this section we suppose that $V$ is a double-lobe potential near $\mu_0$ and consider eigenvalues $\lambda=i\mu$ with $\mu\in B_\ve(\mu_0)$, where $B_\ve(\mu_0)$ is a small neighborhood as described in connection with Definition \ref{def:double-lobe}. The goal is to derive a quantization condition for such eigenvalues, which will then be used to prove the eigenvalue splitting occurring for symmetric potentials described in the introduction. For this reason, we will repeatedly include additional statements resulting from imposing the assumption that $V(x)=\pm V(-x)$ for $x\in\R$.

The Stokes geometry has been described in \S\ref{ss:stokesgeometry} and Figure \ref{fig:stokes_config_double}. As in Section \ref{section:single-lobe} we choose real numbers $x_l,\tilde x_l$ and $\tilde x_r,x_r$ such that $x_l<\tilde x_l<\re\alpha_l$ and $\re\alpha_r<\tilde x_r<x_r$. We also choose points $y_l$ and $y_r$ in the upper half-plane such that
$$
\re\alpha_l<\re y_l<\re\beta_l,\quad\re\beta_r<\re y_r<\re\alpha_r,
$$
see Figure \ref{figure4}. All points are chosen independent of $\mu\in B_\ve(\mu_0)$.
In the case when $V(x)=\pm V(-x)$ for $x\in\R$
we choose $y_\bullet$ and $x_\bullet,\tilde x_\bullet$ so that
$$
y_l=-\bar y_r,\quad\text{and}\quad x_l=-x_r,\quad \tilde x_l=-\tilde x_r.
$$

Let $u_0\in L^2(\R_+)$ and $v_0\in L^2(\R_-)$ be the functions described in Section \ref{section:single-lobe} such that $\lambda=i\mu$ is an eigenvalue of $P(h)$ if and only if $u_0=cv_0$ for some constant $c=c(\mu,h)$. We choose $u_0$ and $v_0$ in accordance with Remark \ref{rmk:symmetrichoice} so that $v_0^\ast=iv_0$, $u_0^\ast=iu_0$ and $c=c^\ast$. When $V$ is even we can choose $v_0$ as above and define 
\begin{equation}\label{eq:evendefu0}
u_0(x)=-\begin{pmatrix} 0&1\\1&0\end{pmatrix}v_0(-x).
\end{equation}
Then $u_0^\ast=iu_0$ and, using the fact that $v_0$ solves \eqref{evp}, it is easy to check that $u_0$ also solves \eqref{evp}. When $V$ is odd we instead define $u_0$ as
\begin{equation}\label{eq:odddefu0}
u_0(x)=-\begin{pmatrix*}[r]0&-1\\1&0\end{pmatrix*}v_0(-x),
\end{equation}
compare with Proposition \ref{proposition:sym5}.

\begin{figure}
\centering
\includegraphics[scale=.85]{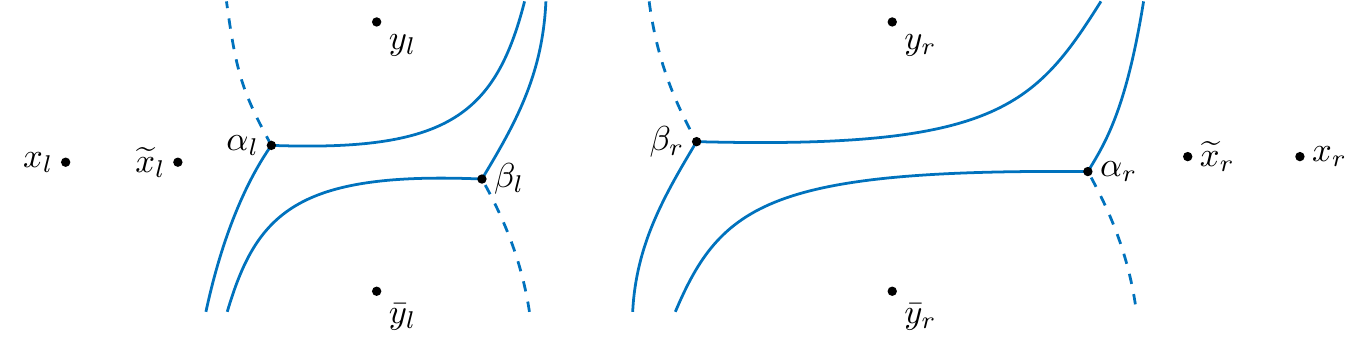}
\caption{\label{figure4} The location of amplitude base points relative the neighboring turning points for generic double-lobe potential $V$ and $\mu\in B_\ve(\mu_0)$. Branch cuts are indicated by dashed lines.}
\end{figure}

Introduce intermediate exact WKB solutions $u^+_\bullet=u^+(x;\alpha_\bullet,y_\bullet,\mu)$ and $u_\bullet^-=u^-(x;\alpha_\bullet,\bar y_\bullet,\mu)$, $\bullet=l,r$. Note that these are defined as in Section \ref{section:single-lobe} except that $y$ has now been replaced by $y_\bullet$, compare with \eqref{eq:upmbullet}. The reason for this is of course that we now have two lobes instead of one. Represent $v_0$ and $u_0$ as the linear combinations 
\begin{align*}
	v_{0}=c_{11}u_{l}^{+} + c_{12}u_{l}^{-},\\
	u_{0}=c_{21}u_{r}^{+} + c_{22}u_{r}^{-}.
\end{align*}
As for single-lobes one checks that $c_{12}=-ic_{11}^\ast$ using Proposition \ref{proposition:sym1} near the left lobe, see \eqref{eq:calcsymleftlobe}.
When $V(\beta_l)=\pm V(\beta_r)$ we use Proposition \ref{proposition:sym1} near the right lobe and obtain
$$
u_0^\ast(x)=c_{21}^\ast (u_{r}^{+})^\ast(x) + c_{22}^\ast (u_{r}^{-})^\ast(x)=
\pm c_{21}^\ast u_{r}^{-}(x) \pm c_{22}^\ast u_{r}^{+}(x).
$$
Since $u_0^\ast=iu_0$ this means that $c_{21}=\mp ic_{22}^\ast$. Hence,
\begin{align} \label{v0finaldouble}
	v_{0}&=c_lu_{l}^{+} -i c_l^\ast u_{l}^{-},\\
	u_{0}&=\mp ic_r^\ast u_{r}^{+} +c_ru_{r}^{-},
	\label{u0finaldouble}
\end{align}
for some symbols $c_l$ and $c_r$.

\begin{rem}\label{rem:clequalscr}
In the symmetric case when $V(-x)=\pm V(x)$ we have $c_l=c_r$. Indeed, then $\alpha_l=-\alpha_r$ and by definition we have $y_l=-\bar y_r$. If $V$ is even then
$$
u_l^\pm(-x)=-\begin{pmatrix}0&1\\1&0\end{pmatrix} u_r^\mp(x)
$$
by Proposition \ref{proposition:sym5}. In view of the definition \eqref{eq:evendefu0} of $u_0$ for even $V$ we get
$$
u_0(x)=-\begin{pmatrix}0&1\\1&0\end{pmatrix}(c_lu_l^+(-x)-ic_l^\ast u_l^-(-x))=c_l u_r^-(x)-ic_l^\ast u_r^+(x).
$$
Comparing the right-hand side with \eqref{u0finaldouble} we see that $c_l=c_r$ since $V(\beta_l)=V(\beta_r)$ when $V$ is even. If $V$ is odd then
$$
u_l^\pm(-x)=\pm \begin{pmatrix*}[r]0&-1\\1&0\end{pmatrix*} u_r^\mp(x)
$$
by Proposition \ref{proposition:sym5}. In view of the definition \eqref{eq:odddefu0} of $u_0$ for odd $V$ we get
$$
u_0(x)=-\begin{pmatrix*}[r]0&-1\\1&0\end{pmatrix*}(c_lu_l^+(-x)-ic_l^\ast u_l^-(-x))= c_l u_r^-(x)+ ic_l^\ast u_r^+(x).
$$
We again see that $c_l=c_r$ in view of \eqref{u0finaldouble} since $V(\beta_l)=-V(\beta_r)$ when $V$ is odd.
\end{rem}

Let $u_l,\widetilde u_l$ and $u_r,\widetilde u_r$ be given by \eqref{eq:leftGG}--\eqref{eq:rightGG}. Then Lemma \ref{lem:u0v0} holds also for double-lobe potentials, and by replacing $y$ with $y_\bullet$ in the proof of Lemma \ref{lemma:clcr} we find that the symbols $c_l,c_r$ in \eqref{v0finaldouble}--\eqref{u0finaldouble} can also be written as in \eqref{cl}--\eqref{cr}, that is, 
\begin{align}\label{cldouble}
c_l&=m_-\tau_l,&&\tau_l=\tilde\tau_l-R_l,\\
c_r&=l_-\tau_r,&&\tau_r=\tilde\tau_r-R_r,
\label{crdouble}
\end{align}
where $\tau_\bullet,\tilde\tau_\bullet=1+O(h)$ and $R_\bullet=O(he^{-A/h})$ for any $A>0$ as $h\to0$, $\bullet=l,r$.
When $V$ is symmetric we have $u_l(-x)=u_r(x)$ and $\widetilde u_l(-x)=\widetilde u_r(x)$, so using the representations \eqref{v0}--\eqref{u0} one checks that $m_-=l_-$ (while $m_+=\pm l_+$ when $V(x)=\pm V(-x)$) as in Remark \ref{rem:clequalscr}. Since $c_l=c_r$ it follows that $\tau_l=\tau_r$.

It will be convenient to introduce WKB solutions $u_1,u_2,u_3,u_4$
defined, when $V(\beta_l)=\pm V(\beta_r)$, by
\begin{align}
\label{u1u2}
u_1(x)&=i\tau_lu^+(x;\beta_l,y_l),
&& u_2(x)=-\tau_l^\myconj u^-(x;\beta_l,\bar y_l),\\
u_3(x)&=i\tau_r u^-(x;\beta_r,\bar y_r),
&& u_4(x)=\mp \tau_r^\myconj u^+(x;\beta_r, y_r).
\label{u3u4}
\end{align}
Note that $u_1^\ast=iu_2$ and $u_3^\ast =iu_4$ by Proposition \ref{proposition:sym1}.
A simple calculation shows that
\begin{align*}
u_l^+(x)&=e^{iI_l/h}u^+(x;\beta_l,y_l),&& u_l^-(x)=e^{-iI_l/h}u^-(x;\beta_l,\bar y_l),\\
u_r^-(x)&=e^{iI_r/h}u^-(x;\beta_r,\bar y_r),&& u_r^+(x)=e^{-iI_r/h}u^+(x;\beta_r, y_r),
\end{align*}
where $I_l$ and $I_r$ are the action integrals given by \eqref{actionintegraldoublelobe}.
In view of \eqref{v0finaldouble}--\eqref{u0finaldouble} we get
\begin{align}\label{u_l}
v_0/m_-&=-ie^{i I_l/h}u_1+ie^{-i I_l/h} u_2,\\
u_0/l_-&=-ie^{i I_r/h}u_3+ie^{-i I_r/h} u_4.
\label{u_r}
\end{align}

Inspired by the analysis in \cite{mecherout2016pt} we define the central solutions 
\begin{equation}\label{central}
v_l=e^{-J/h}\frac{1}{2i}(u_3-u_4),\quad v_r=e^{-J/h}\frac{1}{2i}(u_1-u_2).
\end{equation}
Since $J^\ast=J$ by Proposition \ref{proposition:sym2} we have $v_l^\ast=iv_l$ and $v_r^\ast=iv_r$.

\begin{lem}\label{lem:centrallinindep}
The central solutions $v_l$ and $v_r$ are linearly independent if $h$ is sufficiently small.
\end{lem}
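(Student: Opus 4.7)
The plan is to verify linear independence by computing the Wronskian $\mathcal W(v_l, v_r)$, which is constant in $x$ and vanishes precisely when $v_l$ and $v_r$ are proportional. From the definition \eqref{central} and bilinearity,
\begin{equation*}
\mathcal W(v_l, v_r) = -\frac{e^{-2J/h}}{4}\big[\mathcal W(u_3, u_1) - \mathcal W(u_3, u_2) - \mathcal W(u_4, u_1) + \mathcal W(u_4, u_2)\big],
\end{equation*}
so the task reduces to computing the four Wronskians on the right and identifying the dominant contribution as $h \to 0$.

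The first step would be to bring all four Wronskians into a form where the formulas \eqref{Wronskian1+} and \eqref{Wronskian2} apply directly. Since $u_1, u_2$ are based at $\beta_l$ while $u_3, u_4$ are based at $\beta_r$, I would rewrite $u_3$ and $u_4$ using the phase-base-point identity $u^\pm(x;\beta_r,\tilde x) = e^{\mp J/h}\, u^\pm(x;\beta_l,\tilde x)$, which follows from $z(x;\beta_r) = z(x;\beta_l) - J$ on the region between the middle turning points (with $J$ as in \eqref{actionintegralJ}, and modulo the branch tracking of \S\ref{ss:stokesgeometry}). Each Wronskian then involves WKB solutions sharing the common phase base point $\beta_l$ and can be evaluated explicitly via \eqref{Wronskian1+} (for opposite types) or \eqref{Wronskian2} (for the same type), producing expressions in the amplitude functions $w^\pm_{\mathrm{even/odd}}$ whose leading order behavior is controlled by Remark \ref{h-asymptotics}.

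The second step is to isolate the dominant contribution. I expect $\mathcal W(u_3, u_1) = 4i\tau_l\tau_r e^{J/h}\, w^+_{\mathrm{even}}(\bar y_r, h; y_l)$ to be the leading term, with $w^+_{\mathrm{even}}(\bar y_r, h; y_l) = 1 + O(h)$ along a path from $y_l$ down to $\bar y_r$ through the region between $\beta_l$ and $\beta_r$, on which $\re z$ increases top-to-bottom and left-to-right by \eqref{eq:Taylor1}--\eqref{eq:Taylor2}. The remaining three Wronskians are strictly smaller: $\mathcal W(u_3, u_2)$ and $\mathcal W(u_4, u_1)$ carry factors $w^\pm_{\mathrm{odd}} = O(h)$, while $\mathcal W(u_4, u_2)$ gains an extra $e^{-J/h}$ from the base-point change applied to $u_4$. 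Assembling these estimates produces
\begin{equation*}
\mathcal W(v_l, v_r) = -i\tau_l\tau_r\, e^{-J/h}\big(1 + O(h)\big),
\end{equation*}
and this is nonzero for all sufficiently small $h > 0$ since $\tau_l \tau_r = 1 + O(h)$ by \eqref{cldouble}--\eqref{crdouble}, establishing the claim.

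The hardest part is not the algebra but the bookkeeping: one has to keep careful track of the branch determinations of $z$ and $H$ as the four Wronskians cross several Stokes regions of the double-lobe configuration, and check that for the base points $y_l, \bar y_l, y_r, \bar y_r$ chosen as in Figure \ref{figure4} there really do exist paths on which $\re z$ is monotone in the appropriate direction, so that the asymptotic expansions from Remark \ref{h-asymptotics} apply with the leading constants stated above. Both are essentially read off from the Stokes geometry of Figure \ref{fig:stokes_config_double} and the conventions of \S\ref{ss:stokesgeometry}.
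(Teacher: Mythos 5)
Your decomposition of $\mathcal W(v_l,v_r)$ into four Wronskians is the right starting point and matches the paper, but the dominance analysis that follows is wrong, and the error is precisely in the Riemann-surface bookkeeping you flag at the end as the hardest part. You claim $\mathcal W(u_3,u_1)$ is the single dominant term, with $\mathcal W(u_3,u_2)$ and $\mathcal W(u_4,u_1)$ smaller by a factor $w^\pm_{\mathrm{odd}}=O(h)$ and $\mathcal W(u_4,u_2)$ smaller by $e^{-2J/h}$. In fact all four Wronskians are of the \emph{same} order $e^{J/h}$, with even-type amplitude factors $1+O(h)$; see Corollary~\ref{cor:Wronskians}. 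The parenthesis in the Wronskian of $v_l,v_r$ is $4+O(h)$, not $1+O(h)$.

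Here is why the naive estimates fail. Consider $\mathcal W(u_1,u_4)$, where $u_1$ and $u_4$ are both of ``$+$'' type. If you change the phase base point via $u^+(x;\beta_r,y_r)=e^{-J/h}u^+(x;\beta_l,y_r)$ and then apply the same-type formula \eqref{Wronskian2}, you get a factor $e^{-J/h}e^{2z(y_l;\beta_l)/h}w^+_{\mathrm{odd}}(y_l;y_r)$. But $y_l$ and $y_r$ lie in different Stokes regions separated by a branch cut at $\beta_r$, so there is no path from $y_r$ to $y_l$ along which $\re z$ is monotone; Remark~\ref{h-asymptotics} does not apply, and $w^+_{\mathrm{odd}}(y_l;y_r)$ is in fact exponentially large, not $O(h)$. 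The correct computation, carried out in Lemma~\ref{appendix:Wronskians2}(ii), uses Lemma~\ref{rewritinglemma} to re-express $u^+(x;\beta_r,y_r)$, after continuation across the branch cut at $\beta_r$, as $\pm i\,u^-(\hat x;\beta_r,\hat y_r)$ (up to case-dependent sign). From the vantage point of $y_l$ the solution is then of ``$-$'' type, so the \emph{opposite}-type formula \eqref{Wronskian1+} applies, giving an \emph{even} amplitude along a genuinely monotone path, of size $1+O(h)$, times $e^{J/h}$. The same mechanism applies to $\mathcal W(u_3,u_2)$.

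For $\mathcal W(u_4,u_2)$ your reasoning via the base-point change of $u_4$ alone is also misleading for the same reason: the residual amplitude function is not $1+O(h)$ along the paths you have in mind. The clean way to see the true order is the symmetry $u_4=-iu_3^\ast$, $u_2=-iu_1^\ast$, which gives $\mathcal W(u_4,u_2)=\bigl(\mathcal W(u_1,u_3)\bigr)^\ast$. Since $J^\ast=J$, this is again of order $e^{J/h}$, as in Corollary~\ref{cor:Wronskians}(iv). The end conclusion (the Wronskian of $v_l,v_r$ is $\approx -4ie^{-J/h}\neq0$ for small $h$, hence linear independence) still holds, but your intermediate estimates for three of the four Wronskians are off by large factors; the same flawed bookkeeping would give incorrect coefficients in Lemma~\ref{lem:d_jk} and ultimately in the splitting asymptotics of Theorem~\ref{splitting}, so this is not a harmless shortcut.
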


\begin{proof}
We prove linear independence by showing that the Wronskian of $v_l$ and $v_r$ is nonzero for small $h$. By \eqref{central} we have
$$
\mathcal W(v_l,v_r)=-\frac{e^{-2J/h}}{4}\big(\mathcal W(u_3,u_1)-\mathcal W(u_4,u_1)-\mathcal W(u_3,u_2)+\mathcal W(u_4,u_2)\big),
$$
so an application of Corollary \ref{cor:Wronskians} gives
\begin{align*}
\mathcal W(v_l,v_r)&=-ie^{-J/h}\big(\tau_l\tau_rw_\mathrm{even}^+(\bar y_r,y_l)+\tau_l\tau_r^\myconj w_\mathrm{even}^+(\check y_l,y_r)\\&\quad\quad\quad\quad\quad\quad+\tau_l^\myconj \tau_rw_\mathrm{even}^+(\bar y_r,\hat{\bar{y}}_l)+\tau_l^\myconj \tau_r^\myconj w_\mathrm{even}^+(\bar y_l,y_r)\big),
\end{align*}
where the expression in parenthesis is $4+O(h)$ as $h\to0$.
\end{proof}

Now write 
\begin{align}\label{v_ld}
v_l&=d_{11}u_1+d_{12}u_2,\\
v_r&=d_{21}u_3+d_{22}u_4.
\label{v_rd}
\end{align}
By Lemma \ref{lem:centrallinindep}, the relations \eqref{v_ld}--\eqref{v_rd} constitute an invertible change of basis, and a straightforward calculation yields
$$
\begin{pmatrix} u_1\\ u_2\end{pmatrix}=
D_l\begin{pmatrix} v_l\\ v_r\end{pmatrix},\quad D_l=\frac{1}{d_{11}+d_{12}}\begin{pmatrix} 1& 2i e^{J/h} d_{12} \\ 1& -2i e^{J/h} d_{11} \end{pmatrix}
$$
and 
$$
\begin{pmatrix} u_3\\ u_4\end{pmatrix}=
D_r\begin{pmatrix} v_l\\ v_r\end{pmatrix},\quad D_r=\frac{1}{d_{21}+d_{22}}\begin{pmatrix}  2i e^{J/h} d_{22} &1\\ -2i e^{J/h} d_{21} &1\end{pmatrix}.
$$
Recall that $\lambda=i\mu$ is an eigenvalue precisely when $\det (v_0\ u_0)=0$. Since $v_l$ and $v_r$ are linearly independent by Lemma \ref{lem:centrallinindep}, a straightforward computation using \eqref{u_l}--\eqref{u_r} shows that $\det (v_0\ u_0)=0$ is equivalent to 
$$
\det\begin{pmatrix} \begin{pmatrix}-ie^{iI_l/h} & ie^{-iI_l/h}\end{pmatrix} D_l \\  \begin{pmatrix}-ie^{iI_r/h}& ie^{-iI_r/h}\end{pmatrix} D_r\end{pmatrix}=0,
$$
i.e.,
\begin{align*}
0&=\frac{1}{i}\Big(e^{iI_l/h}-e^{-iI_l/h}\Big)
\frac{1}{i}\Big(e^{iI_r/h}-e^{-iI_r/h}\Big)\\
&\quad-4e^{2J/h}\Big(e^{iI_l/h}d_{12}+e^{-iI_l/h}d_{11}\Big)
\Big( e^{iI_r/h}d_{22}+e^{-iI_r/h}d_{21}\Big).
\end{align*}
We rewrite this as 
\begin{align}\label{def:f}
0&=\Big(e^{iI_l/h}d_{12}+e^{-iI_l/h}d_{11}\Big)
\Big( e^{iI_r/h}d_{22}+e^{-iI_r/h}d_{21}\Big)\\&\quad
-e^{-2J/h}\sin(I_l/h)\sin(I_r/h).
\notag
\end{align}

\begin{lem}\label{lem:d_jk}
Let $\mu\in B_\ve(\mu_0)$. In the case when $V(\beta_l)=\pm V(\beta_r)$ we have
\begin{align*}
d_{12}&=-\gamma_l, &&
d_{11}= -\gamma_l^\myconj,\\
d_{22}&= \mp \gamma_r, &&
d_{21}= \mp\gamma_r^\myconj,
\end{align*}
where $\gamma_\bullet$ are $1+O(h)$ as $h\to0$ for $\bullet=l,r$.
In addition, if $V(x) = \pm V(-x)$ for $x\in\R$ then $\gamma_l=\gamma_r$.
\end{lem}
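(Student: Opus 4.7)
The plan is to apply Cramer's rule to the change of basis \eqref{v_ld}--\eqref{v_rd} and evaluate the resulting Wronskians. First, I would exploit the symmetry to halve the work: since $v_l^\myconj = iv_l$, $u_1^\myconj = iu_2$, and (applying $\myconj$ again) $u_2^\myconj = iu_1$, taking $\myconj$ of \eqref{v_ld} yields $d_{11} = d_{12}^\myconj$. The same argument applied to $v_r$, $u_3$, $u_4$ gives $d_{21} = d_{22}^\myconj$. Therefore it suffices to establish that $d_{12} = -\gamma_l$ and $d_{22} = \mp\gamma_r$ with $\gamma_\bullet = 1 + O(h)$, since the desired relations for $d_{11}$ and $d_{21}$ will then be automatic.

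The Cramer's rule expressions are
\begin{equation*}
d_{12} = \frac{\mathcal{W}(u_1, v_l)}{\mathcal{W}(u_1, u_2)}, \qquad d_{22} = \frac{\mathcal{W}(u_3, v_r)}{\mathcal{W}(u_3, u_4)}.
\end{equation*}
The denominators are computed directly from \eqref{Wronskian1+} since $u_1, u_2$ share phase base point $\beta_l$, and $u_3, u_4$ share phase base point $\beta_r$. Using \eqref{u1u2}--\eqref{u3u4}, a short calculation gives
\begin{equation*}
\mathcal{W}(u_1, u_2) = 4\tau_l \tau_l^\myconj w^+_{\mathrm{even}}(\bar y_l, h; y_l), \qquad \mathcal{W}(u_3, u_4) = \mp 4\tau_r \tau_r^\myconj w^+_{\mathrm{even}}(\bar y_r, h; y_r),
\end{equation*}
both of which are $\pm 4 + O(h)$ by Remark \ref{h-asymptotics} and the asymptotics $\tau_\bullet = 1 + O(h)$ of \eqref{cldouble}--\eqref{crdouble}.

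For the numerators, by \eqref{central} we have
\begin{equation*}
\mathcal{W}(u_1, v_l) = \frac{e^{-J/h}}{2i}\bigl[\mathcal{W}(u_1, u_3) - \mathcal{W}(u_1, u_4)\bigr],
\end{equation*}
and similarly for $\mathcal{W}(u_3, v_r)$. Each cross Wronskian involves solutions with different phase base points ($\beta_l$ versus $\beta_r$). To apply \eqref{Wronskian1+} I would first rewrite, say, $u^-(x;\beta_r,\bar y_r) = e^{z(\beta_r;\beta_l)/h} u^-(x;\beta_l,\bar y_r)$; since $\beta_l<\beta_r$ lies in the Stokes region between the lobes, the formula \eqref{eq:zinleftright} yields $z(\beta_r;\beta_l) = J(\mu)$, producing exactly the factor $e^{J/h}$ that cancels the $e^{-J/h}$ in the definition of $v_l$. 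The resulting Wronskians, now with matching phase base point, can be evaluated by \eqref{Wronskian1+} and \eqref{Wronskian1-} and identified with the four amplitude functions already catalogued in the proof of Lemma \ref{lem:centrallinindep}. Combining, each $d_{jk}$ is a finite, bounded quantity with leading term $\pm 1$ and correction $O(h)$, giving $d_{12} = -\gamma_l$ and $d_{22} = \mp\gamma_r$ with $\gamma_\bullet = 1 + O(h)$.

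For the final assertion, if $V(x) = \pm V(-x)$, then the base points have been chosen so that $y_l = -\bar y_r$, $\alpha_l = -\alpha_r$, $\beta_l = -\beta_r$, and it was observed just before the lemma that $\tau_l = \tau_r$. Applying Proposition \ref{proposition:sym5} to each of the solutions entering $d_{12}$ and $d_{22}$ then converts the expression for $\gamma_r$ into that for $\gamma_l$ after the change of variable $x \mapsto -x$, which yields $\gamma_l = \gamma_r$. The main obstacle I anticipate is the careful bookkeeping of sheets: the amplitude base points $y_l, \bar y_l, y_r, \bar y_r$ lie in different Stokes regions, and after changing the phase base point and invoking \eqref{Wronskian1+} or \eqref{Wronskian1-} one must use Lemma \ref{rewritinglemma} to ensure the relevant amplitude base points lie in a common Stokes domain where the asymptotic expansion of Remark \ref{h-asymptotics} is valid---this is precisely why the four distinct amplitude functions (evaluated at $y_l, \hat{\bar y}_l, \check y_l, \bar y_l$, etc.) appear in the proof of Lemma \ref{lem:centrallinindep} and must reappear here.
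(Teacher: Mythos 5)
Your proposal follows essentially the same route as the paper: use the $\myconj$-symmetry on \eqref{v_ld}--\eqref{v_rd} to reduce to computing $d_{12}$ and $d_{22}$, apply Cramer's rule with Wronskians, and evaluate the cross Wronskians by re-referencing the phase base point (producing the $e^{J/h}$ factor that cancels the $e^{-J/h}$ in \eqref{central}); the paper packages the Wronskian evaluations in Corollary \ref{cor:Wronskians} but the content is identical. The only cosmetic difference is at the end: the paper first establishes $d_{12}=\pm d_{22}$ via Remark \ref{rem:clequalscr} and reads off $\gamma_l=\gamma_r$ from the already-computed formulas, whereas you invoke Proposition \ref{proposition:sym5} directly on the Wronskian expressions---but that is exactly the tool underlying Remark \ref{rem:clequalscr}, so the two are really the same argument.
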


\begin{proof}
We first note that since $v_l^\ast=iv_l$, $u_1^\ast=iu_2$ we have $d_{11}^\ast=d_{12}$ in view of \eqref{v_ld}. Similarly, $d_{21}^\ast=d_{22}$ by \eqref{v_rd}. Using the arguments in Remark \ref{rem:clequalscr} it is easy to check that in the symmetric case $V(x)=\pm V(-x)$ we have $d_{12}=\pm d_{22}$.

We thus need to calculate $d_{12}$ and $d_{22}$, and begin with $d_{12}$. By \eqref{central} and \eqref{v_ld} we have
$$
d_{12}=\frac{\mathcal W(u_1,v_l)}{\mathcal W(u_1,u_2)}=\frac{e^{-J/h}}{2i}\frac{\mathcal W(u_1,u_3)-\mathcal W(u_1,u_4)}{\mathcal W(u_1,u_2)}.
$$ 
An application of Corollary \ref{cor:Wronskians} in the appendix therefore gives $d_{12}=-\gamma_l$ where
$$
\gamma_l=\frac{\tau_rw_\mathrm{even}^+(\bar y_r,y_l)+\tau_r^\myconj w_\mathrm{even}^+(\hat y_r,y_l)}{2\tau_l^\ast w_\mathrm{even}^+(\bar y_l,y_l)}
$$
so that $\gamma_l=1+O(h)$ as $h\to0$ by Remark \ref{h-asymptotics} and \eqref{cldouble}--\eqref{crdouble}.

Next, 
$$
d_{22}=\frac{\mathcal W(u_3,v_r)}{\mathcal W(u_3,u_4)}=\frac{e^{-J/h}}{2i}\frac{\mathcal W(u_3,u_1)-\mathcal W(u_3,u_2)}{\mathcal W(u_3,u_4)}.
$$ 
Using Corollary \ref{cor:Wronskians} we then get
$$
d_{22}=\mp \frac{\tau_lw_\mathrm{even}^+(\bar y_r,y_l)+ \tau_l^\myconj w_\mathrm{even}^+(\bar y_l,\hat{\bar{y}}_r)}{ 2\tau_r^\myconj w_\mathrm{even}^+(\bar y_r,y_r)}.
$$
We see that $d_{22}=\mp \gamma_r$ where $\gamma_r=1+O(h)$ as $h\to0$. 
\end{proof}

Combining \eqref{def:f} and Lemma \ref{lem:d_jk} we obtain the following Bohr-Sommerfeld quantization condition.

\begin{thm}\label{qc4}
Assume that $V$ is a double-lobe potential near $\mu_0$ and that $V(\beta_l)=\pm V(\beta_r)$. Then then there exist positive constants $\ve,h_{0}$ and functions $\gamma_\bullet(\mu,h)$, $\bullet=l,r$, defined on $B_\ve(\mu_0) \times (0,h_{0}]$ with $\gamma_\bullet=1+O(h)$ as $h\to0$, such that if $\mu \in B_\ve(\mu_0)$ then $\lambda = i\mu$ is an eigenvalue if and only if
\begin{align*}
\Big(e^{iI_l/h}\gamma_l+e^{-iI_l/h}\gamma_l^\myconj\Big)
\Big( e^{iI_r/h}\gamma_r+e^{-iI_r/h}\gamma_r^\myconj\Big)
\mp e^{-2J/h}\sin(I_l/h)\sin(I_r/h)=0.
\end{align*}
If $V(x)=\pm V(-x)$ for $x\in\R$ then $\gamma_l=\gamma_r$.
\end{thm}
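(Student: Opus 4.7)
The plan is to observe that almost all the work has been done, and the theorem follows by assembling three ingredients: equation \eqref{def:f}, Lemma \ref{lem:d_jk}, and a final bookkeeping of signs. First I would recall how the eigenvalue condition $\mathcal{W}(v_0,u_0)=0$ has already been reduced to \eqref{def:f}. The derivation passes $v_0$ and $u_0$ through three bases: expressing them via $u_l^\pm,u_r^\pm$ yields \eqref{v0finaldouble}--\eqref{u0finaldouble}; rewriting these in the normalized basis $\{u_1,u_2\},\{u_3,u_4\}$ from \eqref{u1u2}--\eqref{u3u4} gives \eqref{u_l}--\eqref{u_r}; finally changing basis once more into the central solutions $\{v_l,v_r\}$ (which are linearly independent by Lemma \ref{lem:centrallinindep}) produces the $2\times 2$ determinantal condition that, after explicit evaluation, is \eqref{def:f}.

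Next I would plug the coefficients from Lemma \ref{lem:d_jk} into \eqref{def:f}. Writing $d_{11}=-\gamma_l^\myconj$, $d_{12}=-\gamma_l$, $d_{21}=\mp\gamma_r^\myconj$, $d_{22}=\mp\gamma_r$, the first factor in \eqref{def:f} becomes $-(e^{iI_l/h}\gamma_l+e^{-iI_l/h}\gamma_l^\myconj)$ and the second factor becomes $\mp(e^{iI_r/h}\gamma_r+e^{-iI_r/h}\gamma_r^\myconj)$, so the product acquires an overall factor of $\pm 1$. Multiplying the entire equation through by this $\pm 1$ converts \eqref{def:f} into
\begin{equation*}
\Big(e^{iI_l/h}\gamma_l+e^{-iI_l/h}\gamma_l^\myconj\Big)\Big(e^{iI_r/h}\gamma_r+e^{-iI_r/h}\gamma_r^\myconj\Big)\mp e^{-2J/h}\sin(I_l/h)\sin(I_r/h)=0,
\end{equation*}
with the $\mp$ in the last term exactly tracking the $\pm$ in $V(\beta_l)=\pm V(\beta_r)$.

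The size estimate $\gamma_\bullet=1+O(h)$ as $h\to 0$ is already part of Lemma \ref{lem:d_jk}, and the constants $\ve,h_0$ are inherited from the construction of the WKB objects $\tau_l,\tau_r$ in \eqref{cldouble}--\eqref{crdouble}. For the final sentence of the theorem, when $V(x)=\pm V(-x)$, the equality $\gamma_l=\gamma_r$ follows immediately from the corresponding statement in Lemma \ref{lem:d_jk} (which itself was proved using $m_-=l_-$, $\tau_l=\tau_r$ from Remark \ref{rem:clequalscr} and the analogous symmetry argument applied to the coefficients $d_{12}, d_{22}$). There is no genuine obstacle here; the only care needed is the sign bookkeeping in the $\mp$, and I would present the proof essentially as a one-paragraph substitution referencing \eqref{def:f}, Lemma \ref{lem:d_jk}, and Remark \ref{rem:clequalscr}.
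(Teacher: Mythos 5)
Your proof is correct and follows exactly the same route as the paper, which states the theorem as an immediate consequence of substituting the coefficient formulas from Lemma \ref{lem:d_jk} into equation \eqref{def:f}. The sign bookkeeping is handled properly: the first factor picks up $(-1)$, the second picks up $(\mp 1)$, giving a prefactor of $\pm 1$ on the product, and multiplying the whole equation by that same $\pm 1$ turns the $-$ in front of $e^{-2J/h}\sin(I_l/h)\sin(I_r/h)$ into $\mp$, as claimed.
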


\section{Symmetric potentials}\label{section:symmetric}

The purpose of this section is to prove Theorem \ref{splitting}. When doing so it will be convenient to use the following alternative form of Theorem \ref{qc4}.

\begin{prop}\label{prop:qc4}
Assume that $V$ is a double-lobe potential near $\mu_0$ and that 
$V(\beta_l)=\pm V(\beta_r)$. Then
there exist positive constants $\ve,h_{0}$ and functions $\rho_l,\rho_r$ defined on $B_\ve(\mu_0) \times (0,h_{0}]$ such that $\lambda = i\mu$, $\mu \in B_\ve(\mu_0)$, is an eigenvalue of $P(h)$ for $h \in (0, h_0]$ if and only if
\begin{align*}
4\rho_l\rho_r\cos(\widetilde I_l/h)\cos(\widetilde I_r/h)\mp e^{-2J/h}\sin(I_l/h)\sin(I_r/h)=0,
\end{align*}
where $\widetilde I_\bullet=I_\bullet+h\theta_\bullet=I_\bullet+O(h^2)$ and $\rho_\bullet=1+O(h)$ satisfy $\widetilde I_\bullet=\widetilde I_\bullet^\myconj$ and $\rho_\bullet=\rho_\bullet^\myconj$, $\bullet=l,r$. If $V(x)=\pm V(-x)$ then $I_l=I_r$ and $\widetilde I_l=\widetilde I_r$.
\end{prop}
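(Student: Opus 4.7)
The plan is to derive Proposition \ref{prop:qc4} from Theorem \ref{qc4} by rewriting the first factor of the left-hand side of the quantization condition
\[
\Bigl(e^{iI_l/h}\gamma_l+e^{-iI_l/h}\gamma_l^\myconj\Bigr)\Bigl(e^{iI_r/h}\gamma_r+e^{-iI_r/h}\gamma_r^\myconj\Bigr)
\mp e^{-2J/h}\sin(I_l/h)\sin(I_r/h)=0
\]
into polar form $2\rho_\bullet\cos(\widetilde I_\bullet/h)$. Since $\gamma_\bullet(\mu,h)=1+O(h)$ as $h\to 0$, uniformly for $\mu\in B_\ve(\mu_0)$, and since the same bound applies to $\gamma_\bullet^\myconj$ (because $\mu_0\in\R$ forces $\bar\mu\in B_\ve(\mu_0)$), the ratio $\gamma_\bullet/\gamma_\bullet^\myconj$ stays in a small neighborhood of $1$ for small $h$. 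Thus we may unambiguously define
\[
\rho_\bullet(\mu,h)=\sqrt{\gamma_\bullet(\mu,h)\,\gamma_\bullet^\myconj(\mu,h)},\qquad
\theta_\bullet(\mu,h)=\frac{1}{2i}\log\!\left(\frac{\gamma_\bullet(\mu,h)}{\gamma_\bullet^\myconj(\mu,h)}\right),
\]
using the principal branches of the square root and the logarithm, so that $\gamma_\bullet=\rho_\bullet e^{i\theta_\bullet}$ and $\gamma_\bullet^\myconj=\rho_\bullet e^{-i\theta_\bullet}$. Setting $\widetilde I_\bullet(\mu,h)=I_\bullet(\mu)+h\theta_\bullet(\mu,h)$, a direct computation gives
\[
e^{iI_\bullet/h}\gamma_\bullet+e^{-iI_\bullet/h}\gamma_\bullet^\myconj
=\rho_\bullet\Bigl(e^{i(I_\bullet/h+\theta_\bullet)}+e^{-i(I_\bullet/h+\theta_\bullet)}\Bigr)
=2\rho_\bullet\cos(\widetilde I_\bullet/h),
\]
which inserted into the quantization condition yields the claimed identity.

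Next I would verify the required properties of $\rho_\bullet$ and $\widetilde I_\bullet$. Because $\gamma_\bullet=1+O(h)$ and $\gamma_\bullet^\myconj=1+O(h)$ as $h\to 0$, it follows that $\rho_\bullet=1+O(h)$ and $\theta_\bullet=O(h)$, so $\widetilde I_\bullet=I_\bullet+O(h^2)$. For the symmetry properties, the involution $f\mapsto f^\myconj$ from \S\ref{ss:symmetry} satisfies $(fg)^\myconj=f^\myconj g^\myconj$ and is an involution, so $(\gamma_\bullet\gamma_\bullet^\myconj)^\myconj=\gamma_\bullet^\myconj\gamma_\bullet=\gamma_\bullet\gamma_\bullet^\myconj$; taking the principal square root gives $\rho_\bullet^\myconj=\rho_\bullet$. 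Similarly, $(\gamma_\bullet/\gamma_\bullet^\myconj)^\myconj=\gamma_\bullet^\myconj/\gamma_\bullet$, so $(e^{i\theta_\bullet})^\myconj=e^{-i\theta_\bullet}$, which unwrapping the definition of the $\myconj$-involution yields $\theta_\bullet^\myconj=\theta_\bullet$. Combined with Proposition \ref{proposition:sym2} giving $I_\bullet^\myconj=I_\bullet$, this proves $\widetilde I_\bullet^\myconj=\widetilde I_\bullet$.

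For the last assertion of the proposition, if $V(x)=\pm V(-x)$ for $x\in\R$, then Theorem \ref{qc4} gives $\gamma_l=\gamma_r$, hence $\rho_l=\rho_r$ and $\theta_l=\theta_r$ by construction. Combined with the identity $I_l=I_r$ from Proposition \ref{proposition:sym2}, this yields $\widetilde I_l=\widetilde I_r$, as required. I do not expect a real obstacle here; the essential analytic work (the refined Wronskian computations producing $\gamma_l,\gamma_r$ and the explicit $e^{-2J/h}$ term) has already been carried out in Theorem \ref{qc4}, and what remains is a polar-decomposition bookkeeping together with an application of the involution identities from \S\ref{ss:symmetry}.
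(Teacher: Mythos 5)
Your proof is correct and follows essentially the same route as the paper: a polar decomposition $\gamma_\bullet=\rho_\bullet e^{i\theta_\bullet}$ with $\rho_\bullet=(\gamma_\bullet\gamma_\bullet^\myconj)^{1/2}$ and $\theta_\bullet=\tfrac{1}{2i}\log(\gamma_\bullet/\gamma_\bullet^\myconj)$, verification of $\rho_\bullet^\myconj=\rho_\bullet$ and $\theta_\bullet^\myconj=\theta_\bullet$ via the involution identities, and Euler's formula to collapse each factor into $2\rho_\bullet\cos(\widetilde I_\bullet/h)$. The paper's proof (adapted from Mecherout--Boussekkine--Ramond--Sj\"ostrand) is identical in substance.
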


\begin{proof}
We adapt the arguments in \cite[pp.~878--879]{mecherout2016pt} to our situation. By Theorem \ref{qc4}, $\lambda=i\mu$ is an eigenvalue if and only if
\begin{align*}
\Big(e^{iI_l/h}\gamma_l+e^{-iI_l/h}\gamma_l^\myconj\Big)
\Big( e^{iI_r/h}\gamma_r+e^{-iI_r/h}\gamma_r^\myconj\Big)
\mp e^{-2J/h}\sin(I_l/h)\sin(I_r/h)=0,
\end{align*}
where $\gamma_\bullet,\gamma_\bullet^\myconj$ are $1+O(h)$ as $h\to0$ for $\mu\in B_\ve(\mu_0)$. Write
\begin{align*}
\gamma_l&=(\gamma_l\gamma_l^\myconj)^{1/2}(\gamma_l/\gamma_l^\myconj)^{1/2}=\rho_l e^{i\theta_l},\\
\gamma_l^\myconj&=(\gamma_l\gamma_l^\myconj)^{1/2}(\gamma_l/\gamma_l^\myconj)^{-1/2}=\rho_l e^{-i\theta_l}
\end{align*}
where we choose branches of the square roots and the logarithm in such a way that $\rho_l=1+O(h)$, $\theta_l=O(h)$. Since 
$$
\rho_l^\myconj=\Big((\gamma_l\gamma_l^\myconj)^{1/2}\Big)^\myconj=\rho_l,
$$
it follows that 
$$
\rho_le^{-i\theta_l}=\gamma_l^\myconj=(\gamma_l)^\myconj=(\rho_le^{i\theta_l})^\myconj =\rho_l^\myconj e^{-i\theta_l^\myconj}=\rho_l e^{-i\theta_l^\myconj}
$$
so $\theta_l=\theta_l^\myconj$. Treating $\gamma_r,\gamma_r^\myconj$ the same way we find that $\lambda=i\mu$ is an eigenvalue if and only if
\begin{align*}
\rho_l\rho_r\Big(e^{i\widetilde I_l/h}+e^{-i\widetilde I_l/h}\Big)
\Big(e^{i\widetilde I_r/h}+e^{-i\widetilde I_r/h}\Big)\mp e^{-2J/h}\sin(I_l/h)\sin(I_r/h)=0,
\end{align*}
where $\widetilde I_\bullet=I_\bullet+h\theta_\bullet=I_\bullet+O(h^2)$ satisfies $\widetilde I_\bullet^\myconj=\widetilde I_\bullet$. Moreover, if $V(x)=\pm V(-x)$ then $\widetilde I_l=\widetilde I_r$ since $\gamma_l=\gamma_r$ then. The result now follows by an application of Euler's formula.
\end{proof}

\begin{rem}\label{rmk:derivative}
Note that $\widetilde I_\bullet$ is injective near $\mu_0$ if $h$ is sufficiently small. Indeed, since the amplitude functions $w^+_\mathrm{even}$ are analytic symbols with respect to the spectral parameter $\lambda=i\mu$ and $h>0$, inspecting their definition we see that $\gamma_\bullet=\tilde\gamma_\bullet+O(he^{-A/h})$ for any $A>0$, where 
$\partial\tilde \gamma_\bullet(\mu)/\partial\mu=O(h)$ uniformly for $\mu\in B_\ve(\mu_0)$, see the proof of Theorem \ref{pEV}. Define $\tilde\theta_\bullet$ via $e^{i\tilde\theta_\bullet}=(\tilde\gamma_\bullet/\tilde\gamma_\bullet^\myconj)^{1/2}$. Then $\theta_\bullet=\tilde\theta_\bullet+O(he^{-A/h})$, and differentiating the identity $e^{i\tilde\theta_\bullet}=(\tilde\gamma_\bullet/\tilde\gamma_\bullet^\myconj)^{1/2}$ gives 
$$
\partial\tilde\theta_\bullet/\partial\mu=O(h),\quad h\to0,\quad\bullet=l,r,
$$
since $e^{i\tilde\theta_\bullet}=1+O(h)$ by Taylor's formula. The claim now follows from Lemma \ref{lem:referencepoints}.
\end{rem}

\begin{proof}[Proof of Theorem \ref{splitting}]
If $V(x)$ is either an even or an odd function of $x\in\R$ then $\widetilde I_l=\widetilde I_r$ by Proposition \ref{prop:qc4}. Omitting the indices $l$ and $r$, the proposition then implies that
\begin{equation}\label{evenoddqc}
\cos(\tilde I(\mu)/h )  \pm R(\mu,h)=0,
\end{equation}
where
\begin{equation} \label{eq:R}
R(\mu,h)=\begin{cases} (2 \rho(\mu))^{-1}e^{-J(\mu)/h}\sin(I(\mu)/h) &\text{when }V(x)=V(-x),\\
( 2i \rho(\mu))^{-1}e^{-J(\mu)/h}\sin(I(\mu)/h)&\text{when }V(x)=-V(-x).
\end{cases}
\end{equation}
Since $J(\mu_0)>0$ and $\im I_\bullet(\mu_0)=0$ by definition we can choose $\ve>0$ small enough to ensure that, say,
\begin{equation}\label{assum:ve1}
\re J(\mu)\ge \tfrac{1}{2}J(\mu_0),\quad \lvert\im I_\bullet(\mu)\rvert\le \tfrac{1}{4} J(\mu_0),\quad \mu\in B_\ve(\mu_0).
\end{equation}
Using \eqref{assum:ve1} it is not difficult to check that $R(\mu,h)=O(e^{-\frac{1}{4}J(\mu_0)/h})$ for $\mu\in B_\ve(\mu_0)$.
By \eqref{evenoddqc} we must then have $\tilde I(\mu)/h=x_k(h)+y_k$ where $y_k=(k+\frac{1}{2})\pi$ for some integer $k$ and $x_k(h)=\tilde I(\mu)/h-y_k$ is close to zero. Hence,
$$
\cos(\tilde I(\mu)/h ) = \cos(x_k(h)+y_k)=(-1)^{k+1}\sin(\tilde I(\mu)/h-y_k),
$$
which together with \eqref{evenoddqc} implies that
\begin{equation}\label{mu+-}
\tilde I(\mu)=\Big(k+\frac{1}{2}\Big)\pi h\pm(-1)^{k}h\arcsin(R(\mu,h))
\end{equation}
when $\mu$ satisfies \eqref{evenoddqc}. By Remark \ref{rmk:derivative} and Lemma \ref{lem:referencepoints}, $\tilde I=I+h\theta$ is injective near $\mu_0$ for all sufficiently small $h$, so that the roots $\mudw$ to $\tilde I(\mu)=(k+\frac{1}{2})\pi h$ are unique and real. Moreover, for such $h$ there are precisely two solutions of \eqref{mu+-} which are denoted by $\mu_k^\pm$, and since $\tilde I^\ast=\tilde I$ while $R^\ast=\pm R$ these solutions must satisfy
\begin{equation}\label{symmetricpoints}
\overline{\mu^\pm_k}=\mu^\pm_k\quad\text{or}\quad\overline{\mu^\pm_k}=\mu^\mp_k.
\end{equation}
By Taylor expanding $\tilde I(\mu^\pm_k)$ near $\mudw$ and using $R(\mu^\pm_k,h)=O(e^{-\frac{1}{4}J(\mu_0)/h})$ we obtain the auxiliary estimate $\lvert\mu_k^\pm-\mudw\rvert=O(he^{-\frac{1}{4}J(\mu_0)/h})$.

We now improve this estimate by noting that $J(\mu^\pm_k)=J(\mudw)+O(he^{-\frac{1}{4}J(\mu_0)/h})$ by Taylor's formula, which implies that
$$
e^{-J(\mu^\pm_k)/h}=e^{-J(\mudw)/h}(1+O(h^\infty)).
$$
Since $\tilde I(\mu) = I(\mu) + h\theta=I(\mu)+O(h^2)$, we find by \eqref{mu+-} that
$$
\sin\big(I(\mu_k^\pm)/h\big)=\sin(\tilde I(\mu_k^\pm)/h-\theta(\mu_k^\pm))
=(-1)^k(1+O(h^2)).
$$
Hence, by \eqref{eq:R} we have
\begin{equation}\label{eq:R2}
R(\mu^\pm_k,h)=c(-1)^ke^{-J(\mudw)/h}(1+O(h)) 
\end{equation}
where $c=\frac{1}{2}$ when $V$ is even and $c=\frac{1}{2i}$ when $V$ is odd. By Taylor expanding $\tilde I(\mu^\pm_k)$ near $\mudw$ and using \eqref{mu+-} and \eqref{eq:R2} we obtain the improved estimate $\lvert\mu_k^\pm-\mudw\rvert=O(he^{-J(\mudw)/h})$. 
On the other hand, 
\begin{align*}
\tilde I(\mu_k^\pm) &=\tilde I(\mudw) +\tilde I^{\prime}(\mudw)(\mu_k^\pm-\mudw)+O(h^2e^{-2J(\mudw)/h}) \\
&=\tilde I(\mudw) + I^{\prime}(\mudw)(\mu_k^\pm-\mudw)+O(h^2e^{-J(\mudw)/h}) \end{align*}
by Remark \ref{rmk:derivative}, and hence a straightforward computation gives
$$
\cos(\tilde I(\mu^\pm_k)/h ) = (-1)^{k+1} I'(\mudw)\frac {\mu_k^\pm-\mudw}{h}+O(he^{-J(\mudw)/h}). 
$$
By combining this identity with \eqref{evenoddqc} and \eqref{eq:R2} we obtain the asymptotic formulas (1) and (2) of Theorem \ref{splitting}. The final statement of the theorem is then an immediate consequence of taking complex conjugates of these formulas and applying the symmetry relations \eqref{symmetricpoints}.
\end{proof}

\appendix

\section{}

\subsection{Connection formulas between the left and right lobe}

Here we compute Wronskians between WKB solutions defined near the left and right lobes. The proofs of these results could be obtained by inspecting the proofs in \cite[Sec.~III]{Hirota2017Real} and \cite[Sec.~5]{fujiie2018quantization}, and are included here for the benefit of the reader. To shorten notation we mostly omit $h$-dependence from the expressions below. We always assume that $V(\beta_l(\mu_0))>0$ and that $\mu$ belongs to some small neighborhood $B_\ve(\mu_0)$. 
We shall often refer to the case when $V(\beta_l(\mu))=V(\beta_r(\mu))$ as case $1^\circ$ and $V(\beta_l(\mu))=-V(\beta_r(\mu))$ as case $2^\circ$, which is in accordance with the terminology used by Fujii{\'e} and Wittsten \cite{fujiie2018quantization}.

\begin{lem}\label{appendix:Wronskians1}
Let $u^+(x;\beta_\bullet,y_\bullet)$ and $u^-(x;\beta_\bullet,\bar y_\bullet)$, $\bullet=l,r$, be given by \eqref{exactWKB}.
Then 
$$
\mathcal W(u^+(x;\beta_\bullet,y_\bullet),u^-(x;\beta_\bullet,\bar y_\bullet))=4iw_\mathrm{even}^+(\bar y_\bullet;y_\bullet),\quad \bullet=l,r,
$$
where the amplitude function appearing on the right is $1+O(h)$ as $h\to0$ for $\mu\in B_\ve(\mu_0)$ and $\bullet=l,r$.
\end{lem}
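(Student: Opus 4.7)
The plan is to derive the claimed identity as a direct application of the Wronskian formula \eqref{Wronskian1+} established in Section \ref{section:preliminaries}, and then to verify that the amplitude factor on the right-hand side has the stated asymptotic behavior by checking the Stokes geometry near the relevant turning points.

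For the identity itself, I would simply invoke \eqref{Wronskian1+} with phase base point $x_0 = \beta_\bullet$, amplitude base point $\tilde x = y_\bullet$ for the ``$+$'' solution, and amplitude base point $\tilde y = \bar y_\bullet$ for the ``$-$'' solution. This substitution gives
$$
\mathcal W(u^+(x;\beta_\bullet,y_\bullet),u^-(x;\beta_\bullet,\bar y_\bullet))=4iw_\mathrm{even}^+(\bar y_\bullet,h;y_\bullet)
$$
with no further calculation required. So the nontrivial part of the lemma is the asymptotic claim.

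To obtain the asymptotics $w^+_\mathrm{even}(\bar y_\bullet,h;y_\bullet)=1+O(h)$ I would appeal to Remark \ref{h-asymptotics}, which guarantees the expansion provided $\bar y_\bullet$ lies in the set $\Omega_+$ associated to the amplitude base point $y_\bullet$, i.e., provided there exists a path from $y_\bullet$ to $\bar y_\bullet$ along which $t\mapsto \re z(t;\beta_\bullet)$ is strictly increasing and which does not cross any branch cut. I would exhibit such a path explicitly: namely, a straight vertical segment going from $y_\bullet$ down through $\re y_\bullet$ on the real axis and continuing to $\bar y_\bullet$. Because $y_l$ was chosen with $\re \alpha_l < \re y_l < \re \beta_l$ (and similarly $\re \beta_r < \re y_r < \re \alpha_r$), this segment lies entirely in the region ``near the lobe'' between two consecutive real turning points, which is free of branch cuts by the placement described in \S\ref{ss:stokesgeometry}. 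Along this vertical segment, formula \eqref{eq:Taylor1} applied at a real point $x_0$ of the segment shows that $\re z(t;\beta_\bullet)$ is strictly increasing as $\im t$ decreases, since $V(x_0)^2-\mu^2$ is real and positive up to $O(\ve)$ perturbations for $\mu\in B_\ve(\mu_0)$.

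The main (minor) obstacle will be bookkeeping for the two cases $\bullet=l$ and $\bullet=r$ simultaneously, since although the Stokes geometry around $\beta_l$ and $\beta_r$ is analogous, one should verify that the vertical descent indeed remains on the principal sheet in both cases. This is straightforward from Figure \ref{fig:stokes_config_double}: the branch cuts emanate upward from $\beta_l$ and downward from $\beta_r$ (respectively from the other turning points), so a vertical segment across the real axis from $y_\bullet$ to $\bar y_\bullet$ encounters no branch cut. Once this is checked, Remark \ref{h-asymptotics} yields $w^+_\mathrm{even}(\bar y_\bullet,h;y_\bullet)=1+O(h)$ uniformly for $\mu\in B_\ve(\mu_0)$, completing the proof.
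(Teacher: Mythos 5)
Your proof is correct and mirrors the paper's own (very brief) argument: apply \eqref{Wronskian1+} and then produce a path from $y_\bullet$ to $\bar y_\bullet$ along which $\re z$ is strictly increasing, so that Remark~\ref{h-asymptotics} gives $w^+_\mathrm{even}(\bar y_\bullet;y_\bullet)=1+O(h)$. Only note that the cut placement in \S\ref{ss:stokesgeometry} is the opposite of what you wrote (argument $5\pi/3$, i.e.\ into the lower half-plane, at $\beta_l$, and $2\pi/3$, i.e.\ into the upper half-plane, at $\beta_r$), though this does not affect your conclusion that a vertical descent between consecutive real turning points avoids the cuts.
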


\begin{proof}
According to the behavior of $\re z(x)$, 
there is a curve from $y_\bullet$ to $\bar y_\bullet$ along which $\re z(x)$ is strictly increasing. 
By evaluating the Wronskian at $\bar y_\bullet$ (see \eqref{Wronskian1+}) 
we get 
$$
\mathcal W(u^+(x;\beta_\bullet,y_\bullet),u^-(x;\beta_\bullet,\bar y_\bullet))=4iw^+_\mathrm{even}(\bar y_\bullet;y_\bullet),\quad \bullet=l,r,
$$
where $w^+_\mathrm{even}(\bar y_\bullet;y_\bullet)=1+O(h)$ by Remark \ref{h-asymptotics}.
\end{proof}

\begin{lem}\label{appendix:Wronskians2}
Let $u^+(x;\beta_\bullet,y_\bullet)$ and $u^-(x;\beta_\bullet,\bar y_\bullet)$, $\bullet=l,r$, be given by \eqref{exactWKB}.
Then in the case when $V(\beta_l)=\pm V(\beta_r)$ we have
\begin{itemize}
\item[$(\mathrm{i})$] $\mathcal W(u^+(x;\beta_l,y_l),u^-(x;\beta_r,\bar y_r))=4ie^{J/h}w_\mathrm{even}^+(\bar y_r;y_l)$,
\item[$(\mathrm{ii})$] $\mathcal W(u^+(x;\beta_l,y_l),u^+(x;\beta_r,y_r))=\pm i\cdot 4ie^{J/h}w_\mathrm{even}^+(\hat y_r;y_l)$,
\end{itemize}
where the amplitude functions appearing on the right are $1+O(h)$ as $h\to0$ for $\mu\in B_\ve(\mu_0)$.
\end{lem}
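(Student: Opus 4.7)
Both parts share the ingredient of moving the phase base point from $\beta_r$ to $\beta_l$. With the branch of the square root chosen in accordance with \eqref{actionintegralJ} so that $z(\beta_r;\beta_l)=J(\mu)$, the identity $z(x;\beta_r)=z(x;\beta_l)-J$ holds on the usual sheet; since the matrix factor $Q(z(x))$ and the amplitudes $w^\pm(x;\cdot)$ are insensitive to constant shifts in $z$, this translates to
\begin{equation*}
u^\pm(x;\beta_r,y)=e^{\mp J/h}u^\pm(x;\beta_l,y),
\end{equation*}
and the same rewriting persists after passing to the hat sheet.

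Part (i) is then immediate: the shift turns $u^-(x;\beta_r,\bar y_r)$ into $e^{J/h}u^-(x;\beta_l,\bar y_r)$, so both factors of the Wronskian share the phase base point $\beta_l$ and formula \eqref{Wronskian1+} yields $4ie^{J/h}w_\mathrm{even}^+(\bar y_r;y_l)$. The asymptotic $w_\mathrm{even}^+=1+O(h)$ follows from Remark \ref{h-asymptotics}, since the Stokes geometry of Figure \ref{fig:stokes_config_double} provides a path from $y_l$ to $\bar y_r$ along which $\re z$ is strictly increasing (for instance, one descending through the middle region between $\beta_l$ and $\beta_r$ and then through the right lobe).

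Part (ii) is more delicate, because the same-type Wronskian formula \eqref{Wronskian2} would produce $w_\mathrm{odd}^+$ rather than $w_\mathrm{even}^+$. My plan is to apply Lemma \ref{rewritinglemma} to convert $u^+(x;\beta_r,y_r)$ into a $u^-$-type solution on the hat sheet, producing the identity $u^+(x;\beta_r,y_r)=\mp iu^-(\hat x;\beta_r,\hat y_r)$; the sign is $-i$ in case $1^\circ$ (where $\beta_r$ is a zero of $V-\mu$) and $+i$ in case $2^\circ$ (where $\beta_r$ is a zero of $V+\mu$ and the roles of $\hat x$ and $\check x$ are exchanged upon rotation around $\beta_r$, see the discussion preceding Lemma \ref{rewritinglemma}). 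Combining with the hat-sheet phase shift gives $u^+(x;\beta_r,y_r)=\mp ie^{J/h}u^-(\hat x;\beta_l,\hat y_r)$, so the Wronskian reduces to $\mp ie^{J/h}$ times a Wronskian involving $u^+(x;\beta_l,y_l)$ and $u^-(\hat x;\beta_l,\hat y_r)$. This latter quantity is to be interpreted as the Wronskian of two solutions on the Riemann surface of \S\ref{ss:riemann}, and its evaluation at the sheet-point $\hat y_r$---where $w^-(\hat y_r;\hat y_r)=(1,0)^T$---together with the monodromy transformation $H(z(\hat x))=-iH(z(x))$ (stemming from the $1/4$-power in \eqref{defQ}) yields $4iw_\mathrm{even}^+(\hat y_r;y_l)$ after a direct determinantal computation. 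The amplitude has the asserted asymptotic by Remark \ref{h-asymptotics} applied to a path on the Riemann surface from $y_l$ to $\hat y_r$ crossing an inner branch cut.

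The principal obstacle is the careful interpretation and evaluation of the Wronskian $\mathcal W(u^+(x;\beta_l,y_l),u^-(\hat x;\beta_l,\hat y_r))$ in part (ii): the second factor involves the analytic continuation to the hat sheet, so identifying this quantity with the constant value $4iw_\mathrm{even}^+(\hat y_r;y_l)$ requires careful tracking of the monodromy of the functions $z$ and $H$, together with a correct accounting of the rotation direction in Lemma \ref{rewritinglemma} for each of the configurations $V(\beta_l)=\pm V(\beta_r)$ to reproduce the $\pm i$ sign.
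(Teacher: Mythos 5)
Your plan for part (i) is essentially the paper's argument: normalize both factors to a common phase base point (you shift $\beta_r\to\beta_l$ on the second factor, the paper shifts $\beta_l\to\beta_r$ on the first; the two bookkeepings are equivalent and give the same value $4ie^{J/h}w^+_\mathrm{even}(\bar y_r;y_l)$), then apply \eqref{Wronskian1+} and find a $\re z$-increasing path from $y_l$ to $\bar y_r$ for the asymptotics. No objection there.

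Part (ii) contains a genuine error. You claim that Lemma \ref{rewritinglemma} produces $u^{+}(x;\beta_r,y_r)=\mp i\,u^{-}(\hat x;\beta_r,\hat y_r)$, with the sign flipping to $+i$ in case $2^\circ$ because ``the roles of $\hat x$ and $\check x$ are exchanged upon rotation around $\beta_r$''. That is a misreading: Lemma \ref{rewritinglemma} is phrased intrinsically in terms of the fixed sheets $\hat x$ and $\check x$ of Definition \ref{def:sheets}, and the identity $u^{+}(x;x_0,y)=-i\,u^{-}(\hat x;x_0,\hat y)=i\,u^{-}(\check x;x_0,\check y)$ holds for \emph{every} listed turning point $x_0$, including $\beta_r$ in case $2^\circ$. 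The remark preceding the lemma only says that the \emph{direction of rotation} reaching a given sheet depends on whether the turning point is a zero of $V-\mu$ or of $V+\mu$; it does not alter the coefficients in the lemma.

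What actually distinguishes the two cases is which sheet is reached when $u^{-}$ is continued from the right lobe through the branch cut at $\beta_r$ into the middle Stokes region: in case $1^\circ$ one lands on the $\check x$-sheet and in case $2^\circ$ on the $\hat x$-sheet. The paper therefore uses the $\check x$-form of Lemma \ref{rewritinglemma} in case $1^\circ$ (coefficient $+i$) and the $\hat x$-form in case $2^\circ$ (coefficient $-i$); this is the source of the $\pm i$ in the statement, and it is exactly the geometric choice that makes the subsequent evaluation of the Wronskian at $\check y_r$ (resp.\ $\hat y_r$) via \eqref{Wronskian1+} legitimate, since then the $\re z$-increasing path from $y_l$ passes only through the intended branch cut. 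By always working on the $\hat x$-sheet you either contradict the lemma (as written you do, since you flip its sign), or---if you keep the lemma's true sign $-i$---you must account for the extra $-1$ coming from the continuation path to $\hat y_r$ in case $1^\circ$, which does not simply cross the cut at $\beta_r$. Your ``direct determinantal computation'' does not address this, and as stated your final sign is $\mp i$, i.e.\ opposite to the asserted $\pm i$. So the core identity you still need is precisely the paper's case-dependent choice of $\check x$ versus $\hat x$, together with the observation $z(\hat x)=z(\check x)$ that unifies the two amplitude functions at the end.
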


\begin{proof}
We start with the proof of (i) and note that the phase base points of $u^+(x;\beta_l,y_l)$ and $u^-(x;\beta_r,\bar y_r)$ differ. We therefore rewrite $u^+(x;\beta_l,y_l)$ as 
\begin{equation}\label{rewriteu_1}
u^+(x;\beta_l,y_l)=e^{J/h}u^+(x;\beta_r,y_l),
\end{equation}
where we have used the fact that
\begin{equation*}
\exp\bigg(i\int_{\beta_l}^{\beta_r}\sqrt{V(t)^2-\mu^2}\de t/h\bigg)=e^{J/h}.
\end{equation*}
This identity is straightforward to check; in fact it can be established using the proof of \cite[Lemma 5.5]{fujiie2018quantization} with obvious modifications. 
Since we can find a curve from $y_l$ to $\bar y_r$ along which $\re z(x)$ is strictly increasing we can evaluate the Wronskian at $\bar y_r$ (see \eqref{Wronskian1+}) which gives (i), with $w^+_\mathrm{even}(\bar y_r;y_l)=1+O(h)$ by Remark \ref{h-asymptotics}.

We now prove (ii).
By Lemma \ref{rewritinglemma} we have $u^+(x;\beta_r,y_r)=iu^-(\check x;\beta_r,\check y_r)$ for $\check x$ near $\check y_r$ in case $1^\circ$ and $u^+(x;\beta_r,y_r)=-iu^-(\hat x;\beta_r,\hat y_r)$ for $\hat x$ near $\hat y_r$ in case $2^\circ$. 
In each case, take the function on the right and continue it through the branch cut starting at $\beta_r$ into the domain in the usual sheet containing $y_l$. Note that at $y_l$ these functions take the values $iu^-(y_l;\beta_r,\check y_r)$ and $-iu^-(y_l;\beta_r,\hat y_r)$, respectively. 
Using \eqref{rewriteu_1} and evaluating the Wronskian at $\hat y_r$ (see \eqref{Wronskian1+}) gives
$$
\mathcal W(u^+(x;\beta_l,y_l),u^+(x;\beta_r,y_r))= 4ie^{J/h}\begin{cases}i w_\mathrm{even}^+(\check y_r;y_l)&\text{in case $1^\circ$},\\
-iw_\mathrm{even}^+(\hat y_r;y_l)&\text{in case $2^\circ$}.\end{cases}
$$
Since $z(\hat x)=z(\check x)$ the two amplitude functions on the right coincide. Also, since $\re z(\check x)$ ($\re z(\hat x)$) is a strictly increasing function of $\im\check x$ ($\im \hat x$) near $\check x=\check y_r$ ($\hat x=\hat y_r$), we can find a curve from $y_l$ to $\check y_r$ ($\hat y_r$), passing through the branch cut at $\beta_r$, along which $t\mapsto \re z(t)$ is strictly increasing. Hence, $w_\mathrm{even}^+(\check y_r;y_l)=1+O(h)$ as by Remark \ref{h-asymptotics}, which proves (ii).
\end{proof}

\begin{cor}\label{cor:Wronskians}
Let $u_1,u_2,u_3,u_4$ be the WKB solutions defined in \eqref{u1u2}--\eqref{u3u4}. Then in the case when $V(\beta_l)=\pm V(\beta_r)$ we have
\begin{itemize}
\item[$(\mathrm{i})$] $\mathcal W(u_1,u_3)=- 4ie^{J/h}\tau_l\tau_rw_\mathrm{even}^+(\bar y_r;y_l)$,
\item[$(\mathrm{ii})$] $\mathcal W(u_1,u_4)=4ie^{J/h}\tau_l\tau_r^\myconj w_\mathrm{even}^+(\check y_l;y_r)$,
\item[$(\mathrm{iii})$] $\mathcal W(u_3,u_2)=-4ie^{J/h}\tau_l^\myconj\tau_rw_\mathrm{even}^+(\bar y_r;\hat{\bar{y}}_l)$,
\item[$(\mathrm{iv})$] $\mathcal W(u_4,u_2)=4ie^{J/h}\tau_l^\myconj\tau_r^\myconj w_\mathrm{even}^+(\bar y_l;y_r)$,
\item[$(\mathrm{v})$] $\mathcal W(u_1,u_2)=-i\cdot 4i\tau_l\tau_l^\myconj w_\mathrm{even}^+(\bar y_l;y_l)$,
\item[$(\mathrm{vi})$] $\mathcal W(u_3,u_4)=\pm i\cdot 4i\tau_r\tau_r^\myconj w_\mathrm{even}^+(\bar y_r;y_r)$,
\end{itemize}
where all amplitude functions appearing on the right are $1+O(h)$ as $h\to0$ for $\mu\in B_\ve(\mu_0)$.
\end{cor}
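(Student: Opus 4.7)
The approach is to reduce each Wronskian to one already handled by Lemma \ref{appendix:Wronskians1} or Lemma \ref{appendix:Wronskians2}. First I would substitute the definitions \eqref{u1u2}--\eqref{u3u4} of $u_1,u_2,u_3,u_4$ into each Wronskian in order to pull out the constant prefactors $i\tau_l$, $-\tau_l^\myconj$, $i\tau_r$, $\mp\tau_r^\myconj$, leaving Wronskians between the bare exact WKB solutions $u^\pm(x;\beta_\bullet,\cdot)$. The assertion that each amplitude function on the right-hand side is $1+O(h)$ will in every case follow from Remark \ref{h-asymptotics} together with the Stokes-geometry discussion in \S\ref{ss:stokesgeometry} (cf.\ Figure \ref{fig:stokes_config_double}), once one verifies that a curve of strictly increasing $\re z(x)$ connects the two relevant base points.

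For items (v) and (vi) both factors share a common phase base point ($\beta_l$ for (v), $\beta_r$ for (vi)) and are of opposite types, so Lemma \ref{appendix:Wronskians1} applies directly. The extra factor $-i$ in (v) comes from $(i)(-1) = -i$, and the factor $\pm i$ in (vi) from $(i)(\mp 1)$. Items (i) and (ii) then reduce to Lemma \ref{appendix:Wronskians2}(i) and (ii) respectively. In (ii) the sign $\mp$ from the prefactor of $u_4$ combines with $\pm i$ from Lemma \ref{appendix:Wronskians2}(ii) to give the sign-free constant $4ie^{J/h}$. The amplitude argument $\check y_l$ instead of $\hat y_r$ reflects a different evaluation of the same constant Wronskian; I would verify equivalence by an alternative rewriting, starting from $u^+(x;\beta_l,y_l)=iu^-(\check x;\beta_l,\check y_l)$ of Lemma \ref{rewritinglemma} and then using the phase-base-point change $u^+(x;\beta_r,y_r)=e^{J/h}u^+(x;\beta_l,y_r)$ to reduce to \eqref{Wronskian1+} applied at $\beta_l$.

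Items (iii) and (iv) are the mirror images of (i) and (ii) with the roles of $+$ and $-$ interchanged. Concretely, after pulling out prefactors, (iii) becomes $-i\tau_r\tau_l^\myconj\,\mathcal W(u^-(x;\beta_r,\bar y_r),u^-(x;\beta_l,\bar y_l))$; using the shift $u^-(x;\beta_l,\bar y_l)=e^{-J/h}u^-(x;\beta_r,\bar y_l)$ produces a common phase base point, and Lemma \ref{rewritinglemma} then rewrites one of the $u^-$-factors as $u^+$ on the $\hat x$- or $\check x$-sheet, reducing the calculation to \eqref{Wronskian1+} precisely as in the proof of Lemma \ref{appendix:Wronskians2}(ii). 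Item (iv) is handled in the same fashion but starting from $\mathcal W(u^-(x;\beta_r,\bar y_r),u^+(x;\beta_l,y_l))$ after a similar rewriting, which is a mixed-type Wronskian with a shared phase base point after the $e^{\pm J/h}$ shift.

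The main obstacle is the bookkeeping of branch cuts and the case distinction between $V(\beta_l)=V(\beta_r)$ and $V(\beta_l)=-V(\beta_r)$: the identities in Lemma \ref{rewritinglemma} carry case-dependent $\hat{}$ or $\check{}$ markers depending on whether one rotates around a root of $V-\mu$ or $V+\mu$, and these must be tracked carefully through every rewriting to produce the specific amplitude base point ($\bar y_r$, $\check y_l$, $\hat{\bar y}_l$, etc.) asserted in each line of the corollary. Apart from this bookkeeping, every step is mechanical once the two auxiliary lemmas of the appendix are in hand.
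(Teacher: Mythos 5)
Your proposal is essentially correct, but it takes a different route from the paper for items (iii) and (iv). The paper derives these by symmetry rather than by a second rewriting argument: since $u_1^\ast=iu_2$ and $u_3^\ast=iu_4$, one has $\mathcal W(u_4,u_2)=(\mathcal W(u_1,u_3))^\ast$ and $\mathcal W(u_3,u_2)=(\mathcal W(u_1,u_4))^\ast$, and then one only has to check how the amplitude constants transform under $\ast$. Using \eqref{eq:symamplitude} and the equivalence of the two evaluations \eqref{Wronskian1+}--\eqref{Wronskian1-} of the same Wronskian, the paper shows $(w^+_\mathrm{even}(\bar y_r;y_l,\mu))^\ast=w^-_\mathrm{even}(y_r;\bar y_l,\mu)=w^+_\mathrm{even}(\bar y_l;y_r,\mu)$, which together with $J^\ast=J$, $\tau_\bullet^\ast=\tau_\bullet^\myconj$ and $\bar{\check y}=\hat{\bar y}$ gives (iv) from (i) and (iii) from (ii) essentially for free. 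Your approach of performing a parallel direct computation for (iii) and (iv) would also succeed, but you would repeat the delicate branch-cut and sheet bookkeeping that you already had to do for (ii), and then still need to certify that the resulting amplitude arguments match the specific ones stated in the corollary; the symmetry route avoids this entirely and makes the appearance of $\hat{\bar y}_l$ in (iii) transparent. To your credit, you correctly flag that item (ii) is not quite a verbatim restatement of Lemma \ref{appendix:Wronskians2}(ii): that lemma gives $w^+_\mathrm{even}(\hat y_r;y_l)$, and equating this to $w^+_\mathrm{even}(\check y_l;y_r)$ requires re-evaluating the same constant Wronskian after an alternative rewriting (rotating around $\beta_l$ instead of $\beta_r$), which is exactly what you propose; this is a genuine step that the paper glosses over as an ``immediate consequence.'' One small sign slip: in (vi) the overall factor $\pm i$ does not come from $(i)(\mp 1)$, which is $\mp i$; you also need the antisymmetry $\mathcal W(u^-,u^+)=-\mathcal W(u^+,u^-)$, so the factor is $(i)(\mp 1)(-1)=\pm i$.
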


\begin{proof}
$(\mathrm{i})$-$(\mathrm{ii})$ and $(\mathrm{v})$-$(\mathrm{vi})$ are immediate consequences of Lemmas \ref{appendix:Wronskians1} and \ref{appendix:Wronskians2} together with the definitions \eqref{u1u2}--\eqref{u3u4} of $u_1,u_2,u_3,u_4$.

Now note that $\mathcal W(u_4,u_2)=\mathcal W(-iu_3^\ast,-iu_1^\ast)=(\mathcal W(u_1,u_3))^\ast$. Let $c=c(\mu)$ be the constant $c(\mu)=w_\mathrm{even}^+(\bar y_r;y_l,\mu)$. Since $y_\bullet$ is independent of $\mu$ by construction (see Section \ref{section:double-lobe}) we have
$$
c^\ast(\mu)=\overline{c(\bar\mu)}=\overline{w_\mathrm{even}^+(\bar y_r;y_l,\bar \mu)}.
$$
By \eqref{eq:symamplitude} we then get
$$
c^\ast(\mu)=w_\mathrm{even}^-( y_r;\bar y_l,\mu)=w_\mathrm{even}^+(\bar y_l; y_r,\mu),
$$
where the last identity follows by inspecting the Wronskian formulas \eqref{Wronskian1+}--\eqref{Wronskian1-}. In view of $(\mathrm{i})$ this proves $(\mathrm{iv})$ since $J^\ast=J$. Since $\mathcal W(u_3,u_2)=(\mathcal W(u_1,u_4))^\ast$ and $\bar{\check{y}}=\hat{\bar{y}}$, it is easy to check that $(\mathrm{iii})$ follows from $(\mathrm{ii})$ in a similar manner.
\end{proof}

\section*{Acknowledgements}
We wish to express our gratitude to Setsuro Fujii{\'e} for many helpful discussions and valuable suggestions. We also thank the referees for helpful comments.
Jens Wittsten was supported by the Swedish Research Council grants 2015-03780 and 2019-04878.

\bibliographystyle{amsplain}
\bibliography{references}

\end{document}